\documentclass[english,a4paper,10pt,reqno,oneside]{amsart}
\usepackage{amsmath}
\usepackage{amsfonts}
\usepackage{accents}
\usepackage{mathtools}
\usepackage{mathrsfs}
\usepackage{url}
\usepackage{geometry}
\usepackage{amsthm}
\usepackage{microtype}
\usepackage{amssymb}
\usepackage[english]{babel}
\usepackage[utf8]{inputenc}
\usepackage[T1]{fontenc}
\usepackage{lmodern}
\usepackage{csquotes}
\usepackage{enumitem}
\usepackage{esint}
\usepackage{bbm}
\usepackage[hidelinks]{hyperref}
\usepackage{verbatim}
\usepackage[backend=biber,style=numeric,sortcites=true,isbn=false,url=false,doi=false,maxnames=5]{biblatex}
\usepackage{subcaption}
\usepackage{caption}
\usepackage{float}

\numberwithin{equation}{section}

\AtEveryBibitem{
    \clearfield{note}
    \clearfield{issn}
    \clearfield{language}
    \clearlist{language}
}

\theoremstyle{plain} 
\newtheorem{thm}{Theorem}[section]

\theoremstyle{definition} 
\newtheorem{defi}[thm]{Definition} 

\theoremstyle{plain} 
\newtheorem{lemma}[thm]{Lemma} 

\theoremstyle{plain}

\theoremstyle{remark} 
\newtheorem{rem}[thm]{Remark}

\theoremstyle{plain}
\newtheorem{cor}[thm]{Corollary} 

\theoremstyle{definition} 
\newtheorem{ex}[thm]{Example}

\theoremstyle{remark}

\newcommand{\R}{\mathbb{R}}
\newcommand{\N}{\mathbb{N}}

\newcommand{\Tail}[3]{\Tailoperator(#1;#2;#3)}
\newcommand{\tail}[3]{\tailoperator(#1;#2;#3)}
\renewcommand{\L}{\mathcal{L}}

\newcommand{\loc}{\textup{loc}}

\newcommand{\indicator}{\mathbbm{1}}

\renewcommand{\d}{\mathrm{d}}
\newcommand{\dx}{\d x}
\newcommand{\dy}{\d y}
\newcommand{\dz}{\d z}

\newcommand{\F}{\mathcal{F}}
\newcommand{\B}{\mathcal{B}}
\newcommand{\M}{\mathcal{M}}
\newcommand{\C}{\mathbf{C}}

\newcommand{\bigset}[2]{\left\{#1\:\middle\vert\: #2\right\}}
\newcommand{\Rd}{\R^d}
\newcommand{\intd}{\int_{\Rd}}
\newcommand{\pv}{\text{p.v.}}

\renewcommand{\and}{\quad\text{and}\quad}
\renewcommand{\div}{\operatorname{div}}

\DeclareMathOperator{\dist}{dist}
\DeclareMathOperator{\Tailoperator}{Tail}
\DeclareMathOperator{\tailoperator}{tail}

\DeclareMathOperator*{\osc}{osc}

\begin{document}

\raggedbottom

\title[Hölder regularity for nonlocal equations]{Hölder regularity for nonlocal equations governed by measures and non-standard growth}

\author{Luke Schleef}
\address{Fakultät für Mathematik\\Universität Bielefeld\\Universitätsstraße 25\\33615 Bielefeld\\Germany}
\email{luke.schleef@uni-bielefeld.de}

\begin{abstract}
    On doubling metric measure spaces, we study nonlocal operators with non-standard $(p,q)$-Orlicz growth ($1<p\leq q<\infty$), where the interaction kernel is given by a general, non-translation-invariant measure. Under natural assumptions on the interaction measure --- namely symmetry, a suitable nonlocal Poincar\'{e} inequality, and a tail bound --- we prove that every weak solution to the corresponding homogeneous nonlocal equation admits a locally Hölder continuous representative. These regularity results are new even in the Euclidean setting.
\end{abstract}

\thanks{I would like to thank my advisor Moritz Kassmann for suggesting this problem and for interesting discussions.}
\subjclass[2020]{45K05, 35B45, 35D30, 46E36}
\keywords{Nonlocal operator, non-standard growth, Hölder regularity, doubling metric measure space}

\maketitle

\section{Introduction}

In this article, we study nonlocal equations of the form $\L u=0$, where the operator $\L$ is formally given by
\begin{align}\label{eqIntro:Operator}
    \L u(x)=\pv\int_X h\left(x,y,\frac{u(x)-u(y)}{d(x,y)^s}\right)\frac{\nu(x,\dy)}{d(x,y)^s}.
\end{align}
Here, $(X,d,\mu)$ is a doubling metric measure space, $s\in(0,1)$ is the differential order of the operator, and $h(x,y,t)\frac{t}{|t|}=-h(y,x,-t)\frac{t}{|t|}\asymp f(|t|)$ is comparable to the derivative $f$ of a Young function with $(p,q)$-growth. In particular, for any $t\geq 0$, it holds that
\begin{align*}
    1<p\leq\frac{f(t)t}{F(t)}\leq q<\infty, \quad\text{where}\quad F(t)=\int_0^t f(\tau)\d\tau.
\end{align*}
Typical examples include
\begin{align*}
    h(x,y,t)=a(x,y)|t|^{p-2}t+b(x,y)|t|^{q-2}t \and h(x,y,t)=a(x,y)|t|^{p-2}t\log(1+|t|)
\end{align*}
for some symmetric, uniformly bounded, measurable functions $a,b\asymp 1$ and some $1<p<q<\infty$. The interaction kernel of the operator $\L$ is given by a family of measures $\{\nu(x,\cdot)\}_{x\in X}$ that may be singular with respect to $\mu$ and shall satisfy the following natural assumptions. First, $\nu$ should be symmetric in the sense that, whenever $A,B\subseteq X$ are two separated measurable sets, the identity
\begin{align*}
    \int_A\int_B\nu(x,\dy)\mu(\dx)=\int_B\int_A\nu(x,\dy)\mu(\dx)
\end{align*}
holds true. Second, we assume that $\nu$ satisfies the nonlocal Poincar\'{e} inequality
\begin{align}\label{eqIntro:PoincareAssumption}
    \int_B|u-(u)_B|\d\mu\lesssim\int_B\int_B|u(x)-u(y)|\nu(x,\dy)\mu(\dx)
\end{align}
for any ball $B$ in $X$ and any measurable function $u\colon B\to\R$. The last assumption is an upper bound on the tail of the measure $\nu(x,\cdot)$, i.e.
\begin{align}\label{eqIntro:TailAssumption}
    \int_{X\setminus B_R(x)}\frac{\nu(x,\dy)}{d(x,y)^s}\lesssim R^{-s}
\end{align}
for any $R>0$ and $x\in X$. Specifically, let us emphasize that our assumptions on $\nu$ are completely independent of the growth function and its indices $p$ and $q$. In fact, we will give more general assumptions in Section \ref{sec:Assumptions} that do involve the lower index $p$, but not the exact structure of the growth function. For the sake of accessible presentation, we stick with the above assumptions during the introduction. The measure $\nu(x,\dy)=|x-y|^{-d}\dy$ on $\Rd$ is the kernel of the fractional $p$-Laplace operator $(-\Delta_p)^s$ and satisfies the above assumptions. However, our results apply to very general nonlocal operators.

\subsection*{Main Result}
Our main result is about a priori local Hölder continuity of weak solutions to the equation $\L u=0$ with $\L$ as in \eqref{eqIntro:Operator}.

\begin{thm}\label{thmIntro:Hoelder}
    Let $u\in L_\loc^\infty(\Omega)$ be a locally bounded local weak solution to the equation $\L u=0$ in some open set $\Omega$ in $X$. Then, under the above assumptions, $u$ is locally $\alpha$-Hölder continuous in $\Omega$ for some $\alpha\in(0,1)$ and moreover, for any ball $B_R(x_0)\Subset\Omega$, the Hölder estimate
    \begin{align}\label{eqIntro:HoelderEstimate}
        [u]_{C^{0,\alpha}(B_{R/4}(x_0))}\leq\frac{C}{R^\alpha}\left(\|u\|_{L^\infty(B_R(x_0))}+\Tail{u}{x_0}{R}\right)
    \end{align}
    holds true for some $C>0$. Here, the constants $\alpha$ and $C$ only depend on $s$, on the doubling constant of $\mu$, on the indices $p$ and $q$, on the constants in the comparability assumption $h(x,y,t)\frac{t}{|t|}\asymp f(|t|)$, and on the constants in the assumptions \eqref{eqIntro:PoincareAssumption} and \eqref{eqIntro:TailAssumption}.
\end{thm}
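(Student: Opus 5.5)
We follow the De Giorgi--Nash--Moser strategy adapted to nonlocal operators (in the spirit of Di Castro--Kuusi--Palatucci and its Orlicz/metric-measure extensions), with the growth function $F$ replacing the power $t^p$ throughout. Throughout we work with the $s$-scaled differences $\frac{u(x)-u(y)}{d(x,y)^s}$. The natural energy is
\begin{align*}
    \int\int F\left(\frac{|u(x)-u(y)|}{d(x,y)^s}\right)\nu(x,\dy)\mu(\dx),
\end{align*}
and the $(p,q)$-growth gives the two-sided scaling $\min\{\lambda^p,\lambda^q\}F(t)\lesssim F(\lambda t)\lesssim\max\{\lambda^p,\lambda^q\}F(t)$. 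We use this scaling repeatedly to pull constants out of $F$.

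\textbf{Caccioppoli inequality.} Testing the weak formulation with $\eta^q(u-k)_\pm$, with $\eta$ a Lipschitz cutoff, and using the symmetry of $\nu$, we get an estimate of the following form: the local Orlicz energy of $(u-k)_\pm$ on $B_r$ is bounded by three terms.
\begin{itemize}
    \item The energy of $(u-k)_\pm$ weighted by $|\eta(x)-\eta(y)|$ on $B_R$. Here the Lipschitz bound $|\eta(x)-\eta(y)|\lesssim d(x,y)/(R-r)$ together with the local part of $\nu$ controls the cutoff term.
    \item A tail term of the form $\int_{B_R}(u-k)_\pm(x)\int_{X\setminus B_R} f\bigl((u-k)_\pm(y)/d^s\bigr)\frac{\nu(x,\dy)}{d^s}\,\mu(\dx)$. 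This is handled by \eqref{eqIntro:TailAssumption}. It is also what dictates the definition of $\Tail{u}{x_0}{R}$ as an $f^{-1}$-type average against $\nu/d^s$ outside $B_R$.
    \item In addition, the same test yields a good nonlocal term $\int_{B_r}(u-k)_\mp\,\d\mu\int f((u-k)_\pm/\cdot)\ldots$, which is needed for the De Giorgi lemma below.
\end{itemize}
The non-homogeneity of $F$ is absorbed by applying Young's inequality in the form $f(a)b\le \varepsilon F(a)+C_\varepsilon F(b)$, which is valid with constants depending only on $p,q$.

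\textbf{Poincaré and Sobolev-type gain.} The assumption \eqref{eqIntro:PoincareAssumption} is only an $L^1$ Poincaré inequality with no gain of integrability. We therefore upgrade it. Applying it to $F(|v|)$-type compositions, or to $|v|^p$ via truncations $\min\{(v-k)_+,\ell\}$, together with the doubling of $\mu$ and a Jensen argument, we get an Orlicz Poincaré inequality. We then obtain the measure-decay ingredient directly rather than through Sobolev embedding: a level-set estimate for $(u-k)_+$ on $\{u>k\}$ versus $\{u<h\}$. This follows from \eqref{eqIntro:PoincareAssumption} applied to $\min\{(u-h)_+,k-h\}$, which gives
\begin{align*}
    (k-h)\,|\{u\ge k\}\cap B|\,|\{u\le h\}\cap B| \lesssim \mu(B)\int_B\int_B |w(x)-w(y)|\,\nu\,\mu.
\end{align*}
Hölder's inequality then bounds the right-hand side by the Orlicz energy via $F^{-1}$.

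\textbf{Local boundedness and oscillation decay.} With the Caccioppoli inequality and the level-set estimate in hand, we prove two lemmas.
\begin{enumerate}
    \item A De Giorgi lemma: if $|\{u\le m+\omega/2\}\cap B_{2r}|\le \delta|B_{2r}|$ and the tail is small compared to $\omega$, then $u\ge m+\omega/4$ on $B_r$. This is proved by iterating over levels $k_j$ and radii $r_j$. Because $F$ is not a power, the iteration is run on $F(\omega)$-normalized quantities. The resulting recursion is $Y_{j+1}\le Cb^jY_j^{1+\beta}$, where $\beta>0$ comes from the doubling dimension and the fractional order $s$; the gain is produced by the $\nu$-energy on $B\times B$.
    \item A shrinking lemma: if $u\ge m+\omega/2$ on a fixed portion of the ball, then the density of the sublevel set $\{u< m+\omega/2^{j}\}$ decays like $j^{-(1-1/p)}$. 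This follows from the level-set estimate summed over $j$, combined with the Caccioppoli bound, in which the good nonlocal term controls the energy by $F(\omega/2^j)$.
\end{enumerate}
Combining the two lemmas gives $\osc_{B_{\sigma r}}u\le(1-\gamma)\omega$ whenever the tail is at most $\omega$. Finally, the standard nonlocal induction on $r_j=\sigma^jR/4$, with $\omega_j=\lambda^j\omega_0$, proceeds as follows. The tail at step $j$ is estimated by splitting $X\setminus B_{r_j}$ into annuli $B_{r_i}\setminus B_{r_{i+1}}$, using the inductive bound on each annulus, the far-out $\Tail{u}{x_0}{R}$, and \eqref{eqIntro:TailAssumption}. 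Choosing $\lambda<1$ close enough to $1$ that the resulting geometric sum $\sum\lambda^{-i}\sigma^{si}$ converges yields \eqref{eqIntro:HoelderEstimate} with $\alpha=\log\lambda/\log\sigma$.

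\textbf{Main obstacle.} The hardest step is the De Giorgi/measure-shrinking argument under only the weak $L^1$ Poincaré inequality for a possibly singular $\nu$, with no Sobolev embedding available. Obtaining a power gain $Y^{1+\beta}$ seems to require the level-set inequality combined with the Caccioppoli good term. Keeping the constants independent of $\omega$ despite the $(p,q)$-inhomogeneity of $F$ is the delicate part. For this we normalize by $F(\omega/r^s)$ at each scale and track the $\min\{\lambda^p,\lambda^q\}$ versus $\max\{\lambda^p,\lambda^q\}$ factors so that they only ever enter through fixed ratios such as $2^{\pm q}$.
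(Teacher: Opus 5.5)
\textbf{Main gap: no integrability gain in your De Giorgi lemma.} You assert the recursion $Y_{j+1}\le Cb^jY_j^{1+\beta}$, but nothing you have set up produces the exponent $1+\beta$. Caccioppoli combined with a Poincar\'e inequality (Orlicz or not) applied to $(u-k_j)_-\eta_j^q$ only gives $Y_{j+1}\le Cb^jY_j$, because a Poincar\'e inequality carries no gain of integrability. Your level-set inequality does not help either: in the critical-mass regime $\mu(\{u\ge k\}\cap B)\approx\mu(B)$, so after H\"older it again reduces to a linear bound. The phrase ``the gain is produced by the $\nu$-energy on $B\times B$'' is not a mechanism, and you state explicitly that no Sobolev embedding is available.

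The missing idea, which is the technical core of the paper, is that the doubling property alone self-improves the Poincar\'e assumption into a Sobolev--Poincar\'e inequality
\[
\fint_B F^\theta\Bigl(\frac{|u-(u)_B|}{R^s}\Bigr)\d\mu\le C\Bigl(\fint_{\Lambda B}\int_{\Lambda B}F\Bigl(\frac{|u(x)-u(y)|}{d(x,y)^s}\Bigr)\nu(x,\dy)\mu(\dx)\Bigr)^\theta,\qquad 1\le\theta<\frac{Q}{Q-s}
\]
(Theorem \ref{thmSobPoi:SobolevPoincare}). The paper proves it in three steps. First, a weighted Jensen inequality on $\nu(x,\cdot)$ with weights $(d(x,y)/R)^{(s-t)p}$ (Lemma \ref{lemSobPoi:JensenNu}); these weights are integrable by \eqref{eqPre:TailIntegrabilityAssumption} even though $\nu(x,B)$ may be infinite. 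Second, a telescoping sum over the balls $B_{2^{1-i}R}(x)$, which gives the pointwise bound $F(|u(x)-(u)_B|/R^s)\lesssim R^{-s}\M^s g(x)$ for a local fractional maximal function of $g(z)=\int F(\ldots)\,\nu(z,\dy)$. Third, a weak-type estimate for $\M^s$ that uses only the relative mass bound of Lemma \ref{lemPre:LowerMassBound} (Lemma \ref{lemSobPoi:WeakTypeBoundFractionalMaximalFunction}). This is exactly where ``the doubling dimension and $s$'' enter, with $\beta=\theta-1$. Without it your first De Giorgi lemma does not close.

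\textbf{Secondary issues: the shrinking lemma and the final induction.} Nonlocally, the energy of $w=\min\{(u-h)_+,k-h\}$ is not concentrated on the layer $\{h<u<k\}$. Pairs in $\{u\le h\}\times\{u\ge k\}$ contribute $(k-h)\int_{\{u\le h\}\cap B}\nu(x,\{u\ge k\}\cap B)\,\mu(\dx)$. With dyadic levels, H\"older plus the Caccioppoli energy bound only controls this by $C(k-h)\mu(B)$, so the local telescoping argument giving the rate $j^{-(1-1/p)}$ breaks down. It can be rescued, but you would have to supply the argument. One option is to use the good term at level $k_j$ to show that cross pairs with $u(y)\ge Mk_j$ carry $\nu$-mass $O(M^{1-p}\mu(B))$, for $M$ large; the same $\nu$-integral appears on both sides, so no pointwise lower bound on $\nu$ is needed. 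The remaining pairs with $k_j\le u(y)<Mk_j$ are then handled by H\"older on that layer.

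The paper avoids this entirely. It proves a logarithmic estimate of fractional order $t$ with $p$-growth (Lemma \ref{lemEnergy:LogEstimate}) and applies \eqref{eqPre:PoincareAssumption} to $v=\bigl(\log\frac{(1+\delta)a}{u+\delta a}\bigr)_+$. This gives $\mu(\{u\le 2\delta a\}\cap B_{2r})\lesssim\frac{1}{\varepsilon\log(1/\delta)}\mu(B_{2r})$ (Lemma \ref{lemEoP:ExpansionOfPositivity}). Here the Poincar\'e inequality supplies the connectivity that a good term alone cannot: for singular $\nu$ (e.g.\ kernels supported on coordinate axes), two disjoint sets of positive measure need not interact at all.

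Finally, in your last step the smallness of the tail must come from choosing $\sigma$ small and only then fixing $\alpha$. The nearest annulus contributes a multiple of $\sigma^{sp}\lambda^{-(p-1)}$, where $\lambda$ is your factor in $\omega_j=\lambda^j\omega_0$, and this does not become small as $\lambda\to 1$.
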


While special cases of Theorem \ref{thmIntro:Hoelder} have been studied previously, our result unifies and substantially extends several separate branches of regularity theory. Existing works establish interior Hölder regularity for non-standard growth equations with standard fractional kernels, linear equations driven by general measures --- both mainly within the Euclidean setting --- and linear operators on metric measure spaces, see the discussion about related results below. Theorem \ref{thmIntro:Hoelder} provides the first result to simultaneously accommodate non-standard Orlicz growth, general non-translation-invariant interaction measures, and abstract doubling metric measure spaces.

This generality introduces two major technical hurdles. A primary obstacle is to formulate structural conditions on the interaction measure and the growth function that are decoupled, yet sufficiently compatible to recover a Poincar\'{e} inequality of type \eqref{eqIntro:PoincareAssumption} that incorporates the growth function $F$. Furthermore, the absence of standard Euclidean tools complicates the higher integrability estimates required for the De Giorgi iteration. We overcome this by establishing a Sobolev--Poincar\'{e} inequality relying solely on the doubling property of the underlying metric measure space --- a core technical contribution of this paper. In addition, we improve the known energy estimates for weak solutions as a by-product, which allow us to assume a very general nonlocal Poincar\'{e} inequality.

\medskip

Let us remark that the local boundedness assumption in Theorem \ref{thmIntro:Hoelder} cannot be removed. An a priori local boundedness result as well as Harnack's inequality fail in general for singular kernels, see \cite{BS05}.

As one expects for nonlocal operators, the Hölder estimate \eqref{eqIntro:HoelderEstimate} involves a tail term that describes the behavior of the solution outside of the domain. However, in our setting it is not clear how to define this term appropriately. Recall that for the fractional $p$-Laplacian $(-\Delta_p)^s$ on $\Rd$, the tail term is given by
\begin{align}\label{eqIntro:TailFractionalPLaplace}
    \Tail{u}{x_0}{R}=\left(R^{sp}\int_{\Rd\setminus B_R(x_0)}\frac{|u(y)|^{p-1}}{|y-x_0|^{d+sp}}\dy\right)^\frac{1}{p-1}
\end{align}
as introduced in \cite{DKP16}. The adaptation to metric measure spaces is straightforward. The challenging parts are to generalize the tail to Orlicz growth and to an abstract measure $\nu(x,\dy)$. Both parts have been done independently in \cite{CKW22} and \cite{Che25}, respectively. Note that when considering a measure $\nu(x,\dy)$, which may highly oscillate with respect to $x$, it is not sufficient to just take into account the center $x_0$ of the ball as in \eqref{eqIntro:TailFractionalPLaplace}. Hence, for $u\colon X\to\R$, we define its tail in $B_R(x_0)$ as
\begin{align}\label{eqIntro:Tail}
    \Tail{u}{x_0}{R}=R^sf^{-1}\left(R^s\sup_{x\in B_{R/2}(x_0)}\int_{X\setminus B_R(x_0)}f\left(\frac{|u(y)|}{d(x,y)^s}\right)\frac{\nu(x,\dy)}{d(x,y)^s}\right).
\end{align}
The fact that the supremum in \eqref{eqIntro:Tail} is taken over $x\in B_{R/2}(x_0)$ is a choice. One could take any portion of $B_R(x_0)$ as long as $x$ and $y$ stay separated by a uniform portion of $R$. In fact, we will see that all of these choices lead to the same class of functions having finite tails. The precise definitions of the corresponding function spaces and of a weak solution will be given below.

\subsection*{Strategy of the proof}
The proof of Hölder regularity follows the De Giorgi--Nash--Moser theory. First, we will use specific test functions to prove energy estimates for weak supersolutions that will be used in the iterations. Afterwards, we will perform a De Giorgi iteration to derive a critical mass lemma. In order for the iteration to converge, we need an improvement of integrability of solutions, which is usually given by Sobolev inequalities. In contrast to the Euclidean case, Sobolev inequalities themselves are not available in general, but they come hand in hand with Poincar\'{e} inequalities. We show that the doubling property of the metric measure space is sufficient to self-improve the assumption \eqref{eqIntro:PoincareAssumption} on the interaction measure in two ways. First, \eqref{eqIntro:PoincareAssumption} implies a Poincar\'{e} inequality that takes into account the growth function $F$, and second, this Poincar\'{e} inequality self-improves to a Sobolev--Poincar\'{e} inequality. The resulting integrability gain supplies the exponent needed in the De Giorgi iteration. The critical mass lemma itself will lead to an expansion of positivity result by invoking a logarithmic estimate and \eqref{eqIntro:PoincareAssumption} once more. Finally, we show that we are able to control the appearing tails in a suitable way so that the expansion of positivity result yields Hölder regularity in the usual way.

\subsection*{Organization of the paper}
Our work is organized as follows. First, we introduce notation and collect some preliminary results about metric measure spaces and growth functions in Section \ref{sec:Pre}, where we also define the notion of weak solutions to the equation $\L u=0$, the notion of tail terms and agree on the precise assumptions for our setting. In Section \ref{sec:SobPoi}, we prove the necessary Sobolev--Poincar\'{e} inequality. Section \ref{sec:Energy} is devoted to the usual energy estimates for weak supersolutions, that are, the Caccioppoli inequality and the logarithmic estimate. The main part of this article is Section \ref{sec:EoP}, where we prove an expansion of positivity result for weak supersolutions, which is the key ingredient to prove a decay of oscillation and then Hölder regularity, which will be done in Section \ref{sec:Hoelder}.

\subsection*{Background and related results}
Let us first comment on known results in the Euclidean case $X=\Rd$. The literature about local equations of non-standard growth of the form
\begin{align*}
    \div(g(x,u,Du))=0
\end{align*}
is very rich. Especially, $C_\loc^{0,\alpha}$- and $C_\loc^{1,\alpha}$-regularity of weak solutions has been studied under various assumptions on $g$ by several authors, including \cite{Gio57,LU68,GG82,Lie91,MN91,Mar89,AM01,CM15,Ok20}.

In recent years, the interest in the study of nonlocal equations with non-standard growth has grown. These equations are described by the operator
\begin{align}\label{eqIntro:NonlocalNonStandardGrowth}
    \pv\intd h\left(x,y,\frac{u(x)-u(y)}{|x-y|^s}\right)\frac{\dy}{|x-y|^{d+s}}.
\end{align}
If $h(x,y,t)=|t|^{p-2}t$, then \eqref{eqIntro:NonlocalNonStandardGrowth} reduces to the fractional $p$-Laplace operator $(-\Delta_p)^s$, which has been extensively studied, see e.g.\ \cite{Kas09,DKP16,DKP14,Coz17,Now21,BLS18,GL24,BT25,GJS25} for some results on interior regularity. More generally, when $h$ has the form $h(x,y,t)=f(|t|)\frac{t}{|t|}a(x,y)$ for some symmetric, measurable kernel 
\begin{align*}
    0<\lambda\leq a(x,y)\leq\Lambda<\infty,    
\end{align*}
then \eqref{eqIntro:NonlocalNonStandardGrowth} boils down to general growth equations 
\begin{align*}
    \pv\intd f\left(\frac{|u(x)-u(y)|}{|x-y|^s}\right)\frac{u(x)-u(y)}{|u(x)-u(y)|}\frac{a(x,y)}{|x-y|^{d+s}}\dy=0.
\end{align*}
The regularity properties of those operators have been investigated in \cite{FSV22,BKO23,CKW22}. For general non-standard growth equations, where $h$ satisfies a comparability assumption as in this article, Hölder estimates and Harnack inequalities for non-standard growth equations were proven in \cite{CKW22,CKW23}. In addition, Hölder regularity for double phase ($h(x,y,t)=|t|^{p-2}t+a(x,y)|t|^{q-2}t$) and variable exponent equations ($h(x,y,t)=|t|^{p(x,y)-2}t$) is established in \cite{BOS22,DP19} and \cite{CK23,Ok23}, respectively. We emphasize that these two models are not covered in this article.

On the other hand, there has been considerable interest in investigating the preservation of Hölder regularity if the interaction kernel is not a density, but a possibly singular measure. In the work of \cite{DK20}, the authors considered linear equations of the form
\begin{align*}
    \pv\intd (u(x)-u(y))\mu(x,\dy)=0
\end{align*}
where $\mu(x,\cdot)$ is a Borel measure, the canonical example is $\mu(x,\dy)=|x-y|^{-d-2s}\dy$. They proved interior Hölder regularity for weak solutions when $\mu$ is uniformly comparable to a non-degenerate $2s$-stable measure. Afterwards, \cite{Che25} generalized the result of \cite{DK20} by providing general conditions on the measure $\mu$ that also lead to interior Hölder regularity. We adapted the idea of \cite{Che25} and formulated similar conditions on the interaction measure, which also allow for the investigation of nonlinear operators with non-standard growth. We refer to \cite{RS16,BKS19,CS20,DK20,IS20,Hep26} for further discussions about possible conditions on the interaction measure.

Finally, let us comment on existing results about regularity theory on metric measure spaces. If $(X,d,\mu)$ is a so-called Ahlfors-$Q$-regular space, that is, $\mu(B_r(x))\asymp r^Q$ for some dimension $Q>0$, then the fractional $p$-Laplacian on $X$ formally reads as
\begin{align*}
    (-\Delta_p)^su(x)=\pv\int_X\frac{|u(x)-u(y)|^{p-2}(u(x)-u(y))}{d(x,y)^{Q+sp}}\mu(\dy),
\end{align*}
similarly to the Euclidean one. More generally, one can consider nonlocal operators on abstract doubling metric measure spaces. The canonical equivalent of the fractional $p$-Laplace operator is given by
\begin{align*}
    \pv\int_X\frac{|u(x)-u(y)|^{p-2}(u(x)-u(y))}{d(x,y)^{sp}\left(\mu(B_{d(x,y)}(x))+\mu(B_{d(x,y)}(y))\right)}\mu(\dy).
\end{align*}
For $p=2$, instead of Hölder regularity one often aims for heat kernel estimates via a probabilistic approach. The authors of \cite{BL02} studied Markov chains on the lattice $\mathbb{Z}^d$ that are comparable to a symmetric $2s$-stable process. They obtained two-sided heat kernel estimates for those Markov chains. This was generalized in \cite{CK03} to so-called $d$-sets, a special case of Ahlfors-$Q$-regular metric measure spaces. Further discussions about heat kernel estimates for Dirichlet forms on metric measure spaces can be found in \cite{BGK09,GH08,GHL14,CKKW18} including probabilistic and analytic methods. Robust versions of Hölder regularity and Harnack's inequality were established in \cite{Cho24}. For nonlinear equations, \cite{KS01} adapted the De Giorgi--Nash--Moser theory to metric measure spaces and established Hölder regularity for local $p$-Laplace equations on metric measure spaces satisfying a Poincar\'{e} inequality. In the nonlocal case, the recent papers \cite{CKKSS25,CGKS26} established regularity for the fractional $p$-Laplacian on metric measure spaces using a generalization of the extension method \cite{CS07}. To the best of our knowledge, there is no result on a priori Hölder regularity for fractional $p$-Laplace equations on arbitrary doubling metric measure spaces for $p\neq 2$ using the De Giorgi--Nash--Moser theory, so this work also contributes to the regularity theory on metric measure spaces, even for the fractional $p$-Laplacian.

\section{Preliminaries}\label{sec:Pre}

First, we agree on some notation and define important objects in the theory of metric measure spaces and Young functions. Let us stress that in the proofs we use constants $c$ and $C$, which may change from line to line, but only depend on the data as stated in the corresponding statement. More precisely, we use parentheses to express dependencies, i.e.\ $C=C(\lambda)$ is a constant that only depends on the parameter $\lambda$ and nothing else. Specific constants will be denoted $\overline{c},C_1,C_2$, for instance.

For a real number $a\in\R$, we define the positive part of $a$ by $a_+=\max\{a,0\}$ and its negative part by $a_-=\max\{-a,0\}$.

\subsection{Metric measure spaces}

Given a metric space $(X,d)$, we equip it with its Borel $\sigma$-algebra $\B(X)$, generated by all open sets in $X$ with respect to the metric $d$. For any $x\in X$ and $r>0$, we denote the open ball
\begin{align*}
    B_r(x)=\{y\in X\mid d(x,y)<r\}.
\end{align*}
With a slight abuse of notation, we may write $\lambda B$ for a scaled ball, i.e.\ if $B=B_r(x)$, then $\lambda B=B_{\lambda r}(x)$.

If not stated differently any $L^p$-space in this work, $p\in[1,\infty]$, is taken with respect to the measure $\mu$. Moreover, for any Borel measure $\mu$ on $(X,d)$, we define the mean integral and the mean value as follows. Given a Borel set $A\subseteq X$ with $\mu(A)\in(0,\infty)$ and a function $u\in L^1(A)$, we define
\begin{align*}
    (u)_A\coloneqq\fint_A u\,\d\mu\coloneqq\frac{1}{\mu(A)}\int_A u\,\d\mu.
\end{align*}

\begin{defi}
    \begin{enumerate}[label=(\roman*)]
        \item A triple $(X,d,\mu)$ is called a metric measure space if $(X,d)$ is a metric space and $\mu$ is a Borel measure on $(X,d)$.
        \item We say that $\mu$ has the doubling property (or $\mu$ is a doubling measure) if there exists a constant $\C>1$ such that, for any $x\in X$ and $R>0$,
        \begin{align*}
            \mu(B_{2R}(x))\leq\C\mu(B_R(x)).    
        \end{align*}
        \item Let $(X,d,\mu)$ be a complete metric measure space. If $\mu$ has the doubling property and satisfies
        \begin{align}\label{eqPre:BallsFinitePositiveMeasure}
            0<\mu(B_r(x))<\infty \enspace\text{for any}\  r>0,\ x\in X,
        \end{align}
        then we say that $(X,d,\mu)$ is a doubling metric measure space.
    \end{enumerate}
\end{defi}

Let us collect some classical auxiliary results about doubling metric measure spaces.

\begin{lemma}[{\cite[Lemma 8.1.13]{HKST15}}] \label{lemPre:LowerMassBound}
    Let $(X,d,\mu)$ be a doubling metric measure space. Then there exist some $Q\geq 1$ and $c>0$, depending only on $\C$ such that, for any $x,y\in X$ and $0<r\leq R$ with $y\in B_R(x)$, it holds
    \begin{align*}
        \frac{\mu(B_r(y))}{\mu(B_R(x))}\geq c\left(\frac{r}{R}\right)^Q.
    \end{align*}
\end{lemma}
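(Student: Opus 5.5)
The plan is to iterate the doubling inequality along a chain of concentric balls around $y$. Set $Q=\log_2\C\ge 1$; this is admissible because $\C>1$, and if one insists on $Q\geq 1$ one may simply enlarge $\C$ to $\max\{\C,2\}$. Fix $x,y\in X$ and $0<r\leq R$ with $y\in B_R(x)$.

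The first step is to compare the ball around $x$ with a ball around $y$. Since $d(x,y)<R$, the triangle inequality gives $B_R(x)\subseteq B_{2R}(y)$. Hence
\begin{align*}
    \mu(B_R(x))\leq\mu(B_{2R}(y)).
\end{align*}
This reduces the statement to a comparison of two concentric balls centered at $y$, namely $B_r(y)$ and $B_{2R}(y)$.

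The second step shrinks from radius $2R$ down to radius $r$. Choose the integer $k\ge 1$ with $2^{k-1}r< 2R\leq 2^k r$; explicitly $k=\lceil\log_2(2R/r)\rceil$, so that $k\le \log_2(R/r)+2$. Applying the doubling property $k$ times yields
\begin{align*}
    \mu(B_{2R}(y))\leq\mu(B_{2^kr}(y))\leq\C^k\mu(B_r(y)).
\end{align*}
Using the bound on $k$,
\begin{align*}
    \C^k\leq\C^2\,\C^{\log_2(R/r)}=\C^2\left(\frac{R}{r}\right)^{\log_2\C}=\C^2\left(\frac{R}{r}\right)^{Q}.
\end{align*}

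Combining the two steps gives
\begin{align*}
    \frac{\mu(B_r(y))}{\mu(B_R(x))}\geq \C^{-2}\left(\frac{r}{R}\right)^Q,
\end{align*}
so the statement holds with $c=\C^{-2}$. All quotients are well defined because balls have finite positive measure by \eqref{eqPre:BallsFinitePositiveMeasure}.

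There is no genuine obstacle here. The only care needed is the bookkeeping of the integer $k$ and of the off-center comparison, which together cost the factor $\C^{2}$ in $c$.
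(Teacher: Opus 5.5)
Your proof is correct. It is the standard doubling-iteration argument: put $B_R(x)$ inside $B_{2R}(y)$, then double about $k\le\log_2(R/r)+2$ times from $B_r(y)$. The paper gives no proof of its own and simply quotes this from \cite[Lemma 8.1.13]{HKST15}.

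One minor slip: $\C>1$ alone only gives $Q=\log_2\C>0$, not $Q\geq 1$. Your replacement of $\C$ by $\max\{\C,2\}$ already repairs this; alternatively, note as the paper does that the bound survives any increase of $Q$ because $r/R\leq 1$.
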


The optimal \emph{doubling dimension} $Q$ might be smaller than $1$. However, if that is the case, Lemma \ref{lemPre:LowerMassBound} remains true for $Q=1$. We require $Q\geq 1$ in order to prove a Sobolev--Poincar\'{e} inequality later on. The optimal value of $Q$ is not important.

\begin{lemma}[{\cite[Lemma 4.1.14]{HKST15}}] \label{lemPre:Properness}
    Let $(X,d,\mu)$ be a doubling metric measure space. Then $X$ is proper, i.e.\ any closed ball $\overline{B_R(x)}$ is compact.
\end{lemma}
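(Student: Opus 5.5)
The plan is to show that $X$ is boundedly compact: every closed ball $\overline{B_R(x)}$ is compact. Since $(X,d)$ is complete by definition of a doubling metric measure space, a closed subset of $X$ is compact if and only if it is totally bounded. As $\overline{B_R(x)}\subseteq B_{2R}(x)$, it therefore suffices to prove that every ball $B_{2R}(x)$ is totally bounded, i.e.\ that for each $\varepsilon>0$ it can be covered by finitely many balls of radius $\varepsilon$.

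The main step is a volume-counting argument based on Lemma \ref{lemPre:LowerMassBound} together with \eqref{eqPre:BallsFinitePositiveMeasure}. Fix $\varepsilon\in(0,R)$ and let $\{y_i\}_{i\in I}\subseteq B_{2R}(x)$ be a maximal $\varepsilon$-separated family, meaning $d(y_i,y_j)\geq\varepsilon$ for $i\neq j$. The balls $B_{\varepsilon/2}(y_i)$ are pairwise disjoint and contained in $B_{3R}(x)$. For each $i$ we have $y_i\in B_{3R}(x)$, so Lemma \ref{lemPre:LowerMassBound} gives $\mu(B_{\varepsilon/2}(y_i))\geq c\,(\varepsilon/(6R))^Q\mu(B_{3R}(x))$. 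Any finite subfamily $J\subseteq I$ then satisfies
\begin{align*}
    \# J\cdot c\left(\frac{\varepsilon}{6R}\right)^Q\mu(B_{3R}(x))\leq\sum_{i\in J}\mu(B_{\varepsilon/2}(y_i))\leq\mu(B_{3R}(x))<\infty,
\end{align*}
and since $\mu(B_{3R}(x))>0$ we can divide to get $\# J\leq c^{-1}(6R/\varepsilon)^Q$. Hence $I$ is finite with a bound independent of the subfamily.

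To justify the existence of a maximal separated family, note that the cardinality bound above shows that any $\varepsilon$-separated family in $B_{2R}(x)$ is finite. So we can build one greedily and stop when no point can be added, with no need for Zorn's lemma. By maximality, every $z\in B_{2R}(x)$ satisfies $d(z,y_i)<\varepsilon$ for some $i$. Thus the finitely many balls $B_\varepsilon(y_i)$ cover $B_{2R}(x)$, and hence also $\overline{B_R(x)}$. This proves total boundedness, and completeness then gives compactness.

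The only point requiring care is that \eqref{eqPre:BallsFinitePositiveMeasure} is used in both directions: finiteness of $\mu(B_{3R}(x))$ makes the counting bound meaningful, and positivity allows the division. Everything else is standard.
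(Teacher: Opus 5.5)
Your proof is correct: the disjoint-ball counting via Lemma \ref{lemPre:LowerMassBound} gives finite $\varepsilon$-nets of $B_{2R}(x)$ at every scale, and completeness then upgrades total boundedness of the closed set $\overline{B_R(x)}$ to compactness. The paper gives no proof and only cites \cite[Lemma 4.1.14]{HKST15}, which uses the same standard route (disjoint-ball counting for a doubling measure, then completeness); your only difference is that you count directly at scale $\varepsilon$ instead of first deriving the metric doubling property and iterating it.
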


\begin{lemma}[Lebesgue's differentiation theorem, {\cite[Chapter 3.4]{HKST15}}] \label{lemPre:LebesgueDifferentiation}
    Let $(X,d,\mu)$ be a doubling metric measure space. Then, for any $u\in L_\loc^1(X)$, it holds that
    \begin{align*}
        u(x)=\lim_{r\to 0}\fint_{B_r(x)}u(y)\mu(\dy)\enspace\text{for $\mu$-almost every}\ x\in X.
    \end{align*}
\end{lemma}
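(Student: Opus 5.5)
The plan is the standard covering argument: deduce Lebesgue's differentiation theorem from a weak-type $(1,1)$ bound for the centered Hardy--Littlewood maximal operator
\begin{align*}
    Mu(x)=\sup_{r>0}\fint_{B_r(x)}|u|\,\d\mu,
\end{align*}
together with the density of continuous functions in $L^1$. Because the statement is local, I fix a ball $B_0=B_R(x_0)$ and prove the identity for almost every $x\in B_0$. Only $r<R$ matters in the limit, so I may replace $u$ by $u\indicator_{B_{2R}(x_0)}\in L^1(X)$. Letting $R$ run through $\N$ then gives the full statement, since a countable union of null sets is null.

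The first step is the weak-type estimate
\begin{align*}
    \mu(\{Mu>\lambda\})\leq\frac{C(\C)}{\lambda}\|u\|_{L^1(X)}.
\end{align*}
I will use the $5r$-covering lemma, which holds in any metric space for a family of balls with uniformly bounded radii. Fix a compact set $K\subseteq\{Mu>\lambda\}\cap B_0$; since $\mu$ is Radon on a proper space (Lemma \ref{lemPre:Properness}), inner regularity lets me approximate from inside by compact sets. Each $x\in K$ has a ball $B_{r_x}(x)$ with $\int_{B_{r_x}(x)}|u|\,\d\mu>\lambda\mu(B_{r_x}(x))$. The radii are bounded because $\mu(B_{r_x}(x))<\|u\|_1/\lambda$: if the radii were unbounded, the balls would exhaust $X$, and $\mu(X)$ is either infinite or a finite quantity I can handle separately. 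The covering lemma yields a disjoint subfamily $\{B_i\}$ with $\bigcup_x B_{r_x}(x)\subseteq\bigcup_i 5B_i$. Doubling gives $\mu(5B_i)\leq\C^3\mu(B_i)$, so
\begin{align*}
    \mu(K)\leq\C^3\sum_i\mu(B_i)\leq\frac{\C^3}{\lambda}\|u\|_1.
\end{align*}

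The second step is the density argument. Continuous functions with bounded support are dense in $L^1(X)$, because $\mu$ is a locally finite Borel measure on a proper metric space and hence Radon. For such a $g$, continuity together with $\mu(B_r(x))>0$ from \eqref{eqPre:BallsFinitePositiveMeasure} gives $\fint_{B_r(x)}g\to g(x)$ for every $x$. Now set
\begin{align*}
    \Theta u(x)=\limsup_{r\to0}\left|\fint_{B_r(x)}u\,\d\mu-u(x)\right|.
\end{align*}
Then $\Theta u\leq\Theta(u-g)\leq M(u-g)+|u-g|$. Combining the weak-type bound with Chebyshev's inequality,
\begin{align*}
    \mu(\{\Theta u>2\lambda\})\leq C\lambda^{-1}\|u-g\|_1,
\end{align*}
which can be made arbitrarily small by the choice of $g$. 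Hence $\Theta u=0$ almost everywhere.

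The main obstacle is the measure-theoretic care in the first step. One has to ensure that $\{Mu>\lambda\}$ is measurable; it is in fact open, because for fixed $r$ the map $x\mapsto\int_{B_r(x)}|u|$ is lower semicontinuous. One also has to justify the bounded-radius condition in the covering lemma and the countability of the disjoint subfamily. Countability follows because the space is separable, being proper, and disjoint balls of positive measure inside a ball of finite measure can only be countably many.
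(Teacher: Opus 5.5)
Your argument is correct: the paper gives no proof of this lemma and simply quotes it from the cited textbook, and your route (a weak-type $(1,1)$ bound for the centred maximal function via the $5r$-covering lemma and doubling, then approximation by continuous functions with bounded support) is the standard proof of that result.

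The loose ends you flag can be closed cheaply. Openness of $\{Mu>\lambda\}$ does not follow from lower semicontinuity of $x\mapsto\int_{B_r(x)}|u|\,\d\mu$ alone, since one must also control $\mu(B_r(x))$, for instance by shrinking the radius slightly. The case $\mu(X)<\infty$ likewise needs handling. Both issues disappear if you work with the truncated maximal function $\sup_{0<r<R}\fint_{B_r(x)}|u|\,\d\mu$, which is all the density step needs. The covering then already gives the open superset $\bigcup_i 5B_i$ of small measure, so no measurability or inner-regularity argument is required.
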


The next lemma is often called $5B$-covering lemma.

\begin{lemma}\label{lemPre:5BCovering}
    Let $(X,d,\mu)$ be a doubling metric measure space. Let $\F$ be a family of balls with uniformly bounded radii. Then there exists a countable, pairwise disjoint subfamily $\mathcal{G}\subseteq\F$ such that
    \begin{align}\label{eqPre:5BCovering}
        \bigcup_{B\in\F}B\subseteq\bigcup_{B\in\mathcal{G}}5B.
    \end{align}
\end{lemma}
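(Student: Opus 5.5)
The plan is the classical Vitali greedy selection, organized by dyadic radius scales. Set $R_0=\sup\{\mathrm{rad}(B)\mid B\in\F\}<\infty$; if the union in \eqref{eqPre:5BCovering} is empty there is nothing to prove. For $j\geq 1$ I split
\begin{align*}
    \F_j=\bigset{B\in\F}{2^{-j}R_0<\mathrm{rad}(B)\leq 2^{-j+1}R_0}.
\end{align*}
Here I fix, for each $B\in\F$, one center and radius representing it. Then I build $\mathcal{G}_1\subseteq\F_1,\mathcal{G}_2\subseteq\F_2,\dots$ inductively. I take $\mathcal{G}_j$ to be a maximal pairwise disjoint subfamily of
\begin{align*}
    \bigset{B\in\F_j}{B\cap B'=\emptyset\ \text{for all}\ B'\in\mathcal{G}_1\cup\dots\cup\mathcal{G}_{j-1}},
\end{align*}
and set $\mathcal{G}=\bigcup_j\mathcal{G}_j$. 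Such a maximal family exists by Zorn's lemma. By construction $\mathcal{G}$ is pairwise disjoint.

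The covering property is the routine step. Let $B=B_r(x)\in\F_j$. By maximality, $B$ meets some $B'=B_{r'}(x')\in\mathcal{G}_1\cup\dots\cup\mathcal{G}_j$. Since $B'\in\F_i$ with $i\leq j$, we get $r'>2^{-j}R_0\geq r/2$, so $r<2r'$. For $z\in B$ and a point $w\in B\cap B'$,
\begin{align*}
    d(z,x')\leq d(z,x)+d(x,w)+d(w,x')<2r+r'<5r'.
\end{align*}
Hence $B\subseteq 5B'$, which gives \eqref{eqPre:5BCovering}.

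The only step that genuinely uses the doubling hypothesis is countability of $\mathcal{G}$, and I expect it to be the main point requiring care. I fix a base point $x_*\in X$ and $N\in\N$, and look at the balls of $\mathcal{G}_j$ whose centers lie in $B_N(x_*)$. These are disjoint balls $B_r(x)$ with $r>2^{-j}R_0$ and $x\in B_N(x_*)$, and each is contained in $B_{N+R_0}(x_*)$. By Lemma~\ref{lemPre:LowerMassBound}, applied with the larger ball $B_{N+R_0}(x_*)$ and with $r\leq R_0\leq N+R_0$, each such ball has
\begin{align*}
    \mu(B_r(x))\geq\mu(B_{2^{-j}R_0}(x))\geq c\bigl(2^{-j}R_0/(N+R_0)\bigr)^Q\mu(B_{N+R_0}(x_*)).
\end{align*}
By \eqref{eqPre:BallsFinitePositiveMeasure} the quantity $\mu(B_{N+R_0}(x_*))$ is positive and finite. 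Disjointness then bounds the number of these balls by $c^{-1}\bigl(2^{j}(N+R_0)/R_0\bigr)^Q$. Taking the union over $N$ and $j$ shows that $\mathcal{G}$ is countable.
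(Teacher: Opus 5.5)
Your proof is correct. The covering part (dyadic radius classes $\F_j$, maximal disjoint subfamilies chosen by Zorn's lemma, and the estimate $2r+r'<5r'$) is the standard argument behind \cite[Theorem 1.2]{Hei01}. The paper simply cites that result as valid on any metric space.

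You differ from the paper on countability. You use the doubling-based lower mass bound of Lemma \ref{lemPre:LowerMassBound} to show that, at each scale $j$, at most $c^{-1}(2^j(N+R_0)/R_0)^Q$ balls of $\mathcal{G}_j$ have centers in $B_N(x_*)$. The paper uses only \eqref{eqPre:BallsFinitePositiveMeasure}: since balls have positive finite measure, $X$ is separable, so $X$ admits no uncountable pairwise disjoint family of open balls at all.

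What each route buys: yours gives more, namely quantitative local finiteness of each $\mathcal{G}_j$, but it relies on the scale-by-scale construction and on doubling. The paper's is softer and applies to any disjoint family of balls. It also shows that your remark that countability is the step that \emph{genuinely} needs doubling is inaccurate. For instance, for fixed $N$ and $k$, the balls of $\mathcal{G}$ centered in $B_N(x_*)$ are disjoint and contained in $B_{N+R_0}(x_*)$, which has finite measure. So only finitely many of them have measure exceeding $1/k$, and since each has positive measure, countability follows without doubling.

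A minor point: $R_0$ already denotes the admissible scale in \eqref{eqPre:PoincareAssumption}, so use another letter for the supremum of the radii.
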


The existence of a pairwise disjoint subfamily $\mathcal{G}$ with \eqref{eqPre:5BCovering} follows from Zorn's lemma and is a general result on any metric space $(X,d)$, see \cite[Theorem 1.2]{Hei01}. The facts that $\mu$ is $\sigma$-finite and has full support, which follow from \eqref{eqPre:BallsFinitePositiveMeasure}, ensure that $X$ is separable. Thus, there cannot exist an uncountable pairwise disjoint family of balls in $X$. In fact, the doubling property of $\mu$ is not needed for this lemma.

\medskip

Finally, we recall that the space of Hölder continuous functions is denoted
\begin{align*}
    C^{0,\alpha}(A)=\bigset{u\colon A\to\R}{\|u\|_{C^{0,\alpha}(A)}\coloneqq\sup_A|u|+[u]_{C^{0,\alpha}(A)}<\infty}
\end{align*}
for $\alpha\in(0,1)$ and any subset $A\subseteq X$, where
\begin{align*}
    [u]_{C^{0,\alpha}(A)}\coloneqq\sup_{\substack{x,y\in A\\x\neq y}}\frac{|u(x)-u(y)|}{d(x,y)^\alpha}.
\end{align*}
If $\alpha=1$, then $C^{0,1}(A)$ is the space of Lipschitz functions, defined in the same way. Moreover, for an open set $\Omega\subseteq X$, the space $C_\loc^{0,\alpha}(\Omega)$ is defined as the set of all functions $u\colon\Omega\to\R$ such that $u\vert_K\in C^{0,\alpha}(K)$ for any compact set $K\subseteq\Omega$.

\subsection{Young functions}

\begin{defi}
    Let $1\leq p\leq q\leq\infty$. A function $F\colon[0,\infty)\to[0,\infty)$ is called a \emph{Young function} with $(p,q)$-growth if it is differentiable, increasing and convex, and satisfies $F(0)=0$ and the growth condition
    \begin{align}\label{eqPre:GrowthAssumption}
        p\leq\frac{f(t)t}{F(t)}\leq q
    \end{align}
    for any $t>0$, where $f=F^\prime$ denotes the derivative of $F$.
\end{defi}

We collect a few properties of Young functions in the following lemma. In this article, we will use those properties frequently without referring to the corresponding lemma.

\begin{lemma}
    Let $F$ be a Young function with $(p,q)$-growth for some $1\leq p\leq q<\infty$. Then the following statements hold true.
    \begin{enumerate}[label=(\alph*)]
        \item The function $t\mapsto\frac{F(t)}{t^p}$ is non-decreasing on $(0,\infty)$.
        \item For any $a\in[0,1]$ and $b\geq 0$,
        \begin{align*}
            a^qF(b)\leq F(ab)\leq a^pF(b).    
        \end{align*}
        \item For any $a\in[1,\infty)$ and $b\geq 0$,
        \begin{align*}
            a^pF(b)\leq F(ab)\leq a^qF(b).    
        \end{align*}
        \item For any $a,b\geq 0$,
        \begin{align*}
            \frac{1}{2}(F(a)+F(b))\leq F(a+b)\leq 2^{q-1}(F(a)+F(b)).
        \end{align*}
    \end{enumerate}
\end{lemma}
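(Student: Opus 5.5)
The whole lemma rests on the growth condition \eqref{eqPre:GrowthAssumption}, rewritten as a differential inequality for $\log F$. For $t>0$ we have $F(t)>0$: $F$ is increasing with $F(0)=0$, and $F(t)=0$ would make the ratio in \eqref{eqPre:GrowthAssumption} undefined. Dividing \eqref{eqPre:GrowthAssumption} by $t$ gives
\begin{align*}
    \frac{p}{t}\leq\frac{f(t)}{F(t)}=\frac{\d}{\d t}\log F(t)\leq\frac{q}{t}\quad\text{for } t>0.
\end{align*}
For (a), I will differentiate $t\mapsto F(t)t^{-p}$. This yields $t^{-p-1}\bigl(f(t)t-pF(t)\bigr)\geq 0$, so the function is non-decreasing. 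Symmetrically, $t\mapsto F(t)t^{-q}$ is non-increasing, because its derivative is $t^{-q-1}\bigl(f(t)t-qF(t)\bigr)\leq 0$. Both quotients are differentiable, so the mean value theorem applies and no absolute continuity of $f$ is needed.

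Parts (b) and (c) follow from these two monotonicity facts. For (c), take $a\geq1$ and $b>0$. Monotonicity of $F(t)/t^p$ between $b$ and $ab$ gives $F(ab)/(ab)^p\geq F(b)/b^p$, that is, $F(ab)\geq a^pF(b)$. Monotonicity of $F(t)/t^q$ gives $F(ab)\leq a^qF(b)$. The case $b=0$ is trivial. For (b), take $a\in(0,1]$ and apply (c) with the factor $1/a\geq1$ to the point $ab$. This gives $a^{-p}F(ab)\leq F(b)\leq a^{-q}F(ab)$, which rearranges to the claim. The case $a=0$ is trivial since $F(0)=0$.

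For (d), the lower bound comes from monotonicity: $F(a+b)\geq\max\{F(a),F(b)\}\geq\frac12(F(a)+F(b))$. For the upper bound, I will use convexity and then (c) with factor $2$:
\begin{align*}
    F(a+b)=F\Bigl(2\cdot\tfrac{a+b}{2}\Bigr)\leq 2^qF\Bigl(\tfrac{a+b}{2}\Bigr)\leq 2^{q}\cdot\tfrac12\bigl(F(a)+F(b)\bigr)=2^{q-1}\bigl(F(a)+F(b)\bigr).
\end{align*}
No step is a real obstacle. The only points needing care are the positivity of $F$ on $(0,\infty)$, which justifies dividing by $F$ and taking logarithms, and the separate treatment of the degenerate cases $a=0$ and $b=0$.
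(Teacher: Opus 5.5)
Your proof is correct. It is the standard argument: the growth condition gives monotonicity of $F(t)/t^p$ (non-decreasing) and $F(t)/t^q$ (non-increasing), from which the scaling bounds follow, and convexity gives the doubling bound in (d). The paper gives no proof of its own and only refers to \cite{HH19,CKW22}, where essentially this argument appears.
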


Similar properties also hold true for the derivative of any Young function.

\begin{lemma}
    Let $F$ be a Young function with $(p,q)$-growth for some $1\leq p\leq q<\infty$. Denote its derivative by $f=F^\prime$. Then the following statements hold true.
    \begin{enumerate}[label=(\alph*)]
        \item The function $t\mapsto\frac{f(t)}{t^{p-1}}$ is almost non-decreasing on $(0,\infty)$, i.e.\ it is true for any $0<t\leq\tau$ that
        \begin{align*}
            \frac{f(t)}{t^{p-1}}\leq\frac{q}{p}\frac{f(\tau)}{\tau^{p-1}}.
        \end{align*}
        \item For any $a\in[0,1]$ and $b\geq 0$,
        \begin{align*}
            \frac{p}{q}a^{q-1}f(b)\leq f(ab)\leq \frac{q}{p}a^{p-1}f(b).    
        \end{align*}
        \item For any $a\in[1,\infty)$ and $b\geq 0$,
        \begin{align*}
            \frac{p}{q}a^{p-1}f(b)\leq f(ab)\leq \frac{q}{p}a^{q-1}f(b).    
        \end{align*}
        \item For any $a,b\geq 0$,
        \begin{align*}
            \frac{1}{2}(f(a)+f(b))\leq f(a+b)\leq 2^{q-1}\frac{q}{p}(f(a)+f(b)).
        \end{align*}
    \end{enumerate}
\end{lemma}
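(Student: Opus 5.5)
We will derive all four statements from the growth condition \eqref{eqPre:GrowthAssumption}, written as the two-sided bound
\begin{align*}
    p\frac{F(t)}{t}\leq f(t)\leq q\frac{F(t)}{t}\qquad (t>0),
\end{align*}
together with the corresponding properties of $F$ from the previous lemma. Since $F$ is convex and differentiable, $f$ is non-decreasing and continuous on $[0,\infty)$. This allows us to reduce boundary cases such as $a=0$ or $b=0$ to limits $t\downarrow 0$. For $p>1$ we have $f(0)=\lim_{t\downarrow0}f(t)\leq\lim_{t\downarrow 0} q F(t)/t=0$, because $F(t)\leq t^pF(1)$ for $t\leq 1$. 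In the degenerate case $p=q=1$, the function $F$ is linear and $f$ is constant. So the main work is for $a,b,t>0$.

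For (a), let $0<t\leq\tau$. We estimate
\begin{align*}
    \frac{f(t)}{t^{p-1}}\leq q\frac{F(t)}{t^p}\leq q\frac{F(\tau)}{\tau^p}\leq\frac{q}{p}\frac{f(\tau)}{\tau^{p-1}}.
\end{align*}
Here the first and last steps use the growth bound, and the middle step uses that $t\mapsto F(t)/t^p$ is non-decreasing (part (a) of the previous lemma).

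For (b), let $a\in(0,1]$ and $b>0$. Using the upper growth bound, part (b) for $F$, and then the lower growth bound, we get
\begin{align*}
    f(ab)\leq q\frac{F(ab)}{ab}\leq q\frac{a^pF(b)}{ab}=q\,a^{p-1}\frac{F(b)}{b}\leq\frac{q}{p}a^{p-1}f(b).
\end{align*}
Symmetrically, using $F(ab)\geq a^qF(b)$,
\begin{align*}
    f(ab)\geq p\frac{F(ab)}{ab}\geq p\,a^{q-1}\frac{F(b)}{b}\geq\frac{p}{q}a^{q-1}f(b).
\end{align*}
The case $a=0$ or $b=0$ follows by continuity of $f$, using the discussion of $f(0)$ above. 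Part (c) is proved verbatim, with part (c) for $F$ in place of part (b): for $a\geq 1$ we have $a^pF(b)\leq F(ab)\leq a^qF(b)$, which yields the exponents $p-1$ and $q-1$ in swapped roles.

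For (d), monotonicity of $f$ gives $f(a+b)\geq\max\{f(a),f(b)\}\geq\frac12(f(a)+f(b))$. For the upper bound, we use $a+b\leq 2\max\{a,b\}$, monotonicity, and (c) with factor $2$:
\begin{align*}
    f(a+b)\leq f(2\max\{a,b\})\leq\frac{q}{p}2^{q-1}f(\max\{a,b\})\leq 2^{q-1}\frac{q}{p}(f(a)+f(b)).
\end{align*}
The only mildly delicate point is the treatment of the endpoints $t=0$, where the quotient $f(t)t/F(t)$ is undefined. This is handled by continuity of $f$ as described above; everything else is a direct chain of inequalities.
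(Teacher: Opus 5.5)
Your proof is correct. Writing the growth condition as $pF(t)/t\leq f(t)\leq qF(t)/t$ and feeding in the corresponding properties of $F$ from the preceding lemma gives exactly the constants $\frac{q}{p}$, $\frac{p}{q}$ and $2^{q-1}\frac{q}{p}$ of the statement. Continuity of $f$, which follows from convexity plus differentiability, then correctly handles the endpoint cases $a=0$ or $b=0$.

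The paper gives no proof of this lemma. It only calls these facts well known and refers to \cite{HH19,CKW22}, and your argument is the standard one behind that citation.
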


All of the above properties are well known. We refer to \cite{HH19,CKW22} for various proofs.

\begin{defi}
    A function $F$ is called an \emph{$\mathcal{N}$-function} with $(p,q)$-growth if $F$ is a Young function with $(p,q)$-growth with $p>1$ and $q<\infty$, and moreover,
    \begin{align*}
        \lim_{t\to 0}\frac{F(t)}{t}=0 \and \lim_{t\to\infty}\frac{F(t)}{t}=\infty.
    \end{align*}
\end{defi}

\begin{rem}
    Later on, we will often be confronted with the term $F^\prime(|t|)\frac{t}{|t|}$, which originally appeared as the derivative of the function $F(|t|)$ and is defined for $t\in\R\setminus\{0\}$. However, when $F$ is an $\mathcal{N}$-function, we have
    \begin{align*}
        \lim_{t\to 0^+}F^\prime(|t|)\frac{t}{|t|}=\lim_{t\to 0^+}F^\prime(t)\begin{cases}
            \leq q\lim_{t\to 0^+}\frac{F(t)}{t}=0, \\
            \geq p\lim_{t\to 0^+}\frac{F(t)}{t}=0
        \end{cases}
    \end{align*}
    and similarly,
    \begin{align*}
        \lim_{t\to 0^-}F^\prime(|t|)\frac{t}{|t|}=-\lim_{t\to 0^-}F^\prime(-t)=0.
    \end{align*}
    Therefore, the convention $F^\prime(|t|)\frac{t}{|t|}\vert_{t=0}=0$ is justified. In fact, we will simply use the convention that $\frac{t}{|t|}\vert_{t=0}=0$ throughout this work.
\end{rem}

We will need the following useful corollary that will allow for a Jensen-type inequality below.

\begin{cor}\label{corPre:ConvexityComposition}
    Let $F$ be an $\mathcal{N}$-function with $(p,q)$-growth. Then the function $t\mapsto F(t^{1/p})$ is equivalent to a convex function on $[0,\infty)$, that is, there exists $G\colon[0,\infty)\to[0,\infty)$ and a constant $C=C(p,q)>0$ such that, for any $t\geq 0$, it holds that
    \begin{align}\label{eqPre:ConvexityCompositionEquivalence}
        G(t)\leq F(t^{1/p})\leq CG(t).
    \end{align}
\end{cor}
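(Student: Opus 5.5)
Set $\Phi(t)\coloneqq F(t^{1/p})$ for $t\geq 0$. The plan is to show that $\Phi$ is \emph{almost convex}, meaning that $\Phi(t)/t$ is almost non-decreasing, and then to take $G$ to be a convex minorant built from $\Phi(t)/t$.

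The first step is a growth estimate for $\Phi$. For $a\geq 1$ and $t>0$, property (c) of Young functions, applied with the factor $a^{1/p}\geq 1$, gives
\begin{align*}
\Phi(at)=F(a^{1/p}t^{1/p})\geq (a^{1/p})^pF(t^{1/p})=a\Phi(t).
\end{align*}
Hence $t\mapsto\Phi(t)/t$ is non-decreasing on $(0,\infty)$; this is exactly property (a) for $F$ after the substitution $s=t^{1/p}$. In the same way, (c) and the upper bound give $\Phi(at)\leq a^{q/p}\Phi(t)$ for $a\geq 1$.

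Next define
\begin{align*}
G(t)\coloneqq\int_0^t\frac{\Phi(\tau)}{\tau}\,\d\tau .
\end{align*}
The integrand is non-decreasing, so $G$ is convex. It is also finite and satisfies $G(0)=0$, because $\Phi(\tau)/\tau\leq\Phi(1)\,\tau^{p/p-1}$ near $0$; more simply, monotonicity gives $\Phi(\tau)/\tau\leq\Phi(t)/t$ for $\tau\leq t$. This same monotonicity yields the upper bound
\begin{align*}
G(t)\leq t\cdot\frac{\Phi(t)}{t}=\Phi(t).
\end{align*}
For the lower bound we restrict the integral to $[t/2,t]$ and use the growth estimate with $a=2$:
\begin{align*}
G(t)\geq\frac{t}{2}\cdot\frac{\Phi(t/2)}{t/2}=\Phi(t/2)\geq 2^{-q/p}\Phi(t).
\end{align*}
Together these give $G\leq\Phi\leq 2^{q/p}G$, which is \eqref{eqPre:ConvexityCompositionEquivalence} with $C=2^{q/p}$. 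Finally, $G$ maps into $[0,\infty)$, and it is even differentiable since $\Phi$ is continuous.

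The only delicate point is integrability of $\Phi(\tau)/\tau$ near $0$. This is settled by the bound $\Phi(\tau)\leq\tau\,\Phi(1)$ for $\tau\leq 1$, which follows from the monotonicity established in the first step. The $\mathcal{N}$-function assumption is therefore not really needed beyond $p\geq 1$.
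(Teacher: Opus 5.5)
Your proof is correct and is essentially the paper's argument: the same $G(t)=\int_0^t\Phi(\tau)/\tau\,\d\tau$, and the same restriction to $[t/2,t]$ to get $\Phi\le 2^{q/p}G$. The only differences are cosmetic: you get convexity from the monotone integrand rather than from $G''\ge 0$, and $G\le\Phi$ from monotonicity of $\Phi(\tau)/\tau$ rather than from $\varphi'(\tau)\tau\ge\varphi(\tau)$; the garbled bound $\Phi(1)\tau^{p/p-1}$ is harmless, since your estimate $\Phi(\tau)/\tau\le\Phi(t)/t$ already gives finiteness.
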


\begin{proof}
    Define $\varphi(t)=F(t^{1/p})$. Recall that $t\mapsto F(t)/t^p$ is non-decreasing. Setting $\tau=t^p$, we see that
    \begin{align}\label{eqPre:ConvexityCompositionMonotonicity}
        \tau\mapsto\frac{\varphi(\tau)}{\tau}
    \end{align}
    is also non-decreasing. Moreover,
    \begin{align}\label{eqPre:ConvexityCompositionIndicesPhi}
        \frac{\varphi^\prime(\tau)\tau}{\varphi(\tau)}=\frac{\frac{1}pF^\prime(\tau^{1/p})\tau^{1/p}}{F(\tau^{1/p})}\geq 1
    \end{align}
    by \eqref{eqPre:GrowthAssumption}. Let us claim that the function
    \begin{align*}
        G(t)=\int_0^t\frac{\varphi(\tau)}{\tau}\d\tau
    \end{align*}
    is convex and equivalent to $\varphi$. The convexity follows from
    \begin{align*}
        G^{\prime\prime}(t)=\frac{\varphi^\prime(t)t-\varphi(t)}{t^2}\geq 0,
    \end{align*}
    where we applied \eqref{eqPre:ConvexityCompositionIndicesPhi}. Now, let us prove \eqref{eqPre:ConvexityCompositionEquivalence}. Using once more \eqref{eqPre:ConvexityCompositionIndicesPhi} and the fundamental theorem of calculus, we see that
    \begin{align*}
        G(t)=\int_0^t\frac{\varphi(\tau)}{\tau}\d\tau\leq\int_0^t\varphi^\prime(\tau)\d\tau=\varphi(t),
    \end{align*}
    that is, the lower bound in \eqref{eqPre:ConvexityCompositionEquivalence}. For the upper bound, we use the monotonicity of \eqref{eqPre:ConvexityCompositionMonotonicity} to write
    \begin{align*}
        G(t)\geq\int_{t/2}^t\frac{\varphi(\tau)}{\tau}\d\tau\geq\left(t-\frac{t}{2}\right)\frac{\varphi(t/2)}{t/2}=\varphi(t/2)=F\left(\frac{t^{1/p}}{2^{1/p}}\right)\geq 2^{-q/p}F(t^{1/p})\geq c\varphi(t).
    \end{align*}
\end{proof}

\begin{defi}
    Let $1<p\leq q<\infty$ and let $F$ be an $\mathcal{N}$-function with $(p,q)$-growth. We define the \emph{Legendre transform} $F^\ast$ of $F$ by
    \begin{align*}
        F^\ast(t)=\sup_{\tau\geq 0}(t\tau-F(\tau)).
    \end{align*}
\end{defi}

\begin{lemma}[{\cite[Proposition 2.4.9]{HH19}}]
    For any $\mathcal{N}$-function $F$ with $(p,q)$-growth, its Legendre transform $F^\ast$ is also an $\mathcal{N}$-function with $(\frac{q}{q-1},\frac{p}{p-1})$-growth.
\end{lemma}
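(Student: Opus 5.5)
The plan is to verify the defining properties of an $\mathcal{N}$-function for $F^\ast$ directly from the definition, using $f=F'$ and its generalized inverse. Since $F$ is convex and differentiable with $F(0)=0$, and $F(t)/t\to 0$ as $t\to 0$, $F(t)/t\to\infty$ as $t\to\infty$, the derivative $f$ is non-decreasing with $f(0^+)=0$ and $f(t)\to\infty$. For $t\geq 0$ the supremum in $F^\ast(t)=\sup_{\tau\geq 0}(t\tau-F(\tau))$ is therefore finite and attained at some $\tau$ with $f(\tau)= t$ (or on the plateau where $f\equiv t$). This gives the Young equality
\begin{align*}
    F^\ast(f(\tau))=\tau f(\tau)-F(\tau).
\end{align*}
Set $g(t)=f^{-1}(t)=\sup\{\tau\geq 0: f(\tau)\leq t\}$, or more cleanly $\inf\{\tau : f(\tau)>t\}$. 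Then $F^\ast(t)=\int_0^t g(\sigma)\,\d\sigma$, so $F^\ast$ is convex, non-decreasing, $F^\ast(0)=0$, and differentiable wherever $g$ is continuous.

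The growth indices are obtained as follows. Substituting $t=f(\tau)$ in Young's equality and using $p\le f(\tau)\tau/F(\tau)\le q$ gives
\begin{align*}
    \Bigl(1-\frac1p\Bigr)\tau f(\tau)\le F^\ast(f(\tau))=\tau f(\tau)-F(\tau)\le\Bigl(1-\frac1q\Bigr)\tau f(\tau).
\end{align*}
Since $(F^\ast)'(f(\tau))=\tau$, the quantity $\frac{t(F^\ast)'(t)}{F^\ast(t)}$ at $t=f(\tau)$ equals $\frac{\tau f(\tau)}{F^\ast(f(\tau))}$, which lies in $[\frac{q}{q-1},\frac{p}{p-1}]$. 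Because $f$ is continuous and onto $[0,\infty)$, this covers all $t>0$. The indices are finite and exceed $1$ since $1<p\le q<\infty$.

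The limits come next. Because $F^\ast$ has $(\frac q{q-1},\frac p{p-1})$-growth with lower index $>1$, the function $F^\ast(t)/t^{q'}$ is non-decreasing, and by the almost-monotonicity $F^\ast(t)\le t^{q'}F^\ast(1)$ for $t\le 1$ and $F^\ast(t)\ge t^{q'}F^\ast(1)$ for $t\ge1$. Here $q'=\frac q{q-1}>1$, and this yields $F^\ast(t)/t\to0$ and $\to\infty$. One also needs $F^\ast(1)\in(0,\infty)$, which follows from $f$ being unbounded and $f(0^+)=0$.

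The main obstacle is differentiability of $F^\ast$ when $f$ is not strictly increasing, since then $g$ jumps. Strict monotonicity of $f$ does follow from the growth condition, however. On a plateau $f\equiv c>0$ over $[a,b]$ we have $F(b)=F(a)+c(b-a)$ and $cb\ge pF(b)$, so $F$ is increasing. Moreover, $(F(t)/t^p)'=\frac{t f(t)-pF(t)}{t^{p+1}}\ge0$, and the quantity $tf-pF$ has derivative $tf'-(p-1)f<0$ on the plateau. Integrating this over the plateau keeps $tf-pF\ge0$, which is consistent and not a contradiction. So instead I would argue directly that $g$ is continuous: a jump of $g$ at $t$ means $f\equiv t$ on an interval $[a,b]$. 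In that case $F^\ast$ is still differentiable but with one-sided derivatives $a$ and $b$, which is not differentiable. If this genuinely occurs, I would either invoke the cited \cite[Proposition 2.4.9]{HH19}, or note that the growth condition then forces a contradiction. That contradiction comes from comparing $tf(t)/F(t)$ at $a$ and $b$: since $F(b)-F(a)=c(b-a)$, the ratio $cb/F(b)$ must stay $\ge p>1$ while $F$ grows linearly with slope $c$. Over a long plateau this fails, but short plateaus are possible, so the cleanest route is to rely on the cited reference for the differentiability subtlety.
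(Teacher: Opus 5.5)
Your computations are correct as far as they go: Young's equality $F^\ast(f(\tau))=\tau f(\tau)-F(\tau)$, the bound $(1-\frac1p)\tau f(\tau)\le F^\ast(f(\tau))\le(1-\frac1q)\tau f(\tau)$, the representation $F^\ast(t)=\int_0^t g$, and the limit argument are all fine.

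The gap is the step $(F^\ast)'(f(\tau))=\tau$, that is, the differentiability of $F^\ast$ that the paper's definition of a Young function requires. Your last paragraph does not resolve it. You first assert that strict monotonicity of $f$ follows from the growth condition. You then find no contradiction, and then call $F^\ast$ ``still differentiable'' with two different one-sided derivatives. Deferring to \cite{HH19} cannot close this gap, because no source can: differentiability genuinely fails. Plateaus of $f$ are compatible with $(p,q)$-growth, and they produce corners of $F^\ast$. Concretely, take $F(\tau)=\frac{\tau^2}{2}$ on $[0,1]$, $F(\tau)=\tau-\frac12$ on $[1,\frac32]$, and $F(\tau)=(\frac{2\tau}{3})^{3/2}$ on $[\frac32,\infty)$. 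This $F$ is $C^1$ and convex, has $f\equiv1$ on $[1,\frac32]$, and satisfies $\tau f(\tau)/F(\tau)\in[\frac32,2]$ for all $\tau>0$. So it is an $\mathcal{N}$-function with $(\frac32,2)$-growth. Yet $F^\ast(t)=\frac12\max\{t^2,t^3\}$, whose one-sided derivatives at $t=1$ are $1$ and $\frac32$. Read with the paper's differentiability requirement, the statement can therefore only hold in a derivative-free sense, and that is what your argument should deliver.

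You already have everything needed for that. Suppose $f^{-1}(\{t\})=[a,b]$, with $a=b$ off plateaus. Then $(F^\ast)'_-(t)=a$ and $(F^\ast)'_+(t)=b$, and Young's equality holds with both $\tau=a$ and $\tau=b$. Your two-sided estimate therefore gives $\frac{q}{q-1}\le t(F^\ast)'_\pm(t)/F^\ast(t)\le\frac{p}{p-1}$ for every $t>0$. Since $F^\ast$ is convex, hence locally absolutely continuous, it follows that $t\mapsto F^\ast(t)/t^{q/(q-1)}$ is non-decreasing and $t\mapsto F^\ast(t)/t^{p/(p-1)}$ is non-increasing. Alternatively, obtain these directly from $F^\ast(\Lambda t)=\sup_{\rho\ge0}(\Lambda^{q/(q-1)}t\rho-F(\Lambda^{1/(q-1)}\rho))\ge\Lambda^{q/(q-1)}F^\ast(t)$ for $\Lambda\ge1$, and analogously with $p$. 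These monotonicity properties, together with convexity, $F^\ast(0)=0$, Young's equality and your limit argument, are exactly what the paper uses afterwards in Lemma \ref{lemPre:Young}. You should state the conclusion in this form rather than claim differentiability.
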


Later, we will use the following version of Young's inequality.

\begin{lemma}\label{lemPre:Young}
    Let $F$ be an $\mathcal{N}$-function with $(p,q)$-growth and denote its derivative by $f$. Then, for any $t,\tau\geq 0$ and $\varepsilon,\delta\in(0,1]$, it holds that
    \begin{align*}
        f(t)\delta\tau\leq\varepsilon(q-1) F(t)\delta^\frac{q}{q-1}+\varepsilon^{-q}F(\tau).
    \end{align*}
\end{lemma}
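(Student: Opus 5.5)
The plan is to split into the trivial case and the main case, and in the main case to apply the classical Young inequality $ab\leq F^\ast(a)+F(b)$ with a rescaling of both arguments. If $t=0$ or $\tau=0$, the left-hand side vanishes and there is nothing to prove (recall $f(0)=0$ for an $\mathcal{N}$-function). Otherwise we write
\begin{align*}
    f(t)\delta\tau=\bigl(\varepsilon^{q} f(t)\delta\bigr)\bigl(\varepsilon^{-q}\tau\bigr)\cdot 1,
\end{align*}
or more conveniently we insert a parameter $\sigma=\varepsilon\in(0,1]$ on the $\tau$-side. Concretely, set $a=\sigma^{-1}f(t)\delta$ and $b=\sigma\tau$, so that $f(t)\delta\tau=ab\leq F^\ast(a)+F(b)$. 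The term $F(b)=F(\sigma\tau)$ is at most $F(\tau)\leq\varepsilon^{-q}F(\tau)$ by the scaling property (b) with $\sigma\le1$, so the real work is to bound $F^\ast(\sigma^{-1}\delta f(t))$.

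For the conjugate term we use two standard facts. First, $F^\ast(f(t))=tf(t)-F(t)\leq(q-1)F(t)$, which comes from the equality case of Young's inequality (the supremum defining $F^\ast(f(t))$ is attained at $\tau=t$, since $f$ is nondecreasing) together with the growth bound $tf(t)\leq qF(t)$. Second, $F^\ast$ has $(\frac{q}{q-1},\frac{p}{p-1})$-growth by the quoted lemma from \cite{HH19}, so the scaling properties (b) and (c) apply to $F^\ast$ with these indices.

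The main obstacle is arranging the exponents so that both the factor $\varepsilon$ and the factor $\delta^{q/(q-1)}$ appear. A single scaling of the argument $\sigma^{-1}\delta$ cannot be bounded above by $\delta^{q/(q-1)}$ if $\sigma^{-1}\delta>1$. So we split it as $\sigma^{-1}\delta=\delta\cdot\sigma^{-1}$ and treat the two factors separately. For $\delta\leq1$, property (b) for $F^\ast$, whose lower index is $\frac{q}{q-1}$, gives $F^\ast(\delta x)\leq\delta^{q/(q-1)}F^\ast(x)$. For the factor $\sigma^{-1}\geq1$, the upper index $\frac{p}{p-1}$ enters. This does not match the required form, so we avoid enlarging the argument of $F^\ast$ and instead put all the $\varepsilon$-dependence on the $F$-side.

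Take $a=\delta f(t)$ and $b=\tau$, and use the weighted Young inequality in the form $ab\leq\varepsilon F^\ast(a)+\varepsilon F(b/\varepsilon)$. This holds because $ab=\varepsilon\,a\,(b/\varepsilon)\leq\varepsilon\bigl(F^\ast(a)+F(b/\varepsilon)\bigr)$. Then:
\begin{align*}
    \varepsilon F^\ast(\delta f(t))&\leq\varepsilon\delta^{\frac{q}{q-1}}F^\ast(f(t))\leq\varepsilon(q-1)\delta^{\frac{q}{q-1}}F(t),\\
    \varepsilon F(\tau/\varepsilon)&\leq\varepsilon\,\varepsilon^{-q}F(\tau)\leq\varepsilon^{-q}F(\tau),
\end{align*}
where the first line uses property (b) for $F^\ast$ and the identity $F^\ast(f(t))=tf(t)-F(t)$, and the second line uses property (c) for $F$ with $a=\varepsilon^{-1}\ge1$. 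Adding the two bounds yields the claim.
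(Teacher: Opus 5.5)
Your final argument is correct and essentially the paper's proof: Fenchel's inequality with an $\varepsilon$-rescaling, then the lower index $\frac{q}{q-1}$ of $F^\ast$ to extract $\delta^{q/(q-1)}$, property (c) of $F$ for $F(\tau/\varepsilon)\leq\varepsilon^{-q}F(\tau)$, and $F^\ast(f(t))=tf(t)-F(t)\leq(q-1)F(t)$. The only cosmetic difference is that you multiply Fenchel's inequality for the pair $(\delta f(t),\tau/\varepsilon)$ by $\varepsilon$, whereas the paper applies it to $(\varepsilon\delta f(t),\tau/\varepsilon)$ and then uses $F^\ast(\varepsilon x)\leq\varepsilon F^\ast(x)$. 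Your version even yields the slightly better factor $\varepsilon^{1-q}$, and the abandoned $\sigma^{-1}$-rescaling attempt should simply be dropped from a write-up.
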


\begin{proof}
    The so-called \emph{Fenchel's inequality} (\cite[Section 2.4]{HH19}) for $\mathcal{N}$-functions reads as
    \begin{align*}
        t\tau\leq F^\ast(t)+F(\tau).
    \end{align*}
    If we apply this inequality for $\varepsilon f(t)\delta$ and $\tau/\varepsilon$, then we arrive at
    \begin{align*}
        f(t)\delta\tau=\varepsilon f(t)\delta\frac{\tau}{\varepsilon}\leq F^\ast(\varepsilon f(t)\delta)+F\left(\frac{\tau}{\varepsilon}\right).
    \end{align*}
    Since $F^\ast$ is also a Young function with indices $q/(q-1)$ and $p/(p-1)$, and $\varepsilon,\delta\leq 1$, we have 
    \begin{align*}
        F^\ast(\varepsilon f(t)\delta)\leq\varepsilon F^\ast(f(t)\delta)\leq\varepsilon F^\ast(f(t))\delta^\frac{q}{q-1},
    \end{align*}
    whereas $F(\tau/\varepsilon)\leq\varepsilon^{-q}F(\tau)$. Hence, it is left to notice that it can be seen from the definition of $F^\ast$ that
    \begin{align*}
        F^\ast(f(t))=f(t)t-F(t)\leq (q-1)F(t)
    \end{align*}
    due to \eqref{eqPre:GrowthAssumption}.
\end{proof}

Next, we recall the definition of Orlicz spaces. Fix a metric measure space $(X,d,\mu)$, let $F$ be a Young function and $A\subseteq X$ be some Borel set. Then the Orlicz space $L^F(A)$ is defined as
\begin{align*}
    L^F(A)=\bigset{u\colon A\to\R\ \text{Borel measurable}}{\int_A F(|u|)\d\mu<\infty}.
\end{align*}
The norm in $L^F(A)$ is given by the Luxemburg norm
\begin{align*}
    \|u\|_{L^F(A)}=\inf\bigset{\lambda>0}{\int_A F\left(\frac{|u|}{\lambda}\right)\d\mu\leq 1}.
\end{align*}
For an open set $\Omega\subseteq X$, we also define the space $L_\loc^F(\Omega)$ as the set of all Borel measurable functions $u\colon\Omega\to\R$ such that, for any compact subset $K\subseteq\Omega$, $u\vert_K$ belongs to $L^F(K)$.

\subsection{Function spaces and general nonlocal operators}

In this and the next subsection, we fix the setting in which we will work for the rest of this article.

Let $1<p\leq q<\infty$ be two indices, let $F\colon [0,\infty)\to[0,\infty)$ be an $\mathcal{N}$-function with $(p,q)$-growth and denote its derivative by $f$. Fix a doubling metric measure space $(X,d,\mu)$, where we denote the doubling constant of $\mu$ by $\C$. Let $h\colon X\times X\times\R\to\R$ be a jointly Borel measurable function satisfying $h(x,y,t)=-h(y,x,-t)$ and moreover, we assume that there exists $\kappa\geq 1$ such that
\begin{align}\label{eqPre:Comparability}
    \kappa^{-1}f(|t|)\leq h(x,y,t)\frac{t}{|t|}\leq\kappa f(|t|)
\end{align}
for any $t\in\R$ and $x,y\in X$. Note that, in particular, $h(x,y,t)$ and $t$ always have the same sign, $h(x,y,0)=0$, and
\begin{align}\label{eqPre:UpperBoundNonStandardGrowthFunction}
    |h(x,y,t)|\leq\kappa f(|t|).    
\end{align}

Moreover, we fix a family $\{\nu(x,\cdot)\}_{x\in X}$ of kernels $\nu(x,\cdot)\colon\B(X)\to[0,\infty]$, i.e.\ $\nu(x,\cdot)$ is a Borel measure on $X$ for any $x\in X$ and $\nu(\cdot,A)$ is Borel measurable for any $A\in\B(X)$. We will always assume that $\nu(x,\{x\})=0$ for $\mu$-almost all $x\in X$.

Before stating the structural assumptions on $\nu$, we introduce the formal operator and the associated function spaces. Their well-posedness under those assumptions will be discussed below.

Define the nonlocal operator $\L$ formally by
\begin{align*}
    \L u(x)&=\pv\int_X h\left(x,y,\frac{|u(x)-u(y)|}{d(x,y)^s}\right)\frac{\nu(x,\dy)}{d(x,y)^s} \\
    &\coloneqq\lim_{\varepsilon\to 0^+}\int_{X\setminus B_\varepsilon(x)}h\left(x,y,\frac{|u(x)-u(y)|}{d(x,y)^s}\right)\frac{\nu(x,\dy)}{d(x,y)^s}.
\end{align*}
The notion $\pv$ refers to the \emph{Cauchy principal value} of the integral. However, we do not want to evaluate the operator pointwise, so the principal value is not important for us. Instead, we want to introduce the notion of weak solutions. For that, we extend the notion of fractional Orlicz--Sobolev spaces in the following way. Recall that the Orlicz space $L^F$ has already been defined above.
\begin{defi}
    Let $\Omega\subseteq X$ be an open set. 
    \begin{enumerate}[label=(\alph*)]
        \item The \emph{fractional Orlicz--Sobolev space} $W^{s,F,\nu}(\Omega)$ is defined as
        \begin{align*}
            W^{s,F,\nu}(\Omega)=\bigset{u\in L^F(\Omega)}{\int_\Omega\int_\Omega F\left(\frac{|u(x)-u(y)|}{d(x,y)^s}\right)\nu(x,\dy)\mu(\dx)<\infty}.
        \end{align*}
        \item The space $W_\loc^{s,F,\nu}(\Omega)$ is the set of all functions $u\in L_\loc^F(\Omega)$ such that, for any compact set $K\subseteq\Omega$, there exists an open set $U$, satisfying $K\subseteq U\Subset\Omega$, such that $u\vert_U\in W^{s,F,\nu}(U)$.
        \item The space $W_0^{s,F,\nu}(\Omega)$ is defined as
        \begin{align*}
            W_0^{s,F,\nu}(\Omega)=\{u\in W^{s,F,\nu}(X) \mid u=0\ \text{$\mu$-almost everywhere in}\ X\setminus\Omega\}.
        \end{align*}
    \end{enumerate}
\end{defi}

The notation $U\Subset\Omega$ means that $\overline{U}$ is compact and $\overline{U}\subseteq\Omega$. For more information about classical fractional Orlicz--Sobolev spaces, we refer to \cite{FS19,ACPS21} and references therein.

When working with weak solutions in $\Omega$, it is not sufficient to require $u\in W^{s,F,\nu}(\Omega)$. We also need an appropriate condition on the tail of $u$, which takes care of the behavior of $u$ in the exterior of $\Omega$. We define the tail as follows.
\begin{defi}
    \begin{enumerate}
        \item For any Borel measurable function $u\colon X\to\R$ and for $x_0\in X$ and $R>0$, we define
        \begin{align}\label{eqPre:TailDefinition}
            \tail{u}{x_0}{R}=\sup_{x\in B_{R/2}(x_0)}\int_{X\setminus B_R(x_0)}f\left(\frac{|u(y)|}{d(x,y)^s}\right)\frac{\nu(x,\dy)}{d(x,y)^s}
        \end{align}
        and its tail in $X\setminus B_R(x_0)$ by
        \begin{align*}
            \Tail{u}{x_0}{R}=R^sf^{-1}(R^s\tail{u}{x_0}{R}).
        \end{align*}
        \item We define the tail space $L_{s,\nu}^f(X)$ as
        \begin{align*}
            L_{s,\nu}^{f}(X)&=\{u\in L_\loc^1(X) \mid \tail{u}{x_0}{R}<\infty \ \text{for all}\ x_0\in X,\ R>0\}
        \end{align*}
    \end{enumerate}
\end{defi}

Notice that the inverse function $f^{-1}$ of $f$ is well-defined, since the lower growth index of $F$ is larger than $1$. The supremum in \eqref{eqPre:TailDefinition} has to be understood as an essential supremum with respect to the measure $\mu$. Moreover, it is important in \eqref{eqPre:TailDefinition} that $x$ and $y$ are separated by a multiple of $R$, otherwise the singularity will blow up the integral. The choice $1/2$ of this multiple is artificial and the numerical value of $\tail{u}{x_0}{R}$ does depend on this choice. However, the following lemma shows that the tail space $L_{s,\nu}^f(X)$ is independent of that choice.

\begin{lemma}\label{lemPre:TailFiniteness}
    Let $u\in L_{s,\nu}^f(X)$. Then, for any $0<r<R$ and any $x_0\in X$, it holds that
    \begin{align*}
        \sup_{x\in B_r(x_0)}\int_{X\setminus B_R(x_0)}f\left(\frac{|u(y)|}{d(x,y)^s}\right)\frac{\nu(x,\dy)}{d(x,y)^s}<\infty.
    \end{align*}
\end{lemma}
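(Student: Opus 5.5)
The idea is to reduce the statement to the definition of $L_{s,\nu}^f(X)$ by a covering argument, using only that the integrand is nonnegative and that shrinking the excluded ball enlarges the domain of integration. Fix $x_0\in X$ and $0<r<R$, and set $\rho\coloneqq R-r>0$. The key geometric observation is the following. If $z\in \overline{B_r(x_0)}$, then $B_\rho(z)\subseteq B_R(x_0)$ by the triangle inequality, since $d(y,x_0)\leq d(y,z)+d(z,x_0)<\rho+r=R$. Consequently $X\setminus B_R(x_0)\subseteq X\setminus B_\rho(z)$.

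For every $x\in B_{\rho/2}(z)$ we then have, since the integrand $f\bigl(|u(y)|/d(x,y)^s\bigr)d(x,y)^{-s}$ is nonnegative,
\begin{align*}
    \int_{X\setminus B_R(x_0)}f\left(\frac{|u(y)|}{d(x,y)^s}\right)\frac{\nu(x,\dy)}{d(x,y)^s}\leq\int_{X\setminus B_\rho(z)}f\left(\frac{|u(y)|}{d(x,y)^s}\right)\frac{\nu(x,\dy)}{d(x,y)^s}.
\end{align*}
Taking the essential supremum over $x\in B_{\rho/2}(z)$, the left-hand side is bounded by $\tail{u}{z}{\rho}$, which is finite because $u\in L_{s,\nu}^f(X)$ and the definition of the tail space allows arbitrary centres and radii. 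So on each small ball $B_{\rho/2}(z)$ with $z\in\overline{B_r(x_0)}$ the desired quantity is finite.

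It remains to pass from these small balls to all of $B_r(x_0)$. By Lemma \ref{lemPre:Properness}, the closed ball $\overline{B_r(x_0)}$ is compact. Hence the open cover $\{B_{\rho/2}(z)\}_{z\in\overline{B_r(x_0)}}$ admits a finite subcover $B_{\rho/2}(z_1),\dots,B_{\rho/2}(z_N)$. An essential supremum over a finite union of sets is the maximum of the essential suprema over the individual sets. Therefore
\begin{align*}
    \sup_{x\in B_r(x_0)}\int_{X\setminus B_R(x_0)}f\left(\frac{|u(y)|}{d(x,y)^s}\right)\frac{\nu(x,\dy)}{d(x,y)^s}\leq\max_{1\leq i\leq N}\tail{u}{z_i}{R-r}<\infty.
\end{align*}
There is no real obstacle in this argument. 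The only points needing care are:
\begin{itemize}
    \item choosing the centres $z_i$ inside $\overline{B_r(x_0)}$, so that $B_\rho(z_i)\subseteq B_R(x_0)$ holds for every $i$;
    \item treating the supremum as a $\mu$-essential supremum, which is compatible with finite unions.
\end{itemize}
If one prefers to avoid properness, a finite cover can instead be obtained from the doubling property via Lemma \ref{lemPre:LowerMassBound}, by taking a maximal $\rho/2$-separated set in $B_r(x_0)$.
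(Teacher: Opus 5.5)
Your proof is correct and follows essentially the same approach as the paper: set $\rho=R-r$, use properness to cover $B_r(x_0)$ by finitely many balls $B_{\rho/2}(z_i)$, note that $X\setminus B_R(x_0)\subseteq X\setminus B_\rho(z_i)$, and bound the supremum by $\max_i \tail{u}{z_i}{\rho}$. The only cosmetic difference is that the paper takes its centres in the open ball $B_r(x_0)$, whereas you take them in $\overline{B_r(x_0)}$; both choices work.
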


\begin{proof}
    Let $\rho=R-r$. By Lemma \ref{lemPre:Properness}, we can find finitely many points $x_1,\ldots,x_n\in B_r(x_0)$ such that
    \begin{align*}
        \bigcup_{i=1}^n B_{\rho/2}(x_i)\supseteq B_r(x_0).
    \end{align*}
    Since $u\in L_{s,\nu}^f(X)$, we know that
    \begin{align*}
        T_i\coloneqq\tail{u}{x_i}{\rho}=\sup_{x\in B_{\rho/2}(x_i)}\int_{X\setminus B_\rho(x_i)}f\left(\frac{|u(y)|}{d(x,y)^s}\right)\frac{\nu(x,\dy)}{d(x,y)^s}<\infty.
    \end{align*} 
    Moreover, note that if $y\in X\setminus B_R(x_0)$, then 
    \begin{align*}
        d(y,x_i)\geq d(y,x_0)-d(x_0,x_i)\geq R-r=\rho
    \end{align*}
    so that
    \begin{align*}
        \sup_{x\in B_{\rho/2}(x_i)}\int_{X\setminus B_R(x_0)}f\left(\frac{|u(y)|}{d(x,y)^s}\right)\frac{\nu(x,\dy)}{d(x,y)^s}\leq\sup_{x\in B_{\rho/2}(x_i)}\int_{X\setminus B_\rho(x_i)}f\left(\frac{|u(y)|}{d(x,y)^s}\right)\frac{\nu(x,\dy)}{d(x,y)^s}=T_i
    \end{align*}
    holds true for all $i$. Therefore, we can take the maximum with respect to $i$, which yields
    \begin{align*}
        \sup_{x\in B_r(x_0)}\int_{X\setminus B_R(x_0)}f\left(\frac{|u(y)|}{d(x,y)^s}\right)\frac{\nu(x,\dy)}{d(x,y)^s}\leq\max\{T_1,\ldots,T_n\}<\infty.
    \end{align*}
\end{proof}

\begin{defi}\label{defPre:WeakSolution}
    Let $\Omega\subseteq X$ be an open set. We say that $u\in W^{s,F,\nu}(\Omega)\cap L_{s,\nu}^f(X)$ is a weak supersolution to $\L u=0$ in $\Omega$ if, for any $\varphi\in W_0^{s,F,\nu}(\Omega)$ with $\varphi\geq 0$,
    \begin{align}\label{eqPre:Test}
        \int_X\int_X h\left(x,y,\frac{u(x)-u(y)}{d(x,y)^s}\right)\frac{\varphi(x)-\varphi(y)}{d(x,y)^s}\nu(x,\dy)\mu(\dx)\geq 0.
    \end{align}
    Similarly, we say that $u$ is a weak subsolution to $\L u=0$ in $\Omega$ if the opposite inequality holds true for any $\varphi$ as above. If $u$ is both a weak subsolution and a weak supersolution to $\L u=0$ in $\Omega$, then $u$ is called a weak solution to $\L u=0$ in $\Omega$. Finally, a function $u\in W_\loc^{s,F,\nu}(\Omega)\cap L_{s,\nu}^f(X)$ is called a local weak solution to $\L u=0$ in $\Omega$ if $u$ is a weak solution to $\L u=0$ in $U$ for any open set $U\Subset\Omega$.
\end{defi}

\subsection{Assumptions on the interaction measure}\label{sec:Assumptions}

Let us introduce the assumptions on $\nu$ that we will assume throughout this work. For the presented proofs, these assumptions are very general and natural in view of our proofs. We will comment on the assumptions afterwards and will also derive useful consequences. The assumptions are as follows:
\begin{itemize}
    \item \emph{(Symmetry)} For any Borel sets $A,B\subseteq X$ with $\dist(A,B)>0$,
    \begin{align}\label{eqPre:SymmetryAssumption}
        \int_A\int_B\nu(x,\dy)\mu(\dx)=\int_B\int_A\nu(x,\dy)\mu(\dx).\tag{A1}
    \end{align}
    \item \emph{(Poincar\'{e} inequality)} There exist some $C_1>0$, $\lambda\geq 1$, $R_0>0$ and $t\in[0,s)$ such that, for any $0<R\leq R_0$, $x_0\in X$ and every Borel measurable function $u\colon B_{\lambda R}(x_0)\to\R$,
    \begin{align}\label{eqPre:PoincareAssumption}
        \quad\quad\enspace\int_{B_R(x_0)}|u-(u)_{B_R(x_0)}|^p\d\mu\leq C_1R^{tp}\int_{B_{\lambda R}(x_0)}\int_{B_{\lambda R}(x_0)}\frac{|u(x)-u(y)|^p}{d(x,y)^{tp}}\nu(x,\dy)\mu(\dx).\tag{A2}
    \end{align}
    \item There exists some $C_2>0$ such that, for any $0<R\leq R_0$ and $x\in X$,
    \begin{align}\label{eqPre:TailIntegrabilityAssumption}
        \int_{X\setminus B_R(x)}\frac{\nu(x,\dy)}{d(x,y)^{sp}}\leq\frac{C_2}{R^{sp}}.\tag{A3}
    \end{align}
\end{itemize}

\begin{rem}
    If both sides of \eqref{eqPre:SymmetryAssumption} are infinite, the identity is still considered to be true. Similarly, if the right hand side of \eqref{eqPre:PoincareAssumption} is infinite, the inequality remains true.
\end{rem}

Let us first remark that under assumptions \eqref{eqPre:SymmetryAssumption} and \eqref{eqPre:TailIntegrabilityAssumption}, one can show that Definition \ref{defPre:WeakSolution} is well-defined.

It is easy to see that the assumption \eqref{eqPre:TailIntegrabilityAssumption} is weaker than the one given in \eqref{eqIntro:TailAssumption}. In view of Remark \ref{remSobPoi:Remark1Poincare} below and since we are allowed to choose $t=0$ in \eqref{eqPre:PoincareAssumption}, our set of assumptions is indeed weaker than the one presented in the introduction. However, the advantage of the assumptions in the introduction is that they are completely independent of the growth function and of its growth indices.

For convenience, we provide a list of important examples of kernels $\nu$. Apart from the second one, no example is covered in the existing works \cite{BKO23,CKW22} about Orlicz growth equations.

\begin{ex}
    \begin{enumerate}[label=(\alph*)]
        \item The measure 
        \begin{align*}
            \nu(x,\dy)=\frac{\mu(\dy)}{\mu(B_{d(x,y)}(x))+\mu(B_{d(x,y)}(y))}
        \end{align*}
        is the canonical kernel on doubling metric measure spaces.
        \item On $\Rd$, the canonical kernel boils down to
        \begin{align}\label{eqPre:ExampleMeasure}
            \nu(x,\dy)=\frac{\dy}{|x-y|^d},
        \end{align}
        which, among others, induces the fractional $p$-Laplace operator.
        \item On the integer lattice $\mathbb{Z}^d$ with the discrete Euclidean metric and the counting measure $\mu$, the corresponding fractional $p$-Laplace operator can be similarly defined with the kernel
        \begin{align*}
            \nu(x,\dy)=\indicator_{x\neq y}\frac{\mu(\dy)}{|x-y|^d}.
        \end{align*}
        \item Let $\Omega\subseteq\Rd$ be an open domain with Lipschitz boundary and equip its closure $\overline{\Omega}$ with the restriction of the Euclidean metric and the Lebesgue measure. Then $\nu$ as in \eqref{eqPre:ExampleMeasure} induces, for instance, the regional $p$-Laplace operator, given by
        \begin{align*}
            (-\Delta_p)_\Omega^su(x)=\pv\int_\Omega\frac{|u(x)-u(y)|^{p-2}(u(x)-u(y))}{|x-y|^{d+sp}}\dy.
        \end{align*}
        \item For $i\in\{1,\ldots,d\}$ and $x\in\Rd$ let $e_i$ denote the $i$-th unit vector and
        \begin{align*}
            A_i(x)=\{x+\lambda e_i\mid\lambda\in\R\}\subseteq\Rd
        \end{align*}
        denote the $i$-th coordinate axis shifted towards $x$. Let $\mathcal{H}^1$ be the $1$-dimensional Hausdorff measure. Then 
        \begin{align*}
            \nu(x,\dy)=\sum_{i=1}^d\frac{\left.\mathcal{H}^1(\dy)\right\vert_{A_i(x)}}{|x-y|}
        \end{align*}
        can create the sum of $1$-dimensional Laplace operators
        \begin{align}\label{eqPre:SumOfOneDimLaplacians}
            \L=(-\partial_{x_1x_1})^s+\ldots+(-\partial_{x_dx_d})^s.
        \end{align}
        \item A discrete version of \eqref{eqPre:SumOfOneDimLaplacians} can be constructed by
        \begin{align*}
            \nu(x,\dy)=\sum_{i=1}^d\sum_{j=1}^\infty\left(\delta_{x+2^{-j}e_i}(\dy)+\delta_{x-2^{-j}e_i}(\dy)\right),
        \end{align*}
        where $\delta_{x_0}$ is the Dirac measure at $x_0$.
        \item Given $v\in\mathbb{S}^{d-1}$ and $\theta\in[0,1)$, define a double-cone
        \begin{align*}
            C=\bigset{h\in\Rd}{\left|\frac{h}{|h|}\cdot v\right|\geq\theta}
        \end{align*}
        and the measures
        \begin{align*}
            \nu_1(x,\dy)=\indicator_C(x-y)\frac{\dy}{|x-y|^d} \and \nu_2(x,\dy)=\indicator_{C\cap B_1(0)}(x-y)\frac{\dy}{|x-y|^d}
        \end{align*}
        on $\Rd$. Then both $\nu_1$ and $\nu_2$ induce admissible operators for this article, i.e.\ any weak solution is locally Hölder continuous.
    \end{enumerate}
\end{ex}

Note that, in all of the given examples, the order of fractional differentiability $s$ is not encoded in $\nu$ itself. The only connection to $s$ is that $\nu$ and $s$ have to be compatible in the sense of assumptions \eqref{eqPre:PoincareAssumption} and \eqref{eqPre:TailIntegrabilityAssumption}, which exclude no $s\in(0,1)$ for the provided examples. Since all of the above examples satisfy even \eqref{eqIntro:PoincareAssumption} and \eqref{eqIntro:TailAssumption}, they can be attached with any admissible function $h$ of non-standard growth in our setting.

Now, we will formulate our main result Theorem \ref{thmIntro:Hoelder} in full generality.

\begin{thm}\label{thmPre:Hoelder}
    Let $\Omega\subseteq X$ be an open set and $u\in W_\loc^{s,F,\nu}(\Omega)\cap L_{s,\nu}^f(X)\cap L_\loc^\infty(\Omega)$ be a locally bounded local weak solution to the equation $\L u=0$ in $\Omega$. Then $u\in C_\loc^{0,\alpha}(\Omega)$ for some $\alpha=\alpha(\C,s,t,p,q,\kappa,\lambda,C_1,C_2)\in(0,1)$. Furthermore, for any $x_0\in X$ and $0<R\leq R_0$ with $B_R(x_0)\Subset\Omega$, it holds
    \begin{align*}
        [u]_{C^{0,\alpha}(B_{R/4}(x_0))}\leq\frac{C}{R^\alpha}\left(\|u\|_{L^\infty(B_R(x_0))}+\Tail{u}{x_0}{R}\right)
    \end{align*}
    for some $C=C(\C,s,t,p,q,\kappa,\lambda,C_1,C_2)>0$.
\end{thm}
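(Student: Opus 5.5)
The plan is to follow the De Giorgi--Nash--Moser scheme sketched in the introduction, in the order of Sections 3--6.

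\textbf{Step 1: self-improved Poincar\'{e} inequality.} From \eqref{eqPre:PoincareAssumption}, the doubling property (Lemma \ref{lemPre:LowerMassBound}) and the $5B$-covering lemma (Lemma \ref{lemPre:5BCovering}), I first derive an Orlicz version of the Poincar\'{e} inequality. Using Corollary \ref{corPre:ConvexityComposition}, the map $t\mapsto F(t^{1/p})$ is equivalent to a convex $G$. Applying \eqref{eqPre:PoincareAssumption} and then Jensen with $G$ should give
\begin{align*}
    \fint_{B_R}F\left(\frac{|u-(u)_{B_R}|}{R^s}\right)\d\mu\lesssim\frac{1}{\mu(B_R)}\int_{B_{\lambda R}}\int_{B_{\lambda R}}F\left(\frac{|u(x)-u(y)|}{d(x,y)^s}\right)\nu(x,\dy)\mu(\dx).
\end{align*}
The gap $t<s$ absorbs the factor $(d(x,y)/R)^{(s-t)p}\leq \lambda^{(s-t)p}$. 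Next I upgrade this to a Sobolev--Poincar\'{e} inequality with exponent $\gamma>1$ on the left-hand side. To do this, I run a telescoping/chaining argument over dyadic balls $B_{2^{-k}R}(x)$ at Lebesgue points (Lemma \ref{lemPre:LebesgueDifferentiation}). I bound a fractional maximal function of the energy density and use a weak-type estimate from the covering lemma; the $R^{s}$ factor versus $\mu(B_r)\gtrsim r^Q$ yields the gain, in the spirit of Haj\l asz--Koskela. This is where $Q\geq1$ enters.

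\textbf{Step 2: energy estimates.} I test \eqref{eqPre:Test} with $\varphi=w_\pm\psi^q$, where $w=(u-k)_\pm$ and $\psi$ is a Lipschitz cutoff. The comparability \eqref{eqPre:Comparability}, Lemma \ref{lemPre:Young} and symmetry \eqref{eqPre:SymmetryAssumption} give a Caccioppoli inequality. Its local energy of $w$ is controlled by $\int F(w|\psi(x)-\psi(y)|/d^s)$, which is handled by \eqref{eqPre:TailIntegrabilityAssumption} near the diagonal using $F(ab)\le a^pF(b)$. The far part is bounded by $\tail{w_-}{x_0}{R}$, with the $\sup$ over $x$ in the definition of the tail being crucial for non-translation-invariant $\nu$. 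Testing with $\varphi=(u+d)^{1-p}\psi^q$-type functions adapted to $F$, namely $\psi^q/f((u+d)/R^s)$-like, gives a logarithmic estimate for supersolutions.

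\textbf{Step 3: expansion of positivity and oscillation decay.} This is the main obstacle. For a critical mass lemma, I run a De Giorgi iteration on levels $k_j\downarrow$ using Caccioppoli together with the Sobolev--Poincar\'{e} gain $\gamma$. Because the growth is of $(p,q)$ type, I work with the quantity $\fint F(w/R^s)$ normalized by $F(k/R^s)$ to keep homogeneity, using the $p$- and $q$-scaling bounds. The smallness threshold then depends only on the data, provided the tail term is bounded by a small multiple of $k$. Separately, the log estimate combined with the plain Poincar\'{e} inequality shows that if $u\geq k$ on a fixed portion of $B_R$, then $\{u<\delta k\}$ has small relative measure. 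Together these give $u\ge \delta k$ on $B_{R/2}$.

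\textbf{Step 4: Hölder regularity.} Finally, I build $M_j\downarrow$ and $m_j\uparrow$ with $M_j-m_j=\omega_j = 4^{-j\alpha}\omega_0$ on balls $B_{4^{-j}R/4}$. Here $\omega_0\sim\|u\|_{L^\infty(B_R)}+\Tail{u}{x_0}{R}$. Applying expansion of positivity to $u-m_j$ or $M_j-u$, whichever occupies half of the ball, gives the decrement. The tail of $u-m_j$ over $X\setminus B_{r_j}$ is split into annuli $B_{r_{i-1}}\setminus B_{r_i}$, where $|u-m_j|\le\omega_{i}$, plus the exterior of $B_R$. Each annulus is estimated by \eqref{eqPre:TailIntegrabilityAssumption} and $f(ab)\lesssim a^{q-1}f(b)$, producing a geometric sum $\sum_i 4^{(i-j)(\alpha(q-1)-sp)}$-type term. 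This sum is small once $\alpha$ is small, so the tail stays below $\varepsilon\omega_j$. The standard covering over centres in $B_{R/4}$ then yields the estimate.
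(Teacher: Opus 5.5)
\textbf{Verdict.} Your architecture matches the paper's: Sobolev--Poincar\'e via chaining and a weak-type bound for a fractional maximal function, Caccioppoli and logarithmic estimates, a De Giorgi critical-mass lemma, expansion of positivity, and an inductive oscillation decay with an annular splitting of the tail. However, the step that closes the induction in Step 4 fails as written.

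\textbf{The gap in Step 4.} You keep a fixed ratio $4$ between consecutive radii and use the crude bound $|u-m_j|\le\omega_i$ on the annuli. Your geometric sum, correctly signed, is $\sum_{k\ge1}4^{-k(sp-\alpha(q-1))}$ with $k=j-i$. It is bounded below by its first term, which tends to $4^{-sp}$ as $\alpha\to0$, so it is \emph{not} small for small $\alpha$. Concretely, your estimate for the nearest annulus $B_{r_{j-1}}\setminus B_{r_j}$ is $C4^{\alpha(q-1)}r_j^{-s}f(\omega_j/r_j^s)$, and for small $j$ the piece $X\setminus B_R$ is of the same size; neither becomes small as $\alpha\to0$. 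On the other hand, Lemma \ref{lemEoP:ExpansionOfPositivity} with $\varepsilon=\frac12$ only applies if the tail is at most $r^{-s}f(\delta\omega_j/r^s)$. Since $\delta$ is small and data-dependent, this threshold is $\le\frac qp\delta^{p-1}r^{-s}f(\omega_j/r^s)$. So you cannot exclude the alternative ``tail too large'', and the induction does not close. A fixed ratio $4$ also conflicts with the room $r\le R/(8\lambda)$ that the dilation in \eqref{eqPre:PoincareAssumption} forces in expansion of positivity.

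\textbf{How the paper closes it.} The paper makes the ratio a small parameter: $r_j=\sigma^jR$, $\omega_j=\sigma^{\alpha j}\omega_0$, $\alpha\le\frac{sp}{2(p-1)}$. For $v=u-\inf_{B_{r_j}(x_0)}u$ (or its reflection) this gives
\begin{align*}
    r_{j+1}^s\tail{v_-}{x_0}{r_j}\le C\sigma^{sp/2}\delta^{1-q}f\left(\frac{\delta\omega_j}{r_{j+1}^s}\right).
\end{align*}
One first fixes $\sigma\le\frac{1}{8\lambda}$ so that $C\sigma^{sp/2}\le\delta^{q-1}$, and only afterwards takes $\alpha\le\log(1-\delta/2)/\log\sigma$. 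Alternatively, you could keep a fixed ratio, but then you need two changes. First, use the sharper bound $(u-m_j)_-\le m_j-m_i\le\omega_i-\omega_j$ on $B_{r_i}$; the resulting terms do vanish as $\alpha\to0$. Second, put a large constant into $\omega_0$ so that the contribution from outside $B_R$ is small.

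\textbf{Smaller issues in Steps 1--3.}

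In Step 1, \eqref{eqPre:PoincareAssumption} followed by Jensen only yields
\[
F\big(R^{-s}(\fint_{B_R}|u-(u)_{B_R}|^p\d\mu)^{1/p}\big)\lesssim\fint_{B_{\lambda R}}\int_{B_{\lambda R}}F\big(\tfrac{|u(x)-u(y)|}{d(x,y)^s}\big)\nu(x,\dy)\mu(\dx).
\]
Its left-hand side is \emph{smaller} than $\fint_{B_R}F(|u-(u)_{B_R}|/R^s)\d\mu$, since moving $F$ inside the $\mu$-average is the wrong direction for Jensen. So the Orlicz Poincar\'e inequality itself already needs the chaining/maximal-function argument, namely Theorem \ref{thmSobPoi:SobolevPoincare} with $\theta=1$; the weaker per-ball bound is what gets fed into the chain.

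Also in Step 1, the role of $t<s$ is not to absorb a bounded factor. The mass $\nu(x,B_R(x_0))$ may be infinite. What makes Jensen applicable is $\int_{B_R(x_0)}(d(x,y)/R)^{(s-t)p}\nu(x,\dy)\le C$, from Lemma \ref{lemPre:distance_integrability_nu} and \eqref{eqPre:TailIntegrabilityAssumption}, which turns the reweighted kernel into a finite measure.

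For Step 3, pairing the logarithmic estimate with \eqref{eqPre:PoincareAssumption} requires the $p$-th power, order-$t$ form of Lemma \ref{lemEnergy:LogEstimate}. The usual linear, order-zero log estimate does not suffice, and Step 2 does not say which form your test function yields.
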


If the solution is globally bounded, we get the following result.

\begin{cor}\label{corPre:Hoelder}
    Let $\Omega\subseteq X$ be an open set and $u\in W_\loc^{s,F,\nu}(\Omega)\cap L^\infty(X)$ be a bounded local weak solution to $\L u=0$ in $\Omega$. Then $u\in C_\loc^{0,\alpha}(\Omega)$ for some $\alpha=\alpha(\C,s,t,p,q,\kappa,\lambda,C_1,C_2)\in(0,1)$. Furthermore, for any compact subset $K\subseteq\Omega$, it is true that
    \begin{align*}
        \|u\|_{C^{0,\alpha}(K)}\leq C\|u\|_{L^\infty(X)}
    \end{align*}
    for some $C=C(\C,s,t,p,q,\kappa,\lambda,C_1,C_2,\dist(K,\partial\Omega),R_0)>0$.
\end{cor}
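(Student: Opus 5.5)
The corollary will follow from Theorem \ref{thmPre:Hoelder} by a covering argument. The only extra work is to bound the tail by $\|u\|_{L^\infty(X)}$ and to pass from local seminorm bounds on small balls to a global Hölder norm on $K$.

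\emph{Step 1: preliminary facts.} Since $u\in L^\infty(X)$, it belongs to $L^\infty_\loc(\Omega)$. We also check that $u\in L^f_{s,\nu}(X)$, so that Theorem \ref{thmPre:Hoelder} applies. Fix $x_0\in X$ and $R\le R_0$, and set $M=\|u\|_{L^\infty(X)}$. For $x\in B_{R/2}(x_0)$ and $y\notin B_R(x_0)$ we have $d(x,y)\ge R/2$, so $M/d(x,y)^s\le 2^sM/R^s$. Monotonicity of $f$ then gives
\begin{align*}
\int_{X\setminus B_R(x_0)}f\Big(\frac{|u(y)|}{d(x,y)^s}\Big)\frac{\nu(x,\dy)}{d(x,y)^s}\le f\Big(\frac{2^sM}{R^s}\Big)\int_{X\setminus B_{R/2}(x)}\frac{\nu(x,\dy)}{d(x,y)^s}.
\end{align*}
The remaining integral must be bounded using only \eqref{eqPre:TailIntegrabilityAssumption}, which carries the weight $d^{-sp}$ instead of $d^{-s}$. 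This is the main obstacle. I would split $X\setminus B_{R/2}(x)$ into $B_{R_0}(x)\setminus B_{R/2}(x)$ and $X\setminus B_{R_0}(x)$.
\begin{itemize}
\item On the near part, $d^{-s}=d^{-sp}d^{s(p-1)}\le R_0^{s(p-1)}d^{-sp}$, so \eqref{eqPre:TailIntegrabilityAssumption} bounds this part by $C_2R_0^{s(p-1)}(R/2)^{-sp}$.
\item On the far part, if necessary I would read the tail bound in the form \eqref{eqIntro:TailAssumption} used in the introduction. Otherwise I would cover it by dyadic annuli $B_{2^{k+1}R_0}(x)\setminus B_{2^kR_0}(x)$ and apply \eqref{eqPre:TailIntegrabilityAssumption} at scale $R_0$ at points near the annuli. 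This produces a summable series, or at least a finite bound depending on $R_0$.
\end{itemize}
In either case the result is $\tail{u}{x_0}{R}\le C(R,R_0)f(2^sM/R^s)$. Using the doubling property of $f$ (item (c) of the lemma on $f$), this yields $\Tail{u}{x_0}{R}\le C(R,R_0)M$. For the estimate we only need this at one fixed radius $R$, chosen comparable to $\min\{\dist(K,\partial\Omega),R_0\}$. There the constant depends only on the admissible data, and the scaling of $f^{-1}$ and the factor $R^s$ produce at most a factor $(R_0/R)^{\cdots}$.

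\emph{Step 2: local estimate.} Set $R=\min\{\dist(K,\partial\Omega)/2,R_0\}$; if $\Omega=X$, take $R=R_0$. For every $x_0\in K$ the ball $B_R(x_0)$ is compactly contained in $\Omega$, by Lemma \ref{lemPre:Properness}. Theorem \ref{thmPre:Hoelder} and Step 1 then give
\begin{align*}
[u]_{C^{0,\alpha}(B_{R/4}(x_0))}\le \frac{C}{R^\alpha}\big(M+C(R,R_0)M\big)\le C'M.
\end{align*}

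\emph{Step 3: global Hölder norm on $K$.} Take $x,y\in K$. If $d(x,y)<R/4$, then both points lie in $B_{R/4}(x)$, and Step 2 gives $|u(x)-u(y)|\le C'Md(x,y)^\alpha$. If $d(x,y)\ge R/4$, then directly $|u(x)-u(y)|\le 2M\le 2M(4/R)^\alpha d(x,y)^\alpha$. Finally $\sup_K|u|\le M$; here we use the Hölder continuous representative, which agrees with $u$ almost everywhere, so its supremum is bounded by the essential supremum. Summing these bounds gives $\|u\|_{C^{0,\alpha}(K)}\le CM$, with $C$ depending on the data, on $\dist(K,\partial\Omega)$ and on $R_0$. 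Local Hölder continuity in $\Omega$ follows because every compact $K\subseteq\Omega$ is admissible.
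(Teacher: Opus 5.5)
Your Steps 2 and 3 are fine and amount to the ``standard arguments'' the paper leaves implicit. Step 1, however, has a genuine gap in the far-field part of the tail bound.

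Once you use monotonicity of $f$ to replace $f(|u(y)|/d(x,y)^s)$ by the constant $f(2^sM/R^s)$, you are left with $\int_{X\setminus B_{R/2}(x)}\nu(x,\dy)/d(x,y)^s$. The assumptions \eqref{eqPre:SymmetryAssumption}--\eqref{eqPre:TailIntegrabilityAssumption} give no control of this integral. The dyadic route does not close. At radius $R_0$, \eqref{eqPre:TailIntegrabilityAssumption} only yields $\nu(x,B_{2^{k+1}R_0}(x)\setminus B_{2^kR_0}(x))\le C_2 2^{(k+1)sp}$. So the $k$-th annulus contributes up to $C_2 2^{sp}R_0^{-s}2^{ks(p-1)}$, which is a divergent series for $p>1$. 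Applying the assumption at other points says nothing about the fixed measure $\nu(x,\cdot)$.

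In fact the integral can be infinite. On $\R$ with Lebesgue measure, let $\nu(x,\dy)=|x-y|^{-1}\dy$ for $|x-y|\le 1$ and $\nu(x,\dy)=|x-y|^{\beta-1}\dy$ for $|x-y|>1$, where $s\le\beta<sp$. This satisfies \eqref{eqPre:SymmetryAssumption}--\eqref{eqPre:TailIntegrabilityAssumption} with $t=0$, $\lambda=1$, $R_0=1/2$, yet $\int_{\{|x-y|>1\}}\nu(x,\dy)/|x-y|^s=\infty$. Falling back on \eqref{eqIntro:TailAssumption} is not legitimate either. It is strictly stronger than \eqref{eqPre:TailIntegrabilityAssumption}, so you would be proving the corollary under an extra hypothesis.

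The missing idea is to keep the decay of $f$ rather than freezing its argument. The lower growth index $p$ is exactly what converts the weight $d^{-s}$ into the weight $d^{-sp}$ that \eqref{eqPre:TailIntegrabilityAssumption} controls. For $x\in B_{R/2}(x_0)$ and $y\notin B_R(x_0)$, write $M/d(x,y)^s=a\,M/R^s$ with $a=(R/d(x,y))^s\in(0,2^s]$. On this range the scaling properties of $f$ give $f(ab)\le\frac{q}{p}2^{s(q-1)}a^{p-1}f(b)$, hence
\begin{align*}
    f\left(\frac{M}{d(x,y)^s}\right)\frac{1}{d(x,y)^s}\leq CR^{s(p-1)}f\left(\frac{M}{R^s}\right)\frac{1}{d(x,y)^{sp}}.
\end{align*}
Then \eqref{eqPre:TailIntegrabilityAssumption} at radius $R/2$ gives $\tail{u}{x_0}{R}\le CR^{-s}f(M/R^s)$. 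By the scaling of $f^{-1}$, this yields $\Tail{u}{x_0}{R}\le CM$ with $C=C(s,p,q,C_2)$ independent of $R$. For $R>R_0$, the same computation at scale $R_0$ shows finiteness, so $u\in L^f_{s,\nu}(X)$. This is precisely the paper's argument.

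Note also that bounding $|u(y)|$ by $M$ inside a $\nu(x,\dy)$-integral is not automatic, since $\nu(x,\cdot)$ may be singular with respect to $\mu$. Lemma \ref{lemPre:SupEstimate} provides this, using that $B_{R/2}(x_0)$ and $X\setminus B_R(x_0)$ are separated. With Step 1 repaired this way, your Steps 2 and 3 finish the proof. The dependence on $\dist(K,\partial\Omega)$ and $R_0$ then enters only through powers of $R=\min\{\dist(K,\partial\Omega)/2,R_0\}$.
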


Let us deduce some useful consequences of the assumptions \eqref{eqPre:SymmetryAssumption} and \eqref{eqPre:TailIntegrabilityAssumption}.

\begin{lemma}\label{lemPre:SymmetryFunctions}
    Let $A,B\subseteq X$ be two Borel sets with $\dist(A,B)>0$. Let $g\colon X\times X\to\R$ be a $(\B(X)\otimes\B(X))$-measurable function with
    \begin{align}\label{eqPre:SymmetryFunctionsIntegrability}
        \int_A\int_B|g(x,y)|\nu(x,\dy)\mu(\dx)<\infty.
    \end{align}
    Then
    \begin{align}\label{eqPre:SymmetryFunctions}
        \int_A\int_B g(x,y)\nu(x,\dy)\mu(\dx)=\int_B\int_A g(y,x)\nu(x,\dy)\mu(\dx).
    \end{align}
\end{lemma}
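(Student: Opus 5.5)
The plan is to go from indicator functions to general integrable $g$ by the usual measure-theoretic extension. Throughout, write $\Theta(E):=\int_X\int_X\indicator_E(x,y)\,\nu(x,\dy)\mu(\dx)$ for $E\in\B(X)\otimes\B(X)$. Since $\nu$ is a kernel, $x\mapsto\nu(x,E_x)$ is measurable for every product-measurable $E$, so $\Theta$ is a well-defined Borel measure on $X\times X$. Let $\sigma(x,y)=(y,x)$ be the swap map. After rewriting the right-hand side of \eqref{eqPre:SymmetryFunctions} by renaming variables, the claim reads
\begin{align*}
    \int_{A\times B} g\,\d\Theta=\int_{B\times A} g\circ\sigma\,\d\Theta .
\end{align*}
So it suffices to show that the restrictions $\Theta|_{A\times B}$ and $\sigma_\#(\Theta|_{B\times A})$ coincide as measures on $A\times B$.

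First step: rectangles. Take $E=A'\times B'$ with $A'\subseteq A$ and $B'\subseteq B$ Borel. Then $\dist(A',B')\geq\dist(A,B)>0$, so \eqref{eqPre:SymmetryAssumption} gives $\Theta(A'\times B')=\Theta(B'\times A')=\sigma_\#\Theta|_{B\times A}(A'\times B')$. Both sides may be infinite, and the remark after the assumptions allows that.

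Second step: extend from rectangles to all Borel subsets of $A\times B$. Rectangles form a $\pi$-system generating the product $\sigma$-algebra on $A\times B$. Dynkin's uniqueness theorem requires the two measures to be $\sigma$-finite on this $\pi$-system, with an exhausting sequence of rectangles of finite measure. This is the main obstacle, because $\Theta(A\times B)$ may be infinite, and $\nu(x,\cdot)$ could even be non-$\sigma$-finite in a useless way. I would first try to use the integrability assumption \eqref{eqPre:SymmetryFunctionsIntegrability} instead of $\sigma$-finiteness. Write $g=g_+-g_-$ and handle each part separately. For $g\geq0$, use the layer-cake/simple-function approximation: every nonnegative measurable $g$ is an increasing limit of finite sums $\sum c_k\indicator_{E_k}$ with $E_k=\{g>\cdot\}$. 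Because $\int_{A\times B} g\,\d\Theta<\infty$, each superlevel set $\{g>1/n\}\cap(A\times B)$ has finite $\Theta$-measure. I then need the same finiteness for the swapped measure, which I want to obtain alongside the identity itself. To avoid circularity, I would instead exhaust $A\times B$ by rectangles: the sets $A_k=\{x\in A:\nu(x,B)\leq k\}\cap B_k(x_0)$ and the analogous $B_k=\{y\in B:\nu(y,A)\leq k\}\cap B_k(x_0)$ give rectangles $A_k\times B_k$. These have finite $\Theta$-measure (at most $k\mu(B_k(x_0))$), and by the first step the swapped measure of $A_k\times B_k$ is also finite. On each $A_k\times B_k$ the $\pi$–$\lambda$ theorem then gives equality of the two measures, and monotone convergence extends it to $\bigcup_k A_k\times B_k$.

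The leftover set is $\{x\in A:\nu(x,B)=\infty\}\times B$ together with its swapped counterpart. On the first of these, $g$ must vanish $\Theta$-a.e. for the integral to be finite only if $g$ is bounded below there, which is not guaranteed. So this leftover is the genuinely delicate point. I would handle it by applying the exhaustion argument to the finite measures $g_\pm\,\d\Theta$ on $A\times B$ and $g_\pm\circ\sigma\,\d\Theta$ on $B\times A$ directly. Concretely, I would show that the rectangle identity holds for the weighted measures through monotone class arguments on the rectangles $A'\times B'$ where $\Theta$ is finite, which suffice to generate. Should that fail, I would simply add the standing convention that $\nu(x,B)<\infty$ for $\mu$-a.e. $x\in A$, which holds when $\dist(A,B)>0$ by \eqref{eqPre:TailIntegrabilityAssumption}. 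Indeed, for $x\in A$ we have $\nu(x,B)\leq \sup_{y\in B}d(x,y)^{sp}\cdot C_2\,\dist(A,B)^{-sp}$, which is finite on bounded pieces (split $B$ into bounded annuli if needed).

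With the measure identity established, the integral identity follows for indicators, then for simple functions, then by monotone convergence for $g_\pm$. Finally, the finiteness \eqref{eqPre:SymmetryFunctionsIntegrability} allows subtracting the two parts, which yields \eqref{eqPre:SymmetryFunctions}.
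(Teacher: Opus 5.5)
Your framework is the same as the paper's: the rectangle identity from \eqref{eqPre:SymmetryAssumption}, uniqueness of measures on the $\pi$-system of rectangles, then simple functions, monotone convergence and $g=g_+-g_-$. You also correctly single out $\sigma$-finiteness as the crux. However, you do not resolve it.

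Your exhaustion by $A_k=\{x\in A:\nu(x,B)\leq k\}\cap B_k(x_0)$ fails already for the model kernel $\nu(x,\dy)=|x-y|^{-d}\dy$ on $\R^d$. For $A=B_1(0)$ and $B=\R^d\setminus B_2(0)$ one has $\nu(x,B)=\infty$ for every $x\in A$. So every $A_k$ is empty, and your ``leftover set'' is all of $A\times B$. This is not a corner case: the lemma is applied in the Caccioppoli and logarithmic estimates precisely with $B$ the complement of a ball, which is unbounded in the model case.

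Neither fallback repairs this. The claim that \eqref{eqPre:TailIntegrabilityAssumption} yields $\nu(x,B)<\infty$ for a.e.\ $x\in A$ is false for unbounded $B$ (same example). Imposing it as a standing convention would add a hypothesis that fails in the paper's applications. The weighted-measure variant is circular. Finiteness of $\int_{B\times A}g_\pm\circ\sigma\,\d\Theta$, and the rectangle identity for the weighted measures, are themselves instances of the statement you are proving.

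The missing idea is to let \emph{bounded rectangles} carry the exhaustion, instead of sublevel sets of $\nu(\cdot,B)$. Put $\rho=\min\{\dist(A,B),R_0\}$. For $x\in A\cap B_n(x_0)$, every $y\in B\cap B_n(x_0)$ satisfies $\rho\leq d(x,y)<2n$. So \eqref{eqPre:TailIntegrabilityAssumption} gives $\nu(x,B\cap B_n(x_0))\leq (2n)^{sp}C_2\rho^{-sp}$, and hence
\[
\Theta\bigl((A\cap B_n(x_0))\times(B\cap B_n(x_0))\bigr)\leq C_2(2n)^{sp}\rho^{-sp}\mu(B_n(x_0))<\infty .
\]
These sets are rectangles and they increase to $A\times B$. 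By your first step, the swapped measure assigns them the same finite mass. Hence the uniqueness theorem applies on all of $A\times B$, with no leftover set and no extra assumption.

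This is exactly the paper's argument, phrased there with $\gamma_1,\gamma_2$ on $X\times X$ and the sets $B_n(x_0)\times B_n(x_0)$. Your parenthetical about bounded pieces points in this direction. But what is finite is $\nu(x,B\cap B_n(x_0))$, not $\nu(x,B)$, and it has to enter through the exhausting rectangles.
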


\begin{proof}
    Set $\rho=\dist(A,B)>0$. We proceed by measure-theoretic induction. First, we consider the case where $g=\indicator_D$ is a characteristic function. Define the set
    \begin{align*}
        \mathcal{P}=\{D\in\B(X)\otimes\B(X)\mid \eqref{eqPre:SymmetryFunctions}\ \text{holds true for}\ g=\indicator_D\}
    \end{align*}
    and two Borel measures $\gamma_1,\gamma_2$ on $X\times X$ by
    \begin{align*}
        \gamma_1(D)=\int_A\int_B\indicator_D(x,y)\nu(x,\dy)\mu(\dx) \and \gamma_2(D)=\int_B\int_A\indicator_D(y,x)\nu(x,\dy)\mu(\dx).
    \end{align*}
    Since $\mathcal{P}$ is closed under finite intersections, it forms a \emph{$\pi$-system}. Note that the set of rectangles
    \begin{align*}
        \mathcal{R}=\{E\times F\mid E,F\subseteq\B(X)\}
    \end{align*}
    is contained in $\mathcal{P}$ because
    \begin{align*}
        \int_A\int_B \indicator_{E\times F}(x,y)\nu(x,\dy)\mu(\dx)&=\int_{A\cap E}\int_{B\cap F}\nu(x,\dy)\mu(\dx) \\
        &=\int_{B\cap F}\int_{A\cap E}\nu(x,\dy)\mu(\dx)=\int_B\int_A \indicator_{E\times F}(y,x)\nu(x,\dy)\mu(\dx)
    \end{align*}
    by \eqref{eqPre:SymmetryAssumption}. Moreover, $\mathcal{R}$ generates the product $\sigma$-algebra of $X\times X$ and thus, the uniqueness of measure theorem (\cite[Proposition 1.15]{ADM11}) implies $\gamma_1=\gamma_2$ on $\B(X)\times\B(X)$, provided $\gamma_1$ is $\sigma$-finite. In fact, it follows from \eqref{eqPre:TailIntegrabilityAssumption} that
    \begin{align*}
        \gamma_1(B_n(x_0)\times B_n(x_0))\leq (2n)^{sp}\int_{B_n(x_0)}\int_{X\setminus B_\rho(x)}\frac{\nu(x,\dy)}{d(x,y)^{sp}}\mu(\dx)\leq C_2(2n)^{sp}\frac{\mu(B_n(x_0))}{\rho^{sp}}<\infty
    \end{align*}
    for an arbitrary $x_0\in X$ and any integer $n\geq 1$, which proves the $\sigma$-finiteness. In other words, \eqref{eqPre:SymmetryFunctions} is proven for characteristic functions. By linearity of the integral, \eqref{eqPre:SymmetryFunctions} also follows for simple functions of the form
    \begin{align*}
        g=\sum_{i=1}^N a_i\indicator_{D_i}, \quad N\in\N,\ a_i\geq 0,\ D_i\in\B(X),
    \end{align*}
    and monotone convergence theorem gives \eqref{eqPre:SymmetryFunctions} for any nonnegative $(\B(X)\otimes\B(X))$-measurable function. For a general measurable function $g$, we make use of \eqref{eqPre:SymmetryFunctionsIntegrability}. First we can apply the previous observation to $|g|$ to obtain that
    \begin{align*}
        \int_B\int_A|g(y,x)|\nu(x,\dy)\mu(\dx)<\infty
    \end{align*}
    as well and by applying \eqref{eqPre:SymmetryFunctions} to $g_+$ and $g_-$ separately, the result follows by subtracting both equations.
\end{proof}

\begin{lemma}\label{lemPre:distance_integrability_nu}
    For any $\alpha>0$, $x_0\in X$, $0<R\leq R_0$, and any $x\in B_R(x_0)$, it holds
    \begin{align*}
        \int_{B_R(x_0)}d(x,y)^\alpha\nu(x,\dy)\leq CR^\alpha
    \end{align*}
    for some $C=C(s,p,C_2,\alpha)>0$.
\end{lemma}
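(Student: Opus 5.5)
The plan is a dyadic annular decomposition of $B_R(x_0)$ around the point $x$, with assumption \eqref{eqPre:TailIntegrabilityAssumption} controlling each annulus. Fix $x\in B_R(x_0)$. Since $x\in B_R(x_0)$, every $y\in B_R(x_0)$ satisfies $d(x,y)<2R$, so $B_R(x_0)\subseteq B_{2R}(x)$. It therefore suffices to bound
\begin{align*}
    \int_{B_{2R}(x)}d(x,y)^\alpha\nu(x,\dy).
\end{align*}
Recall the standing assumption $\nu(x,\{x\})=0$. It holds for $\mu$-almost every $x$, which is all that is used later; if needed, one restricts to such $x$. Under it, the punctured ball $B_{2R}(x)\setminus\{x\}$ is the disjoint union of the annuli
\begin{align*}
    A_k=\{y\mid 2^{-k}R\leq d(x,y)<2^{-k+1}R\},\qquad k\geq 0.
\end{align*}

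On $A_k$ we have $d(x,y)^\alpha\leq (2^{-k+1}R)^\alpha$, and $1\leq (2^{-k+1}R)^{sp}d(x,y)^{-sp}$. Since $A_k\subseteq X\setminus B_{2^{-k}R}(x)$, assumption \eqref{eqPre:TailIntegrabilityAssumption} applied with radius $2^{-k}R\leq R_0$ gives
\begin{align*}
    \nu(x,A_k)\leq (2^{-k+1}R)^{sp}\int_{X\setminus B_{2^{-k}R}(x)}\frac{\nu(x,\dy)}{d(x,y)^{sp}}\leq (2^{-k+1}R)^{sp}\frac{C_2}{(2^{-k}R)^{sp}}=2^{sp}C_2.
\end{align*}
The only point to check is the radius: $2^{-k}R\leq R\leq R_0$ for every $k\geq 0$, so the assumption is available for all annuli. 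Hence each annulus carries $\nu(x,\cdot)$-mass at most $2^{sp}C_2$.

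Summing over the annuli,
\begin{align*}
    \int_{B_{2R}(x)}d(x,y)^\alpha\nu(x,\dy)\leq\sum_{k\geq 0}(2^{-k+1}R)^\alpha\,2^{sp}C_2=2^{\alpha+sp}C_2R^\alpha\sum_{k\geq0}2^{-k\alpha}.
\end{align*}
The geometric series converges because $\alpha>0$, so the right-hand side is at most $CR^\alpha$ with $C=C(s,p,C_2,\alpha)$, as claimed.

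There is no real obstacle. The one place requiring care is that the tail assumption is only assumed for radii $\leq R_0$; the decomposition uses only radii $2^{-k}R\leq R_0$, so this is harmless.
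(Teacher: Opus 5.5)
Your proof is correct and is essentially the same as the paper's: you use $B_R(x_0)\subseteq B_{2R}(x)$, split into the dyadic annuli $B_{2^{1-k}R}(x)\setminus B_{2^{-k}R}(x)$, apply \eqref{eqPre:TailIntegrabilityAssumption} at the radii $2^{-k}R\leq R_0$, and sum the geometric series $2^{\alpha+sp}C_2R^\alpha\sum_{k\geq 0}2^{-k\alpha}$. One small simplification: you need neither $\nu(x,\{x\})=0$ nor a restriction to $\mu$-a.e.\ $x$, because the integrand $d(x,y)^\alpha$ vanishes at $y=x$ (as $\alpha>0$), so the point $\{x\}$ contributes nothing and the bound holds for every $x\in B_R(x_0)$, as stated.
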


\begin{proof}
    For any $x\in B_R(x_0)$, we have
    \begin{align*}
        &\int_{B_R(x_0)} d(x,y)^\alpha\nu(x,\dy)\leq\int_{B_{2R}(x)}d(x,y)^{\alpha+sp}\frac{\nu(x,\dy)}{d(x,y)^{sp}} \\
        &\quad =\sum_{i=0}^\infty\int_{B_{2^{1-i}R}(x)\setminus B_{2^{-i}R}(x)}d(x,y)^{\alpha+sp}\frac{\nu(x,\dy)}{d(x,y)^{sp}}\leq\sum_{i=0}^\infty\left(2^{1-i}R\right)^{\alpha+sp}\int_{X\setminus B_{2^{-i}R}(x)}\frac{\nu(x,\dy)}{d(x,y)^{sp}} \\
        &\quad\leq\sum_{i=0}^\infty 2^{(1-i)(\alpha+sp)}R^{\alpha+sp}\frac{C_2}{(2^{-i}R)^{sp}}=2^{\alpha+sp}C_2R^\alpha\sum_{i=0}^\infty 2^{-i\alpha}\leq CR^\alpha
    \end{align*}
    by \eqref{eqPre:TailIntegrabilityAssumption}.
\end{proof}

\begin{lemma}\label{lemPre:SupEstimate}
    Let $A,B\subseteq X$ be two Borel sets with $\dist(A,B)>0$ and let $u\colon X\to\R$ be Borel measurable. Then $|u(y)|\leq\|u\|_{L^\infty(B)}$ for $\nu(x,\cdot)$-almost all $y\in B$ for $\mu$-almost all $x\in A$. 
\end{lemma}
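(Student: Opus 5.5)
Set $M=\|u\|_{L^\infty(B)}$; if $M=\infty$ there is nothing to prove, so assume $M<\infty$. The plan is to apply the symmetry Lemma \ref{lemPre:SymmetryFunctions} to the indicator of the exceptional set. Let $E=\{y\in B: |u(y)|>M\}$. By definition of the essential supremum, $E$ is Borel and $\mu(E)=0$. The claim is equivalent to
\begin{align*}
    \int_A\nu(x,E)\,\mu(\dx)=\int_A\int_B\indicator_E(y)\,\nu(x,\dy)\mu(\dx)=0,
\end{align*}
since a nonnegative function with zero integral vanishes $\mu$-a.e.\ on $A$. Consequently, for $\mu$-a.e.\ $x\in A$ we have $\nu(x,E)=0$, which is exactly the statement.

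To show the integral vanishes, take $g(x,y)=\indicator_E(y)$, which is $(\B(X)\otimes\B(X))$-measurable. To meet the integrability hypothesis \eqref{eqPre:SymmetryFunctionsIntegrability}, localize first. Fix $x_0\in X$ and set $A_n=A\cap B_n(x_0)$, so that $\dist(A_n,B)\ge\dist(A,B)=\rho>0$. Taking $r=\min\{\rho,R_0\}$, assumption \eqref{eqPre:TailIntegrabilityAssumption} together with $d(x,y)\le n+d(x_0,y)$ would control the integral. However, $B$ may be unbounded, so I would also intersect $B$ with $B_m(x_0)$. Then $d(x,y)\le n+m$ on $A_n\times B_m$, and hence
\begin{align*}
    \int_{A_n}\int_{B\cap B_m(x_0)}\nu(x,\dy)\mu(\dx)\le (n+m)^{sp}\int_{A_n}\int_{X\setminus B_r(x)}\frac{\nu(x,\dy)}{d(x,y)^{sp}}\mu(\dx)\le (n+m)^{sp}\frac{C_2}{r^{sp}}\mu(B_n(x_0))<\infty.
\end{align*}
Alternatively, one can note that the proof of Lemma \ref{lemPre:SymmetryFunctions} already covers nonnegative $g$ without any integrability assumption. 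Either way, symmetry gives
\begin{align*}
    \int_{A_n}\int_{B\cap B_m(x_0)}\indicator_E(y)\,\nu(x,\dy)\mu(\dx)=\int_{B\cap B_m(x_0)}\int_{A_n}\indicator_E(x)\,\nu(x,\dy)\mu(\dx)=\int_{E\cap B_m(x_0)}\nu(x,A_n)\,\mu(\dx)=0,
\end{align*}
because $\mu(E)=0$. Letting $m,n\to\infty$ by monotone convergence yields $\int_A\nu(x,E)\mu(\dx)=0$.

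The only delicate point is the justification of the symmetry step: the integrability/$\sigma$-finiteness hypotheses and the fact that the outer integral on the right becomes an integral over a $\mu$-null set. Everything else is bookkeeping. The $\mu$-null set of exceptional $x$ is the union over countably many $n$, which is still null. Likewise, the statement is independent of the Borel representative chosen for $u$ as a $\mu$-class, since we used $u$ itself to define $E$.
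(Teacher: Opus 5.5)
Your proof is correct and uses the same idea as the paper: symmetry turns $\int_A\nu(x,E)\,\mu(\dx)$ into $\int_E\nu(x,A)\,\mu(\dx)$, which vanishes because $\mu(E)=0$. The paper simply applies \eqref{eqPre:SymmetryAssumption} directly to $A$ and $E$, since $\dist(A,E)\geq\dist(A,B)>0$ and the identity is allowed to hold with infinite values; your localization through Lemma \ref{lemPre:SymmetryFunctions} and \eqref{eqPre:TailIntegrabilityAssumption} is valid but unnecessary.
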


\begin{proof}
    Set $N=\{y\in B\mid |u(y)|>\|u\|_{L^\infty(B)}\}$. By definition, $\mu(N)=0$. Thus, by \eqref{eqPre:SymmetryAssumption}, we have
    \begin{align*}
        \int_A\nu(x,N)\mu(\dx)=\int_N\nu(x,A)\mu(\dx)=0,
    \end{align*}
    which proves that $\nu(x,N)=0$ for $\mu$-almost all $x\in A$.
\end{proof}

Moreover, we will need the following lemma about the relation between two nested tails.

\begin{lemma}\label{lemPre:TailMonotonicity}
    Let $x_0\in X$ and $0<R\leq 2R_0$. Assume that $u\in L^\infty(B_R(x_0))\cap L_{s,\nu}^f(X)$. Then, for any $z\in B_{R/4}(x_0)$ and $0<r\leq R/2$, it is true that
    \begin{align*}
        \Tail{u}{z}{r}\leq C\left(\|u\|_{L^\infty(B_R(x_0))}+\Tail{u}{x_0}{R}\right)
    \end{align*}
    for some $C=C(p,q,C_2)>0$.
\end{lemma}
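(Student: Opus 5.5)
We fix $x\in B_{r/2}(z)$ and bound the integrand defining $\tail{u}{z}{r}$ by splitting $X\setminus B_r(z)$ into the near part $B_R(x_0)\setminus B_r(z)$ and the far part $X\setminus B_R(x_0)$. The aim is to prove
\begin{align*}
    r^s\tail{u}{z}{r}\leq C\left(f\left(\frac{\|u\|_{L^\infty(B_R(x_0))}}{r^s}\right)+\frac{r^s}{R^s}f\left(\frac{\Tail{u}{x_0}{R}}{R^s}\right)\right).
\end{align*}
We then apply $f^{-1}$ and multiply by $r^s$. Three facts handle the conversion. First, $f^{-1}(a+b)\leq f^{-1}(2a)+f^{-1}(2b)$. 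Second, $f^{-1}(Ca)\leq C' f^{-1}(a)$ by the growth of $f$. Third, $f(\tau/R^s)$ lies above $(r/R)^{s(q-1)}$-type factors times $f(\tau/r^s)$ in the correct direction. Together these give $\Tail{u}{z}{r}\leq C(\|u\|_{L^\infty(B_R(x_0))}+\Tail{u}{x_0}{R})$. In the far-part conversion we must check that $r^sf^{-1}(C(r/R)^sf(T/R^s))\lesssim T$. Since $r\leq R$, this follows from $f^{-1}(\theta f(\sigma))\leq C\theta^{1/(q-1)}\sigma$ for $\theta\leq 1$, together with $r^s(r/R)^{s/(q-1)}R^{-s}\leq 1$.

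\emph{Near part.} For $y\in B_R(x_0)\setminus B_r(z)$ we have $d(x,y)\geq r/2$. By Lemma~\ref{lemPre:SupEstimate}, $|u(y)|\leq M:=\|u\|_{L^\infty(B_R(x_0))}$ for $\nu(x,\cdot)$-a.e.\ $y$ and a.e.\ $x$; the sets $B_{r/2}(z)$ and $X\setminus B_r(z)$ are not separated, so we use slightly shrunk balls or approximate. Monotonicity of $f$ then gives $f(|u(y)|/d(x,y)^s)\leq f(M 2^s/r^s)$. We bound $\int_{X\setminus B_{r/2}(x)}d(x,y)^{-s}\nu(x,\dy)$ by \eqref{eqPre:TailIntegrabilityAssumption}. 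Note that $d^{-s}=d^{-sp}d^{s(p-1)}$ is not controlled pointwise in this form, since $d^{s(p-1)}$ is unbounded at infinity. We therefore restrict to $y\in B_R(x_0)$, where $d(x,y)\leq 2R$, and use dyadic annuli as in Lemma~\ref{lemPre:distance_integrability_nu}. Each annulus $B_{2^{k+1}r}(x)\setminus B_{2^k r}(x)$ contributes at most $(2^kr)^{s(p-1)}C_2(2^kr)^{-sp}=C_2(2^kr)^{-s}$, and this sums geometrically to $Cr^{-s}$. This handles the near part once we also use $r\leq R/2\leq R_0$; the radii $2^kr$ may exceed $R_0$, which requires $R\leq 2R_0$ and a bounded number of extra annuli.

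\emph{Far part.} For $y\notin B_R(x_0)$, the points $x\in B_{r/2}(z)\subseteq B_{R/2}(x_0)$ (since $|z-x_0|<R/4$ and $r/2\leq R/4$) are admissible in the supremum defining $\tail{u}{x_0}{R}$. So the far integral is at most $\tail{u}{x_0}{R}=R^{-s}f(\Tail{u}{x_0}{R}/R^s)$, and multiplying by $r^s$ gives the second term.

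\emph{Main obstacle.} The delicate step is the near part. It needs a $d^{-s}$ (rather than $d^{-sp}$) integrability of $\nu(x,\cdot)$ on $B_{2R}(x)\setminus B_{r/2}(x)$ with the correct $r^{-s}$ scaling. It also needs the a.e.\ issue of applying Lemma~\ref{lemPre:SupEstimate}, whose sets must be at positive distance. For the latter I would first replace $B_{r/2}(z)$ by $B_{r/2-\epsilon}(z)$ and let $\epsilon\to0$, using that the essential supremum over an increasing union is the limit.
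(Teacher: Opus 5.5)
Your argument is correct, and its skeleton matches the paper's proof. You split $X\setminus B_r(z)$ at $\partial B_R(x_0)$ and use Lemma \ref{lemPre:SupEstimate} on the near part. You identify the far part with $\tail{u}{x_0}{R}$ because $B_{r/2}(z)\subseteq B_{R/2}(x_0)$, and then invert $f$.

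The real difference is the near part. You discard the decay in $d(x,y)$ via monotonicity, $f(|u(y)|/d(x,y)^s)\le f(2^sM/r^s)$. You are then forced to integrate $d(x,y)^{-s}\nu(x,\dy)$, which \eqref{eqPre:TailIntegrabilityAssumption} does not control directly. Hence your dyadic annuli, the cut-off $d(x,y)<3R/2$, and the extension of \eqref{eqPre:TailIntegrabilityAssumption} to radii up to $3R_0$ via $R\le 2R_0$.

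The paper keeps the decay instead. On the near part $(r/d(x,y))^s\le 2^s$, so the growth properties of $f$ give
\begin{align*}
    f\left(\frac{M}{r^s}\left(\frac{r}{d(x,y)}\right)^s\right)\leq C(p,q)\left(\frac{r}{d(x,y)}\right)^{s(p-1)}f\left(\frac{M}{r^s}\right).
\end{align*}
This turns the kernel $d^{-s}$ into $d^{-sp}$. A single application of \eqref{eqPre:TailIntegrabilityAssumption} at radius $r/2\le R_0$ then gives $Cr^{-s}f(M/r^s)$. So the "main obstacle" you identify is self-inflicted.

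Your detour also costs something. The geometric series $\sum_k 2^{-ks}$ makes your constant depend on $s$ as well; it blows up like $1/s$. The statement claims $C=C(p,q,C_2)$, and the paper's route achieves this since $2^{sp}\le 2^p$. This is harmless for the later applications, where all constants depend on $s$ anyway, but your proof does not give the stated dependence.

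Two smaller remarks:
\begin{enumerate}
    \item Your worry about separation is unfounded. For $x\in B_{r/2}(z)$ and $y\notin B_r(z)$ one has $d(x,y)>r/2$, as you note yourself. So $\dist(B_{r/2}(z),B_R(x_0)\setminus B_r(z))\ge r/2>0$, and Lemma \ref{lemPre:SupEstimate} applies directly without shrinking balls.
    \item The far-part inversion needs no $\theta^{1/(q-1)}$ estimate. Since $r\le R$, monotonicity alone gives $r^sf^{-1}((r/R)^sf(T/R^s))\le r^sT/R^s\le T$.
\end{enumerate}
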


\begin{proof}
    Fix $x\in B_{r/2}(z)\subseteq B_{R/2}(x_0)$. Then, by \eqref{eqPre:TailIntegrabilityAssumption} and Lemma \ref{lemPre:SupEstimate},
    \begin{align*}
        &\int_{X\setminus B_r(z)}f\left(\frac{|u(y)|}{d(x,y)^s}\right)\frac{\nu(x,\dy)}{d(x,y)^s} \\
        &\quad\leq\int_{B_R(x_0)\setminus B_r(z)}f\left(\frac{\|u\|_{L^\infty(B_R(x_0))}}{r^s}\left(\frac{r}{d(x,y)}\right)^s\right)\frac{\nu(x,\dy)}{d(x,y)^s}+\int_{X\setminus B_R(x_0)}f\left(\frac{|u(y)|}{d(x,y)^s}\right)\frac{\nu(x,\dy)}{d(x,y)^s} \\
        &\quad\leq Cf\left(\frac{\|u\|_{L^\infty(B_R(x_0))}}{r^s}\right)r^{s(p-1)}\int_{X\setminus B_{r/2}(x)}\frac{\nu(x,\dy)}{d(x,y)^{sp}}+\tail{u}{x_0}{R} \\
        &\quad\leq\frac{C}{r^s}f\left(\frac{\|u\|_{L^\infty(B_R(x_0))}}{r^s}\right)+\tail{u}{x_0}{R},
    \end{align*}
    which proves the result by evaluating $\Tail{u}{z}{r}$.
\end{proof}

\section{Sobolev--Poincar\'{e} inequality}\label{sec:SobPoi}

In this section, we want to use the assumption that $\nu$ supports a Poincar\'{e} inequality \eqref{eqPre:PoincareAssumption} to prove a Sobolev--Poincar\'{e} inequality that involves the growth function $F$. Note that the improvement of integrability in the Sobolev--Poincar\'{e} inequality is crucial to make the De Giorgi iteration work. As a first step, we prove a Jensen-type inequality.

\begin{lemma}\label{lemSobPoi:JensenNu}
    Let $x_0\in X$, $0<R\leq R_0$ and $u\in W^{s,F,\nu}(B_R(x_0))$. Then, for any $x\in B_R(x_0)$, it holds
    \begin{align*}
        F\left(R^{t-s}\left(\int_{B_R(x_0)}\frac{|u(x)-u(y)|^p}{d(x,y)^{tp}}\nu(x,\dy)\right)^\frac{1}{p}\right)\leq C\int_{B_R(x_0)}F\left(\frac{|u(x)-u(y)|}{d(x,y)^s}\right)\nu(x,\dy)
    \end{align*}
    for some $C=C(\C,s,t,p,q,C_2)>0$.
\end{lemma}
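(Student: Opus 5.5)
We reduce the statement to Jensen's inequality for the convex function $G$ of Corollary \ref{corPre:ConvexityComposition}, applied with respect to a normalized weighted measure built from $\nu(x,\cdot)$. Fix $x\in B_R(x_0)$, write $w(y)=|u(x)-u(y)|/d(x,y)^s$, and rewrite the integrand on the left as
\begin{align*}
    \frac{|u(x)-u(y)|^p}{d(x,y)^{tp}}=w(y)^p\,d(x,y)^{(s-t)p}.
\end{align*}
Since $t<s$, the weight $d(x,y)^{(s-t)p}$ is integrable against $\nu(x,\cdot)$ on $B_R(x_0)$. By Lemma \ref{lemPre:distance_integrability_nu} with $\alpha=(s-t)p>0$ we get
\begin{align*}
    M\coloneqq\int_{B_R(x_0)}d(x,y)^{(s-t)p}\nu(x,\dy)\leq C_0R^{(s-t)p},\qquad C_0=C_0(s,t,p,C_2).
\end{align*}
If $M=0$ both sides vanish (or the left side is $F(0)=0$), so we assume $M>0$. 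We then define the probability measure $\sigma(\dy)=M^{-1}d(x,y)^{(s-t)p}\indicator_{B_R(x_0)}(y)\nu(x,\dy)$.

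The argument of $F$ on the left is $R^{t-s}(M\int w^p\,\d\sigma)^{1/p}\leq C_0^{1/p}(\int w^p\,\d\sigma)^{1/p}$, using $M R^{(t-s)p}\leq C_0$. Since $F$ is increasing and $F(ab)\leq a^qF(b)$ for $a\geq1$, we obtain
\begin{align*}
    \text{LHS}\leq C\,F\Big(\big(\textstyle\int w^p\,\d\sigma\big)^{1/p}\Big)\leq C\,C'\,G\Big(\textstyle\int w^p\,\d\sigma\Big),
\end{align*}
by \eqref{eqPre:ConvexityCompositionEquivalence}. Jensen's inequality for the convex $G$ and the probability measure $\sigma$ then gives
\begin{align*}
    G\Big(\textstyle\int w^p\,\d\sigma\Big)\leq\int G(w^p)\,\d\sigma\leq\int F(w)\,\d\sigma.
\end{align*}

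It remains to remove the normalization. We have
\begin{align*}
    \int F(w)\,\d\sigma=M^{-1}\int_{B_R(x_0)}F(w(y))\,d(x,y)^{(s-t)p}\nu(x,\dy).
\end{align*}
This is the main obstacle: $M^{-1}$ is not bounded above by a constant, because $\nu$ could give little mass near $x$. There is also a second issue. If we simply bound $d(x,y)^{(s-t)p}\leq(2R)^{(s-t)p}$, the result is $(2R)^{(s-t)p}M^{-1}\int F(w)\,\d\nu$, and the ratio $R^{(s-t)p}/M$ need not be bounded.

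The better route is to avoid normalizing by $M$ and instead normalize by the deterministic quantity $N\coloneqq C_0R^{(s-t)p}\geq M$. Let $\tilde\sigma=N^{-1}d(x,y)^{(s-t)p}\nu(x,\dy)$ on $B_R(x_0)$, which is a sub-probability measure. We complete it to a probability measure by adding the defect mass $1-\tilde\sigma(B_R(x_0))$ at a point where $w=0$, formally a Dirac mass at value $0$. Jensen's inequality then applies with $G(0)=0$, giving
\begin{align*}
    G\Big(\textstyle\int w^p\,\d\tilde\sigma\Big)\leq\int G(w^p)\,\d\tilde\sigma.
\end{align*}
Now the left-hand argument is exactly $C_0^{-1}R^{(t-s)p}\int w^p d^{(s-t)p}\,\d\nu$, so $F$ of the left-hand quantity is controlled up to constants by $G$ of it (using $(p,q)$-growth to absorb $C_0^{1/p}$). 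On the right, $d(x,y)^{(s-t)p}/N\leq(2R)^{(s-t)p}/(C_0R^{(s-t)p})=C$, so
\begin{align*}
    \int G(w^p)\,\d\tilde\sigma\leq C\int_{B_R(x_0)}F(w)\,\nu(x,\dy).
\end{align*}
This proves the lemma with $C=C(\C,s,t,p,q,C_2)$. The only delicate points are that $G$ is convex with $G(0)=0$, which justifies the sub-probability Jensen step, and that $t<s$, which makes $N$ finite with the correct scaling.
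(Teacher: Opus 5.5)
Your proof is correct and is essentially the paper's argument. Both apply Jensen's inequality for the convex $G$ of Corollary \ref{corPre:ConvexityComposition} against the normalized measure $d(x,y)^{(s-t)p}\nu(x,\dy)$ on $B_R(x_0)$, with Lemma \ref{lemPre:distance_integrability_nu} giving the mass bound and $d(x,y)\leq 2R$ used at the end. The only difference is how the normalization is handled. The paper divides by the true mass $I(x)=R^{(t-s)p}\int_{B_R(x_0)}d(x,y)^{(s-t)p}\nu(x,\dy)$, keeps $I(x)$ inside the argument of $G$, and cancels the factor $1/I(x)$ via $F(I^{1/p}a)\leq C\,I\,F(a)$, which follows from the lower index $p$ and $I\leq C$. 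You instead divide by the deterministic bound $N$ and use sub-probability Jensen with $G(0)=0$. Both exploit the same star-shapedness of $G$, so the obstacle you flagged in your first attempt is removable either way.
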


\begin{proof}
    Define
    \begin{align*}
        W(x,y)=\left(\frac{d(x,y)}{R}\right)^{(s-t)p} \and I(x)=\int_{B_R(x_0)}W(x,y)\nu(x,\dy).
    \end{align*}
    If $I(x)=0$, then the claim is immediate because this implies $\nu(x,B_R(x_0)\setminus\{x\})=0$. Let us assume $I(x)>0$. By Lemma \ref{lemPre:distance_integrability_nu}, we know that
    \begin{align}\label{eqSobPoi:JensenNuMassBound}
        I(x)=\int_{B_R(x_0)}\left(\frac{d(x,y)}{R}\right)^{(s-t)p}\nu(x,\dy)\leq C
    \end{align}
    for some $C=C(s,t,p,C_2)>0$. Moreover, we define weights
    \begin{align*}
        w(x,y)=\frac{W(x,y)}{I(x)}.
    \end{align*}
    It is clear from the definition of $I(x)$ that
    \begin{align*}
        \int_{B_R(x_0)}w(x,y)\nu(x,\dy)=1
    \end{align*}
    for any $x\in B_R(x_0)$. Let $G$ be the convex function from Corollary \ref{corPre:ConvexityComposition}. We apply Jensen's inequality to $G$ and the probability measure $w(x,y)\nu(x,\dy)$, i.e.
    \begin{align*}
        &F\left(R^{t-s}\left(\int_{B_R(x_0)}\frac{|u(x)-u(y)|^p}{d(x,y)^{tp}}\nu(x,\dy)\right)^\frac{1}{p}\right) \\
        &=F\left(\left(\int_{B_R(x_0)}\frac{|u(x)-u(y)|^p}{d(x,y)^{sp}}W(x,y)\nu(x,\dy)\right)^\frac{1}{p}\right) \\
        &\quad\leq CG\left(\int_{B_R(x_0)}\frac{|u(x)-u(y)|^p}{d(x,y)^{sp}}I(x)w(x,y)\nu(x,\dy)\right) \\
        &\quad\leq C\int_{B_R(x_0)}G\left(\frac{|u(x)-u(y)|^p}{d(x,y)^{sp}}I(x)\right)w(x,y)\nu(x,\dy) \\
        &\quad\leq C\int_{B_R(x_0)}F\left(\frac{|u(x)-u(y)|}{d(x,y)^s}I^{1/p}(x)\right)w(x,y)\nu(x,\dy).
    \end{align*}
    Note that we applied \eqref{eqPre:ConvexityCompositionEquivalence} twice in the above argument. Since $I(x)\leq C$ by \eqref{eqSobPoi:JensenNuMassBound}, we may write
    \begin{align*}
        &F\left(R^{t-s}\left(\int_{B_R(x_0)}\frac{|u(x)-u(y)|^p}{d(x,y)^{tp}}\nu(x,\dy)\right)^\frac{1}{p}\right) \\
        &\quad\leq C\int_{B_R(x_0)}F\left(\frac{|u(x)-u(y)|}{d(x,y)^s}\right)I(x)w(x,y)\nu(x,\dy) \\
        &\quad\leq C\int_{B_R(x_0)}F\left(\frac{|u(x)-u(y)|}{d(x,y)^s}\right)W(x,y)\nu(x,\dy)
    \end{align*}
    using the definition of $w$. Finally, recall that $W(x,y)\leq 2^{(s-t)p}$ for any $x,y\in B_R(x_0)$, which finishes the proof.
\end{proof}

At this point, we could use this lemma and the theory of maximal functions (in particular, \cite[Theorem 2.2]{Hei01}) to establish a fractional Poincar\'{e} inequality, which involves the Young function $F$. However, the proof will be similar to the one given below, where we use fractional maximal functions to derive a Sobolev--Poincar\'{e} inequality. We will need the following weak-type bound for fractional maximal functions. We could not find a source that provides exactly this statement without assuming an absolute lower mass bound on the metric measure space. For convenience, we will give the full proof below, which follows the ideas of \cite[Theorem 2.2]{Hei01} and \cite[Corollary 5.3]{HKKT15}.

\begin{lemma}\label{lemSobPoi:WeakTypeBoundFractionalMaximalFunction}
    Let $x_0\in X$ and $R>0$. For a nonnegative function $g\in L^1(B_{3R}(x_0),\mu)$, define the fractional maximal function $\M_{B_R(x_0)}^s g$ by
    \begin{align}\label{eqSobPoi:FractionalMaximalFunction}
        \M_{B_R(x_0)}^s g(x)=\sup_{0<r\leq 2R}\frac{r^s}{\mu(B_r(x))}\int_{B_r(x)\cap B_R(x_0)}g\,\d\mu.
    \end{align}
    Then $\M_{B_R(x_0)}^s g$ satisfies the weak-type bound
    \begin{align*}
        \left(\frac{\mu(\{x\in B_R(x_0)\mid \M_{B_R(x_0)}^s g(x)>\tau\})}{\mu(B_R(x_0))}\right)^\frac{Q-s}{Q}\leq C\frac{R^s}{\tau}\fint_{B_{3R}(x_0)}g\,\d\mu
    \end{align*}
    for any $\tau>0$ and some $C=C(\C,s)>0$.

\end{lemma}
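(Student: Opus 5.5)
Set $E_\tau=\{x\in B_R(x_0)\mid \M_{B_R(x_0)}^s g(x)>\tau\}$, $M=\int_{B_{3R}(x_0)}g\,\d\mu$ and $V=\mu(B_R(x_0))$. The plan is a Vitali covering argument, in which the lower mass bound of Lemma \ref{lemPre:LowerMassBound} converts the factor $r^s$ into a power of the measure ratio.

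For each $x\in E_\tau$ we pick a radius $r_x\in(0,2R]$ with
\begin{align*}
    \mu(B_{r_x}(x))<\frac{r_x^s}{\tau}\int_{B_{r_x}(x)\cap B_R(x_0)}g\,\d\mu.
\end{align*}
These radii are uniformly bounded, so Lemma \ref{lemPre:5BCovering} yields a countable pairwise disjoint subfamily $\{B_i\}=\{B_{r_i}(x_i)\}$ with $E_\tau\subseteq\bigcup_i 5B_i$. Since $x_i\in B_R(x_0)$ and $r_i\leq 2R$, every $B_i$ lies in $B_{3R}(x_0)$. By doubling,
\begin{align*}
    \mu(E_\tau)\leq\C^3\sum_i\mu(B_i).
\end{align*}

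The key step is a pointwise bound for each ball. Lemma \ref{lemPre:LowerMassBound}, applied with $y=x_i\in B_{2R}(x_0)$ and radii $r_i\leq 2R$, gives
\begin{align*}
    \frac{r_i}{R}\leq C\left(\frac{\mu(B_i)}{\mu(B_{2R}(x_0))}\right)^{1/Q}\leq C\left(\frac{\mu(B_i)}{V}\right)^{1/Q}.
\end{align*}
Inserting this into the choice of $r_i$ yields
\begin{align*}
    \mu(B_i)\leq C\frac{R^s}{\tau}\left(\frac{\mu(B_i)}{V}\right)^{s/Q}\int_{B_i}g\,\d\mu,
\end{align*}
which rearranges to
\begin{align*}
    \left(\frac{\mu(B_i)}{V}\right)^{\frac{Q-s}{Q}}\leq C\frac{R^s}{\tau V}\int_{B_i}g\,\d\mu.
\end{align*}
Here $Q\geq1>s$, so the exponent $\frac{Q-s}{Q}$ is positive.

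To sum over $i$, note that $\frac{Q-s}{Q}\leq1$, so $\sum_i a_i\leq\bigl(\sum_i a_i^{(Q-s)/Q}\bigr)^{Q/(Q-s)}$ for nonnegative $a_i$. This is superadditivity of $x\mapsto x^{Q/(Q-s)}$, equivalently the inclusion $\ell^\theta\subseteq\ell^1$ for $\theta<1$. Applying it with $a_i=\mu(B_i)/V$ and using disjointness of the $B_i\subseteq B_{3R}(x_0)$, we obtain
\begin{align*}
    \left(\frac{\mu(E_\tau)}{V}\right)^{\frac{Q-s}{Q}}\leq C\Bigl(\sum_i a_i\Bigr)^{\frac{Q-s}{Q}}\leq C\sum_i a_i^{\frac{Q-s}{Q}}\leq C\frac{R^s}{\tau V}M.
\end{align*}
Finally $\mu(B_{3R}(x_0))\leq\C^2V$ turns $M/V$ into $C\fint_{B_{3R}(x_0)}g\,\d\mu$.

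The only delicate points are measurability of $E_\tau$ and the constants. $E_\tau$ is open, since $x\mapsto\mu(B_r(x))$ behaves semicontinuously and a strict inequality persists under small perturbation of $r$; in any case an outer-measure version of the argument suffices. All constants depend only on $\C$ through $Q$ and $c$, and on $s$. I expect the main care to be needed in applying Lemma \ref{lemPre:LowerMassBound} correctly when $r_i$ exceeds $R$; comparing against $B_{2R}(x_0)$, which contains $x_i$, handles this.
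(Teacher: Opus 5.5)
Your proposal is correct and follows essentially the same route as the paper: you choose radii $r_x\leq 2R$, extract a disjoint subfamily with the $5B$ covering lemma, and use Lemma \ref{lemPre:LowerMassBound} to replace $r_x^s$ by $R^s$ times a power $s/Q$ of a measure ratio. You then rearrange and sum using subadditivity of $a\mapsto a^{(Q-s)/Q}$. The differences are cosmetic: you normalize by $\mu(B_R(x_0))$ and compare with $B_{2R}(x_0)$ rather than $B_{3R}(x_0)$, and you additionally address measurability of $E_\tau$, which the paper leaves implicit.
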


\begin{proof}
    Fix $\tau>0$ and define the superlevel set $E_\tau=\{x\in B_R(x_0)\mid\M_{B_R(x_0)}^s g(x)>\tau\}$. By definition of the fractional maximal function \eqref{eqSobPoi:FractionalMaximalFunction}, for any $x\in E_\tau$, we can find a radius $r_x\in(0,2R]$ such that
    \begin{align}\label{eqSobPoi:WeakTypeBoundFractionalMaximalFunctionSupremumImplication}
        \frac{r_x^s}{\mu(B_x)}\int_{B_x\cap B_R(x_0)}g\,\d\mu>\tau,
    \end{align}
    where we denote $B_x=B_{r_x}(x)$. Since $r_x\leq 2R$, we have $B_x\subseteq B_{3R}(x_0)$ such that, by Lemma \ref{lemPre:LowerMassBound},
    \begin{align*}
        \frac{\mu(B_x)}{\mu(B_{3R}(x_0))}\geq c\left(\frac{r_x}{R}\right)^Q
    \end{align*}
    or, in other words,
    \begin{align*}
        r_x^s=\left(r_x^Q\right)^\frac{s}{Q}\leq C\left(R^Q\frac{\mu(B_x)}{\mu(B_{3R}(x_0))}\right)^\frac{s}{Q}=CR^s\left(\frac{\mu(B_x)}{\mu(B_{3R}(x_0))}\right)^\frac{s}{Q}.
    \end{align*}
    Hence, \eqref{eqSobPoi:WeakTypeBoundFractionalMaximalFunctionSupremumImplication} yields
    \begin{align*}
        \tau&<CR^s\left(\frac{\mu(B_x)}{\mu(B_{3R}(x_0))}\right)^\frac{s}{Q}\frac{1}{\mu(B_x)}\int_{B_x\cap B_R(x_0)}g\,\d\mu \\
        &\leq CR^s\left(\frac{\mu(B_{3R}(x_0))}{\mu(B_x)}\right)^\frac{Q-s}{Q}\frac{1}{\mu(B_{3R}(x_0))}\int_{B_x}g\,\d\mu.
    \end{align*}
    We can rearrange this inequality to get
    \begin{align}\label{eqSobPoi:WeakTypeBoundMeasureEstimate}
        \left(\frac{\mu(B_x)}{\mu(B_R(x_0))}\right)^\frac{Q-s}{Q}\leq C\left(\frac{\mu(B_x)}{\mu(B_{3R}(x_0))}\right)^\frac{Q-s}{Q}\leq C\frac{R^s}{\tau}\frac{1}{\mu(B_{3R}(x_0))}\int_{B_x}g\,\d\mu
    \end{align}
    by the doubling property. Let us apply Lemma \ref{lemPre:5BCovering} to the family $\{B_x\}_{x\in E_\tau}$, i.e.\ we can extract a countable, pairwise disjoint subfamily $\{B^i\}_{i=1}^\infty$ satisfying
    \begin{align*}
        E_\tau\subseteq\bigcup_{x\in E_\tau}B_x\subseteq\bigcup_{i=1}^\infty 5B^i.
    \end{align*}
    Using the doubling property three times, we arrive at
    \begin{align*}
        \mu(E_\tau)\leq\sum_{i=1}^\infty\mu(5B^i)\leq\C^3\sum_{i=1}^\infty\mu(B^i).
    \end{align*}
    Recall that each $B^i$ is one of the $B_x$. Since $0<s<1\leq Q$, the subadditivity $(a+b)^\beta\leq a^\beta+b^\beta$ for $a,b\geq 0$ and $\beta\in(0,1)$ as well as \eqref{eqSobPoi:WeakTypeBoundMeasureEstimate} imply that
    \begin{align*}
        \left(\frac{\mu(E_\tau)}{\mu(B_R(x_0))}\right)^\frac{Q-s}{Q}&\leq C\left(\sum_{i=1}^\infty\frac{\mu(B^i)}{\mu(B_{3R}(x_0))}\right)^\frac{Q-s}{Q}\leq C\sum_{i=1}^\infty\left(\frac{\mu(B^i)}{\mu(B_{3R}(x_0))}\right)^\frac{Q-s}{Q} \\
        &\leq C\sum_{i=1}^\infty\frac{R^s}{\tau}\frac{1}{\mu(B_{3R}(x_0))}\int_{B^i}g\,\d\mu=C\frac{R^s}{\tau}\frac{1}{\mu(B_{3R}(x_0))}\int_{\bigcup_{i=1}^\infty B^i}g\,\d\mu \\
        &\leq C\frac{R^s}{\tau}\fint_{B_{3R}(x_0)}g\,\d\mu,
    \end{align*}
    where we also used that the $B^i$ are pairwise disjoint and contained in $B_{3R}(x_0)$ in the last two steps. Recalling the definition of $E_\tau$, the proof is complete.
\end{proof}
If one assumes an absolute lower mass bound on $\mu$, that is,
\begin{align}\label{eqSobPoi:AbsoluteLowerMassBound}
    \mu(B_r(x))\geq cr^Q \enspace\text{for all}\ r>0,\ x\in X,
\end{align}
then one obtains the full weak-type bound on $X$ for fractional maximal functions, see e.g.\ \cite[Theorem 5.2]{HKKT15}. Without \eqref{eqSobPoi:AbsoluteLowerMassBound}, the weak-type bound as proven above still leads to the following dilated Sobolev--Poincar\'{e} inequality. The proof goes along the lines of \cite[Theorem 8.1.7]{HKST15} and was adjusted to Orlicz growth and nonlocal settings.

\begin{thm}[Sobolev--Poincar\'{e} inequality] \label{thmSobPoi:SobolevPoincare}
    Let $\theta\in[1,\frac{Q}{Q-s})$. There exists $\Lambda=\Lambda(\lambda)\geq 1$ such that, for any $x_0\in X$, $0<R\leq\frac{R_0}{2\lambda}$, and every $u\in W^{s,F,\nu}(B_{\Lambda R}(x_0))$,
    \begin{align*}
        \fint_BF^\theta\left(\frac{|u-(u)_{B_R(x_0)}|}{R^s}\right)\d\mu\leq C\left(\fint_{\Lambda B}\int_{\Lambda B}F\left(\frac{|u(x)-u(y)|}{d(x,y)^s}\right)\nu(x,\dy)\mu(\dx)\right)^\theta,
    \end{align*}    
    for some $C=C(\C,s,t,p,q,\lambda,C_1,C_2,\theta)>0$, where $B=B_R(x_0)$.
\end{thm}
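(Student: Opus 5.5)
Following the route of \cite[Theorem 8.1.7]{HKST15}, I will bound $|u(x)-(u)_B|$ pointwise at Lebesgue points by a fractional maximal function of an Orlicz-type energy density, and then integrate using the weak-type bound of Lemma \ref{lemSobPoi:WeakTypeBoundFractionalMaximalFunction} together with a truncation argument. The density will be
\begin{align*}
    g(x)=\int_{\Lambda B}F\left(\frac{|u(x)-u(y)|}{d(x,y)^s}\right)\nu(x,\dy),
\end{align*}
and all constants will be normalized so that $\fint_{\Lambda B} g\,\d\mu=1$. By homogeneity we may do this: replacing $u$ by $u/M$ and using the $(p,q)$-growth of $F$ allows scaling, or we simply pass to the level-set formulation below with a general normalization.

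\textbf{Step 1: control of means on a single ball.} For any ball $B'=B_r(z)$ with $\lambda B'\subseteq \Lambda B$ and $r\le R_0$, I apply \eqref{eqPre:PoincareAssumption} on $B'$ and then Jensen, pulling $F$ inside through the convex minorant $G$ of $F(\cdot^{1/p})$ from Corollary \ref{corPre:ConvexityComposition}. Writing $\fint_{B'}|u-(u)_{B'}|^p \le C r^{tp}\mu(B')^{-1}\int_{\lambda B'}\int_{\lambda B'}\dots$, I then invoke Lemma \ref{lemSobPoi:JensenNu} pointwise in $x$ (with radius $2\lambda r$). Using Jensen in $x$ a second time through $G$, I expect
\begin{align*}
    F\left(\frac{\fint_{B'}|u-(u)_{B'}|\,\d\mu}{r^s}\right)\le C\fint_{\lambda B'}g\,\d\mu .
\end{align*}
This is an $F$-Poincaré inequality. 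Inverting it gives $|(u)_{B'}-(u)_{B''}|\le C r^s F^{-1}\bigl(\fint_{\lambda B'} g\bigr)$ for comparable nested balls.

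\textbf{Step 2: telescoping.} For a Lebesgue point $x\in B$ (Lemma \ref{lemPre:LebesgueDifferentiation}), I chain the balls $B_i=B_{2^{-i}R}(x)$ and link $B_0$ to $B=B_R(x_0)$. Set $\Lambda=3\lambda$ or similar, so that every $\lambda B_i\subseteq\Lambda B$; the restriction $R\le R_0/(2\lambda)$ keeps all radii admissible. This gives
\begin{align*}
    |u(x)-(u)_B|\le C\sum_{i\ge0}(2^{-i}R)^sF^{-1}\Bigl(\fint_{\lambda B_i}g\Bigr).
\end{align*}
The main obstacle is to convert this sum into a fractional maximal function despite the nonlinearity of $F^{-1}$. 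My first attempt is to use $F^{-1}(ab)\le C a^{1/p}F^{-1}(b)$ for $a\ge1$ and the $(p,q)$ bounds. Rather than that, I will follow HKST's truncation: for a level $\tau$, split the sum at the index where $(2^{-i}R)^s\fint g$ crosses $\tau$. This should yield a level-set inclusion
\begin{align*}
    \{x\in B: F(|u(x)-(u)_B|/R^s)>C\tau\}\subseteq\{\M^{s'}_{\Lambda B}\,g>c\,\tau\cdots\}.
\end{align*}
The more robust alternative is a Maz'ya truncation: apply Step 1 to the truncations $v_k=\min\{(u-a_k)_+,a_{k+1}-a_k\}$ with $a_k=2^k$-type levels adapted to $F$, so that the energy densities $g_k$ have disjoint supports and sum to at most $g$.

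\textbf{Step 3: strong-type bound.} Using Lemma \ref{lemSobPoi:WeakTypeBoundFractionalMaximalFunction} on each truncation, I get the weak-type bound
\begin{align*}
    \mu\bigl(\{F(|v_k|/R^s)\gtrsim F(\cdot)\}\bigr)/\mu(B)\le\Bigl(C\fint_{\Lambda B}g_k\Bigr)^{Q/(Q-s)}.
\end{align*}
Summing over $k$ with the disjointness of the $g_k$ and $\sum_k a_k^{Q/(Q-s)}\le(\sum_k a_k)^{Q/(Q-s)}$ yields the strong bound with exponent $Q/(Q-s)$. For the smaller $\theta$ in the statement the bound then follows by Hölder's inequality. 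Finally, I restore the general normalization using the $(p,q)$ homogeneity of $F$; the constants degrade as $\theta\to Q/(Q-s)$, which explains the dependence on $\theta$.
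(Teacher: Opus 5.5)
Your Step 1 and the telescoping in Step 2 match what the paper does, but the decisive step is missing. Lemma \ref{lemSobPoi:WeakTypeBoundFractionalMaximalFunction} is a statement about $\M^s g$. To use it, for $u$ or for each truncation $v_k$ in Step 3, you need a pointwise bound $F(|w(x)-(w)_B|/R^s)\le CR^{-s}\M^s_{\Lambda B}g_w(x)$ at Lebesgue points. This is exactly what you call the main obstacle, and you leave it open. The splitting ``at the index where $(2^{-i}R)^s\fint g$ crosses $\tau$'' is not carried out: the inclusion ends in ``$\cdots$'' with an unspecified $s'$. The Maz'ya truncation does not get around it either, since Step 3 presupposes the same estimate for every $v_k$.

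The tool you discarded closes the gap immediately. If $\lambda B_i\subseteq\Lambda B$, then $\fint_{\lambda B_i}g\,\d\mu\le(2^{-i}R)^{-s}\M^s_{\Lambda B}g(x)$, and $F^{-1}(\sigma\tau)\le\sigma^{1/p}F^{-1}(\tau)$ for $\sigma\ge1$ yields
\begin{align*}
    \sum_{i\ge0}(2^{-i}R)^sF^{-1}\Bigl(\fint_{\lambda B_i}g\,\d\mu\Bigr)\le R^sF^{-1}\bigl(R^{-s}\M^s_{\Lambda B}g(x)\bigr)\sum_{i\ge0}2^{-si(1-1/p)},
\end{align*}
which converges because $p>1$. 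The paper obtains the same bound differently: it applies $F$ to the telescoping sum through the weighted Jensen inequality $F(\sum_ia_i)\le C_\varepsilon\sum_i2^{\varepsilon(q-1)i}F(a_i)$ with $\varepsilon=\frac{s(p-1)}{2(q-1)}$. It then simply combines the weak-type lemma with Cavalieri's principle. The strict inequality $\theta<\frac{Q}{Q-s}$ makes the $\tau$-integral converge, so no truncation is needed.

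Three further points need correcting.

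\textbf{Normalization.} Setting $\fint_{\Lambda B}g\,\d\mu=1$ ``by homogeneity'' and ``restoring'' it at the end is not legitimate. Both sides of the inequality are modulars, and $u\mapsto u/M$ multiplies them by different factors lying between $M^{-q\theta}$ and $M^{-p\theta}$. Undoing the normalization therefore costs a $u$-dependent factor. Fortunately neither argument needs it.

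\textbf{Disjoint supports.} In the nonlocal setting the densities $g_k$ do not have disjoint supports: if $u(x)<a_j$ and $u(y)>a_{l+1}$, the pair $(x,y)$ contributes to $g_k(x)$ for every $j\le k\le l$. What is true, and sufficient, is $\sum_kg_k\le g$. This holds because the increments $v_k(x)-v_k(y)$ share a sign and add up to at most $|u(x)-u(y)|$, and a convex $F$ with $F(0)=0$ is superadditive.

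\textbf{Per-level estimate.} This estimate must carry the level and a normalization. Take a median $m$ of $u$ on $B$ and levels $a_k=m+2^k$, $k\in\mathbb{Z}$. Then $(v_k)_B\le 2^{k-1}$, hence $\mu(\{u-m>2^{k+1}\}\cap B)/\mu(B)\le\bigl(C\fint_{\Lambda B}g_k\,\d\mu/F(2^{k-1}R^{-s})\bigr)^{Q/(Q-s)}$. You then need the $\Delta_2$-property of $F$ to compare levels, and a passage from the median to the mean at the end.

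With these repairs your route is genuinely different from the paper's and in fact stronger. It gives the endpoint $\theta=\frac{Q}{Q-s}$ with a constant that does not degrade, contrary to your closing remark. The paper's Cavalieri argument is shorter but only reaches $\theta<\frac{Q}{Q-s}$, which is all the statement requires.
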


\begin{proof}
    By Lemma \ref{lemPre:LebesgueDifferentiation}, it holds for $\mu$-almost all $x\in B_R(x_0)$ that
    \begin{align}\label{eqSobPoi:SobolevPoincareLebesgueDifferentiation}
        u(x)=\lim_{r\to 0}\fint_{B_r(x)}u\,\d\mu.
    \end{align}
    Fix some $x$ satisfying \eqref{eqSobPoi:SobolevPoincareLebesgueDifferentiation} and define a sequence of balls $B_i=B_{r_i}(x)$ with $r_i=2^{1-i}R$, $i\geq 0$. Using \eqref{eqSobPoi:SobolevPoincareLebesgueDifferentiation} and a telescopic sum, we may write
    \begin{align}\label{eqSobPoi:SobolevPoincareTelescopicSum}
        |u(x)-(u)_{B_R(x_0)}|\leq\sum_{i=0}^\infty|(u)_{B_{i+1}}-(u)_{B_i}|+|(u)_{B_0}-(u)_{B_R(x_0)}|.
    \end{align}
    For any $i$, we have $B_{i+1}\subseteq B_i$ and $\mu(B_i)\leq C\mu(B_{i+1})$ due to the doubling property. Therefore,
    \begin{align*}
        |(u)_{B_{i+1}}-(u)_{B_i}|&=\frac{1}{\mu(B_{i+1})}\left|\int_{B_{i+1}}(u-(u)_{B_i})\d\mu\right|\leq C\frac{1}{\mu(B_i)}\left|\int_{B_i}(u-(u)_{B_i})\d\mu\right| \\
        &\leq C\fint_{B_i}|u-(u)_{B_i}|\d\mu\leq C\left(\fint_{B_i}|u-(u)_{B_i}|^p\right)^\frac{1}{p}
    \end{align*}
    by Hölder's inequality. Similarly, $B_R(x_0)\subseteq B_{2R}(x)=B_0$ such that
    \begin{align*}
        |(u)_{B_0}-(u)_{B_R(x_0)}|\leq C\left(\fint_{B_0}|u-(u)_{B_0}|^p\d\mu\right)^\frac{1}{p}.
    \end{align*}
    Let us claim that, for any $\varepsilon>0$ and any sequence of nonnegative reals $a_i\geq 0$, it is true that
    \begin{align*}
        F\left(\sum_{i=1}^\infty a_i\right)\leq C\sum_{i=1}^\infty 2^{\varepsilon(q-1)i}F(a_i)
    \end{align*}
    for some $C=C(\varepsilon)>0$, provided the right hand side converges. Indeed, set
    \begin{align*}
        S=\sum_{i=1}^\infty 2^{-\varepsilon i}=S(\varepsilon)<\infty.
    \end{align*}
    By Jensen's inequality, we get that
    \begin{align*}
        F\left(\sum_{i=1}^\infty a_i\right)=F\left(\sum_{i=1}^\infty\frac{2^{-\varepsilon i}}{S} 2^{\varepsilon i}a_i\right)\leq\sum_{i=1}^\infty \frac{2^{-\varepsilon i}}{S}F\left(2^{\varepsilon i}a_i\right)\leq\frac{1}{S}\sum_{i=1}^\infty 2^{\varepsilon(q-1)i}F(a_i)
    \end{align*}
    as desired. Hence, we can substitute the previous estimates back into \eqref{eqSobPoi:SobolevPoincareTelescopicSum} and get
    \begin{align*}
        F\left(\frac{|u(x)-(u)_{B_R(x_0)}|}{R^s}\right)&\leq C\sum_{i=0}^\infty 2^{\varepsilon(q-1)i}F\left(\frac{|(u)_{B_{i+1}}-(u)_{B_i}|}{R^s}\right)+CF\left(\frac{|(u)_{B_0}-(u)_{B_R(x_0)}|}{R^s}\right) \\
        &\leq C\sum_{i=0}^\infty 2^{\varepsilon(q-1)i}F\left(R^{-s}\left(\fint_{B_i}|u-(u)_{B_i}|^p\d\mu\right)^\frac{1}{p}\right) \\
        &\leq C\sum_{i=0}^\infty 2^{\varepsilon(q-1)i}\cdot 2^{-spi}F\left(r_i^{-s}\left(\fint_{B_i}|u-(u)_{B_i}|^p\d\mu\right)^\frac{1}{p}\right).
    \end{align*}
    Now, we can apply \eqref{eqPre:PoincareAssumption} and the doubling property in order to obtain
    \begin{align*}
        F\left(\frac{|u(x)-(u)_{B_R(x_0)}|}{R^s}\right)\leq C\sum_{i=0}^\infty 2^{(\varepsilon(q-1)-sp)i}F\left(r_i^{t-s}\left(\fint_{B^i}\int_{B^i}\frac{|u(z)-u(y)|^p}{d(z,y)^{tp}}\nu(z,\dy)\mu(\dz)\right)^\frac{1}{p}\right)
    \end{align*}
    by defining $B^i=\lambda B_i$. By a similar argument as in the proof of Lemma \ref{lemSobPoi:JensenNu}, we can apply Jensen's inequality to the function $G$ of Corollary \ref{corPre:ConvexityComposition} and the outer $\mu$-integral. Then, applying Lemma \ref{lemSobPoi:JensenNu} to the inner $\nu$-integral eventually yields
    \begin{align}\label{eqSobPoi:SobolevPoincareApplicationPoincare}
        F\left(\frac{|u(x)-(u)_{B_R(x_0)}|}{R^s}\right)\leq C\sum_{i=0}^\infty 2^{(\varepsilon(q-1)-sp)i}\fint_{B^i}\int_{B^i}F\left(\frac{|u(z)-u(y)|}{d(z,y)^s}\right)\nu(z,\dy)\mu(\dz).
    \end{align}
    Set $\Lambda=1+2\lambda$ and define the function
    \begin{align*}
        g(z)=\int_{B_{\Lambda R}(x_0)}F\left(\frac{|u(z)-u(y)|}{d(z,y)^s}\right)\nu(z,\dy), \quad z\in B_{\Lambda R}(x_0),
    \end{align*}
    and its fractional maximal function in $B_{\Lambda R}(x_0)$ as in \eqref{eqSobPoi:FractionalMaximalFunction}. Since $B^i\subseteq B_{\Lambda R}(x_0)$, it follows from \eqref{eqSobPoi:SobolevPoincareApplicationPoincare} that
    \begin{align*}
        F\left(\frac{|u(x)-(u)_{B_R(x_0)}|}{R^s}\right)&\leq C\sum_{i=0}^\infty 2^{(\varepsilon(q-1)-sp)i}\fint_{B^i}g(z)\mu(\dz) \\
        &\leq\frac{C}{R^s}\sum_{i=0}^\infty 2^{(\varepsilon(q-1)-s(p-1))i}r_i^s\fint_{B^i}g(z)\mu(\dz)\leq\frac{C}{R^s}\M_{B_{\Lambda R}(x_0)}^s g(x).
    \end{align*}
    In the last step, we chose
    \begin{align*}
        \varepsilon=\frac{s(p-1)}{2(q-1)}=\varepsilon(s,p,q)>0
    \end{align*}
    in order to obtain a convergent geometric sum. Next, we may apply Lemma \ref{lemSobPoi:WeakTypeBoundFractionalMaximalFunction} to obtain that
    \begin{align}\label{eqSobPoi:SobolevPoincareWeakTypeBound}
        \left(\frac{\mu(\{x\in B_R(x_0)\mid F(R^{-s}|u(x)-(u)_{B_R(x_0)}|)>\tau\})}{\mu(B_R(x_0))}\right)^\frac{Q-s}{Q}\leq\frac{\overline{C}}{\tau}\fint_{B_{3\Lambda R}(x_0)}g\,\d\mu
    \end{align}
    for some specific constant $\overline{C}=\overline{C}(\C,s,t,p,q,\lambda,C_1,C_2)>0$. Define
    \begin{align*}
        A=\overline{C}\fint_{B_{3\Lambda R}(x_0)}\int_{B_{3\Lambda R}(x_0)}F\left(\frac{|u(x)-u(y)|}{d(x,y)^s}\right)\nu(x,\dy)\mu(\dx).
    \end{align*}
    Then \eqref{eqSobPoi:SobolevPoincareWeakTypeBound} gives that
    \begin{align*}
        \left(\frac{\mu(\{x\in B_R(x_0)\mid F(R^{-s}|u(x)-(u)_{B_R(x_0)}|)>\tau\})}{\mu(B_R(x_0))}\right)^\frac{Q-s}{Q}\leq\min\left\{1,\frac{A}{\tau}\right\}.
    \end{align*}
    Let us finally apply Cavalieri's principle for the function $\tau\mapsto\tau^\theta$, which yields
    \begin{align*}
        &\fint_{B_R(x_0)}F^\theta\left(\frac{|u-(u)_{B_R(x_0)}|}{R^s}\right)\d\mu \\
        &\quad =\theta\int_0^\infty\tau^{\theta-1}\frac{\mu(\{x\in B_R(x_0)\mid F(R^{-s}|u(x)-(u)_{B_R(x_0)}|)>\tau\})}{\mu(B_R(x_0))}\d\tau \\
        &\quad\leq\theta\int_0^A\tau^{\theta-1}\d\tau+C\int_A^\infty\tau^{\theta-1}\left(\frac{A}{\tau}\right)^\frac{Q}{Q-s}\d\tau\leq A^\theta+C A^\frac{Q}{Q-s}\int_A^\infty\tau^{\theta-\frac{Q}{Q-s}-1}\d\tau\leq CA^\theta,
    \end{align*}
    since $\theta<\frac{Q}{Q-s}$. Replacing $\Lambda$ by $3\Lambda$ now gives the desired result.
\end{proof}

\begin{rem}
    Let $\theta=1$. Then, in particular, Theorem \ref{thmSobPoi:SobolevPoincare} states that any fractional Poincar\'{e} inequality of the form \eqref{eqPre:PoincareAssumption} with some $t\in[0,s)$ implies a fractional Poincar\'{e} inequality with respect to the growth function $F$.
\end{rem}

\begin{rem}\label{remSobPoi:Remark1Poincare}
    One can also apply the usual weak-type bound for the maximal function of order zero to obtain Poincar\'{e} inequality with $F$-growth, i.e.\ $\theta=1$. It is straightforward to verify that the proof is stable as $p\to 1^+$, which means that the result remains true for $p=1$. Since we are allowed to choose $F(t)=t^p$, we can justify that the Poincar\'{e} inequality \eqref{eqIntro:PoincareAssumption} implies \eqref{eqPre:PoincareAssumption} for any $t\in[0,s)$. More generally, we notice that any Poincar\'{e} inequality
    \begin{align*}
        \int_{B_R(x_0)}|u-(u)_{B_R(x_0)}|^{p_0}\d\mu\lesssim R^{\tau p_0}\int_{B_{\lambda R}(x_0)}\int_{B_{\lambda R}(x_0)}\frac{|u(x)-u(y)|^{p_0}}{d(x,y)^{\tau p_0}}\nu(x,\dy)\mu(\dx)
    \end{align*}
    for $\lambda\geq 1$, $\tau\in[0,s)$ and $p_0\in[1,p]$ implies \eqref{eqPre:PoincareAssumption} for any $t\in(\tau,s)$ and a different $\lambda$. Hence, one may replace $p$ in \eqref{eqPre:PoincareAssumption} by any $p_0\in[1,p]$ and the results in this article remain true.
\end{rem}

The proof requires a dilated ball $B_{\Lambda R}(x_0)$ because no extension operator is assumed for the space $W^{s,F,\nu}$.

\section{Energy estimates}\label{sec:Energy}

By choosing certain test functions, we can deduce two important energy estimates for weak supersolutions to $\L u=0$, a Caccioppoli inequality and a logarithmic estimate in the spirit of \cite{CKW22,BKO23,BDNS25}.

\begin{lemma}[Caccioppoli inequality] \label{lemEnergy:Caccioppoli}
    Fix $x_0\in X$ and some radii $0<r<R\leq R_0$. Let $u\in W^{s,F,\nu}(B_R(x_0))\cap L_{s,\nu}^f(X)$ be a weak supersolution to $\L u=0$ in $B_R(x_0)$. For $\alpha\geq 0$, define $w=(u-\alpha)_-$. Then, for any $\eta\in C^{0,1}(B_R(x_0))$ with $0\leq\eta\leq 1$ and $\eta=0$ outside $B_r(x_0)$, it holds
    \begin{multline} \label{eqEnergy:Caccioppoli}
        \int_{B_R(x_0)}\int_{B_R(x_0)} F\left(\frac{|w(x)\eta^q(x)-w(y)\eta^q(y)|}{d(x,y)^s}\right)\nu(x,\dy)\mu(\dx) \\
        \leq C\int_{B_R(x_0)}\int_{B_R(x_0)} F\left(\frac{|\eta(x)-\eta(y)|}{d(x,y)^s}\max\{w(x),w(y)\}\right)\nu(x,\dy)\mu(\dx) \\
        +C\int_{B_R(x_0)} w(x)\eta^q(x)\mu(\dx)\sup_{x\in B_r(x_0)}\int_{X\setminus B_R(x_0)}f\left(\frac{w(y)}{d(x,y)^s}\right)\frac{\nu(x,\dy)}{d(x,y)^s}
    \end{multline}
    for some $C=C(p,q,\kappa)>0$.
\end{lemma}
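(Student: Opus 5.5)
We test the supersolution inequality \eqref{eqPre:Test} with $\varphi=w\eta^q$, where $w=(u-\alpha)_-$. This is a nonnegative admissible test function: $w$ is a normal contraction of $u-\alpha$, and $\eta$ is Lipschitz with support in $B_r(x_0)$. Combined with the tail control from $u\in L_{s,\nu}^f(X)$, this gives $\varphi\in W_0^{s,F,\nu}(B_R(x_0))$. Since $u$ is a supersolution and $w$ measures how far $u$ lies below $\alpha$, the testing gives
\begin{align*}
0\le \int_X\int_X h\Big(x,y,\tfrac{u(x)-u(y)}{d(x,y)^s}\Big)\frac{\varphi(x)-\varphi(y)}{d(x,y)^s}\,\nu(x,\dy)\mu(\dx).
\end{align*}
We split $X\times X$ into $B_R\times B_R$, $B_R\times(X\setminus B_R)$, $(X\setminus B_R)\times B_R$, and the complement, where $\varphi$ vanishes. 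The two mixed regions are merged into twice the integral over $B_R\times(X\setminus B_R)$ via Lemma \ref{lemPre:SymmetryFunctions}. This uses $h(x,y,t)=-h(y,x,-t)$, and it needs $\dist(B_r,X\setminus B_R)>0$, which holds because $\varphi$ is supported in $B_r(x_0)$.

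\textbf{Local part.} Fix $x,y\in B_R$ and write $a=w(x)$, $b=w(y)$; assume without loss of generality $a\ge b$. If $a=b=0$ the integrand vanishes. When $w(x)>0$ we have $u(x)-u(y)\le -(a-b)$ (with equality if both are positive), so by \eqref{eqPre:Comparability}
\begin{align*}
h\Big(x,y,\tfrac{u(x)-u(y)}{d^s}\Big)\big(\varphi(x)-\varphi(y)\big)\le -\kappa^{-1}f\Big(\tfrac{a-b}{d^s}\Big)\big(a\eta^q(x)-b\eta^q(y)\big)+\text{error}.
\end{align*}
This is the standard pointwise algebraic inequality from \cite{CKW22,BKO23}. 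Its full form is
\begin{align*}
-h(\cdot)\frac{\varphi(x)-\varphi(y)}{d^s}\ \ge\ c\,F\Big(\tfrac{|a\eta^q(x)-b\eta^q(y)|}{d^s}\Big)-C\,F\Big(\tfrac{|\eta(x)-\eta(y)|}{d^s}\max\{a,b\}\Big).
\end{align*}
To prove it, write $a\eta^q(x)-b\eta^q(y)=(a-b)\eta^q(x)+b(\eta^q(x)-\eta^q(y))$ and use $|\eta^q(x)-\eta^q(y)|\le q\max\{\eta(x),\eta(y)\}^{q-1}|\eta(x)-\eta(y)|$. Then apply Lemma \ref{lemPre:Young} with $\delta=\max\eta^{q-1}$ to absorb the cross term $f(\frac{a-b}{d^s})\,b\,\eta^{q-1}|\eta(x)-\eta(y)|/d^s$. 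The exponent $q/(q-1)$ in Lemma \ref{lemPre:Young} is exactly what makes $\delta^{q/(q-1)}=\eta^q$ match the good term, which explains the power $\eta^q$. Finally, $f(t)t\ge pF(t)$ together with the quasi-triangle inequality for $F$ converts $f(\frac{a-b}{d^s})\frac{a-b}{d^s}\eta^q$ into $F(|a\eta^q(x)-b\eta^q(y)|/d^s)$ up to the error term. The cases where $u(y)>\alpha$ (so $b=0$) are easier, because then $u(x)-u(y)\le -a$ and $f$ is monotone in the sense of the growth lemma.

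\textbf{Nonlocal part.} For $x\in B_r$ and $y\notin B_R$, only points with $w(x)>0$ contribute. There $u(x)-u(y)\ge -w(y)$, since $u(y)\ge\alpha-w(y)$ and $u(x)\le\alpha$ gives $u(x)-u(y)\ge -(w(y)-w(x))\ge -w(y)$. Hence
\begin{align*}
-h\Big(x,y,\tfrac{u(x)-u(y)}{d^s}\Big)\le \kappa f\Big(\tfrac{w(y)}{d(x,y)^s}\Big)
\end{align*}
by the sign property and \eqref{eqPre:UpperBoundNonStandardGrowthFunction} (when $u(x)-u(y)\ge 0$ the left side is $\le 0$). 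Multiplying by $\varphi(x)=w(x)\eta^q(x)$, integrating, and taking the supremum over $x\in B_r$ gives the tail term.

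Rearranging the local and nonlocal parts yields \eqref{eqEnergy:Caccioppoli}. The main obstacle is the pointwise inequality on $B_R\times B_R$ with general $F$: the case analysis on the signs of $u-\alpha$, and the exact bookkeeping of the powers of $\eta$ in the Young-inequality absorption with constants depending only on $p,q,\kappa$.
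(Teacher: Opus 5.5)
Your overall plan is the paper's: test with $\varphi=w\eta^q$, split into local and mixed parts, get $III=II$ from Lemma~\ref{lemPre:SymmetryFunctions}, and absorb with Lemma~\ref{lemPre:Young}. However, two steps fail as written.

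\textbf{Local part.} Take $a=w(x)\ge b=w(y)$ and your split $a\eta^q(x)-b\eta^q(y)=(a-b)\eta^q(x)+b(\eta^q(x)-\eta^q(y))$. Your good term is then $-\frac{p}{\kappa}F\bigl(\frac{a-b}{d(x,y)^s}\bigr)\eta^q(x)$, so it carries $\eta$ at the point where $w$ is larger. If $\eta(x)\ge\eta(y)$, the cross term $h\cdot b(\eta^q(x)-\eta^q(y))$ is already $\le 0$ because $h\le 0$, and nothing needs absorbing. The only case that matters is therefore $\eta(y)>\eta(x)$. There, Lemma~\ref{lemPre:Young} with $\delta=\max\{\eta(x),\eta(y)\}^{q-1}$ produces $\varepsilon(q-1)F\bigl(\frac{a-b}{d(x,y)^s}\bigr)\eta^q(y)$. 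This cannot be absorbed into a term carrying $\eta^q(x)$; take e.g.\ $\eta(x)=0<\eta(y)$. So the claimed match $\delta^{q/(q-1)}=\eta^q$ fails exactly where it is needed.

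The paper avoids this with the symmetric product rule $a_1b_1-a_2b_2=(a_1-a_2)\frac{b_1+b_2}{2}+\frac{a_1+a_2}{2}(b_1-b_2)$. The good term then carries $\frac{\eta^q(x)+\eta^q(y)}{2}$. Since $\max\{\eta(x),\eta(y)\}^{q-1}\le 2\bigl(\frac{\eta^q(x)+\eta^q(y)}{2}\bigr)^{(q-1)/q}$, one can take $\delta=\bigl(\frac{\eta^q(x)+\eta^q(y)}{2}\bigr)^{(q-1)/q}$ and the powers match. Alternatively, for $\eta(y)>\eta(x)$ you can split as $(a-b)\eta^q(y)+a(\eta^q(x)-\eta^q(y))$. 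Either way the error term involves $a=\max\{w(x),w(y)\}$, which is why the statement has the maximum. Your final conversion to $F(|a\eta^q(x)-b\eta^q(y)|/d(x,y)^s)$ is fine once the absorption is repaired.

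\textbf{Nonlocal part.} Your bound points the wrong way. The test inequality gives $-I\le 2\,II$ with $II=\int_{B_r(x_0)}\int_{X\setminus B_R(x_0)}h(\cdots)\,\varphi(x)\,\frac{\nu(x,\dy)}{d(x,y)^s}\mu(\dx)$, so you need an \emph{upper} bound on $h$, not on $-h$. Your claim $u(x)-u(y)\ge -w(y)$ is also false: take $u(y)\gg\alpha$, so $w(y)=0$ while $u(x)-u(y)<0$. With it, $-h\le\kappa f(w(y)/d(x,y)^s)$ fails too.

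From $u(x)=\alpha-w(x)$ and $u(y)\ge\alpha-w(y)$ you get the opposite inequality, $u(x)-u(y)\le w(y)-w(x)\le w(y)$. Hence $h\le 0$ if $u(x)\le u(y)$, and $h\le\kappa f\bigl(\frac{w(y)}{d(x,y)^s}\bigr)$ otherwise, by monotonicity of $f$. This is the paper's estimate for $II$.

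Finally, Lemma~\ref{lemPre:SymmetryFunctions} requires the absolute integrability \eqref{eqPre:SymmetryFunctionsIntegrability}, not just positive distance between the sets. The paper verifies it using \eqref{eqPre:TailIntegrabilityAssumption}, Lemma~\ref{lemPre:Young} and Lemma~\ref{lemPre:TailFiniteness}.
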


\begin{proof}
    We use $\varphi=w\eta^q$ as a test function in \eqref{eqPre:Test}, which yields
    \begin{align}
        0&\leq\int_X\int_Xh\left(x,y,\frac{u(x)-u(y)}{d(x,y)^s}\right)\frac{w(x)\eta^q(x)-w(y)\eta^q(y)}{d(x,y)^s}\nu(x,\dy)\mu(\dx) \nonumber \\
        &=\int_{B_R(x_0)}\int_{B_R(x_0)} h\left(x,y,\frac{u(x)-u(y)}{d(x,y)^s}\right)\frac{w(x)\eta^q(x)-w(y)\eta^q(y)}{d(x,y)^s}\nu(x,\dy)\mu(\dx) \nonumber \\
        &\quad +\int_{B_r(x_0)}\int_{X\setminus B_R(x_0)}h\left(x,y,\frac{u(x)-u(y)}{d(x,y)^s}\right)w(x)\eta^q(x)\frac{\nu(x,\dy)}{d(x,y)^s}\mu(\dx) \nonumber \\
        &\quad -\int_{X\setminus B_R(x_0)}\int_{B_r(x_0)} h\left(x,y,\frac{u(x)-u(y)}{d(x,y)^s}\right)w(y)\eta^q(y)\frac{\nu(x,\dy)}{d(x,y)^s}\mu(\dx)\eqqcolon I+II+III\label{eqEnergy:CaccioppoliTest}
    \end{align}
    as $\eta$ vanishes outside $B_r(x_0)$. The admissibility of this test function can be verified as usual. We first estimate $I$ and fix $x,y\in B_R(x_0)$ with $x\neq y$ and $u(x)\leq u(y)$. Set
    \begin{align*}
        J(x,y)=h\left(x,y,\frac{u(x)-u(y)}{d(x,y)^s}\right)\frac{w(x)\eta^q(x)-w(y)\eta^q(y)}{d(x,y)^s}
    \end{align*}
    and let us distinguish several cases. If $u(x),u(y)\geq\alpha$, then $J(x,y)=0$. We consider the case where $u(x),u(y)<\alpha$. Then $u(x)-u(y)=w(y)-w(x)$ and thus,
    \begin{align*}
        J(x,y)=h\left(x,y,\frac{w(y)-w(x)}{d(x,y)^s}\right)\frac{w(x)\eta^q(x)-w(y)\eta^q(y)}{d(x,y)^s}.
    \end{align*}
    Let us use the discrete product rule
    \begin{align*}
        a_1b_1-a_2b_2=(a_1-a_2)\frac{b_1+b_2}{2}+\frac{a_1+a_2}{2}(b_1-b_2)
    \end{align*}
    to write
    \begin{align*}
        J(x,y)&=h\left(x,y,\frac{w(y)-w(x)}{d(x,y)^s}\right)\frac{w(x)-w(y)}{d(x,y)^s}\frac{\eta^q(x)+\eta^q(y)}{2} \\
        &\quad +h\left(x,y,\frac{w(y)-w(x)}{d(x,y)^s}\right)\frac{w(x)+w(y)}{2}\frac{\eta^q(x)-\eta^q(y)}{d(x,y)^s}  \\
        &\leq -h\left(x,y,\frac{w(y)-w(x)}{d(x,y)^s}\right)\frac{w(y)-w(x)}{|w(y)-w(x)|}\frac{w(x)-w(y)}{d(x,y)^s}\frac{\eta^q(x)+\eta^q(y)}{2} \\
        &\quad +h\left(x,y,\frac{w(y)-w(x)}{d(x,y)^s}\right)w(x)\frac{|\eta^q(x)-\eta^q(y)|}{d(x,y)^s}\eqqcolon J_1(x,y)+J_2(x,y),
    \end{align*}
    since $w(x)\geq w(y)$. For $J_1$, we apply \eqref{eqPre:Comparability} and \eqref{eqPre:GrowthAssumption} to obtain
    \begin{align}
        J_1(x,y)&\leq -\kappa^{-1}f\left(\frac{w(x)-w(y)}{d(x,y)^s}\right)\frac{w(x)-w(y)}{d(x,y)^s}\frac{\eta^q(x)+\eta^q(y)}{2} \nonumber \\
        &\leq -\frac{p}{\kappa}F\left(\frac{w(x)-w(y)}{d(x,y)^s}\right)\frac{\eta^q(x)+\eta^q(y)}{2}. \label{eqEnergy:CaccioppoliEstimateJ1}
    \end{align}
    In order to estimate $J_2$, we use \eqref{eqPre:UpperBoundNonStandardGrowthFunction}, the mean value theorem in the form
    \begin{align*}
        |\eta^q(x)-\eta^q(y)|&\leq q\max\{\eta^{q-1}(x),\eta^{q-1}(y)\}|\eta(x)-\eta(y)|\leq 2q\left(\frac{\eta^q(x)+\eta^q(y)}{2}\right)^\frac{q-1}{q}|\eta(x)-\eta(y)|,
    \end{align*}
    and Lemma \ref{lemPre:Young} to obtain
    \begin{align*}
        J_2(x,y)&\leq\kappa f\left(\frac{w(x)-w(y)}{d(x,y)^s}\right)w(x)\frac{|\eta^q(x)-\eta^q(y)|}{d(x,y)^s} \\
        &\leq 2q\kappa f\left(\frac{w(x)-w(y)}{d(x,y)^s}\right)w(x)\left(\frac{\eta^q(x)+\eta^q(y)}{2}\right)^\frac{q-1}{q}\frac{|\eta(x)-\eta(y)|}{d(x,y)^s} \\
        &\leq 2\varepsilon q(q-1)\kappa F\left(\frac{w(x)-w(y)}{d(x,y)^s}\right)\frac{\eta^q(x)+\eta^q(y)}{2}+2q\varepsilon^{-q}\kappa F\left(\frac{|\eta(x)-\eta(y)|}{d(x,y)^s}w(x)\right)
    \end{align*}
    for $\varepsilon\in(0,1]$. By choosing
    \begin{align*}
        \varepsilon=\min\left\{\frac{p}{4q(q-1)\kappa^2},1\right\}
    \end{align*}
    and recalling \eqref{eqEnergy:CaccioppoliEstimateJ1}, we get that
    \begin{align*}
        J_2(x,y)\leq -\frac{J_1(x,y)}{2}+CF\left(\frac{|\eta(x)-\eta(y)|}{d(x,y)^s}w(x)\right)
    \end{align*}
    and therefore,
    \begin{align}
        J(x,y)&\leq\frac{J_1(x,y)}{2}+CF\left(\frac{|\eta(x)-\eta(y)|}{d(x,y)^s}w(x)\right) \nonumber \\
        &\leq -cF\left(\frac{w(x)-w(y)}{d(x,y)^s}\right)\eta^q(x)+CF\left(\frac{|\eta(x)-\eta(y)|}{d(x,y)^s}w(x)\right). \label{eqEnergy:CaccioppoliEstimateJFirstCase}
    \end{align}
    We also need to treat the case where $u(x)<\alpha\leq u(y)$, that is, $w(x)=\alpha-u(x)$ and $w(y)=0$. Here we have
    \begin{align*}
        J(x,y)&=h\left(x,y,\frac{u(x)-u(y)}{d(x,y)^s}\right)\frac{w(x)\eta^q(x)}{d(x,y)^s}\leq -\kappa^{-1} f\left(\frac{u(y)-u(x)}{d(x,y)^s}\right)\frac{w(x)\eta^q(x)}{d(x,y)^s} \\
        &\leq -\kappa^{-1}f\left(\frac{\alpha-u(x)}{d(x,y)^s}\right)\frac{w(x)\eta^q(x)}{d(x,y)^s}=-\kappa^{-1}f\left(\frac{w(x)}{d(x,y)^s}\right)\frac{w(x)\eta^q(x)}{d(x,y)^s}
    \end{align*}
    due to \eqref{eqPre:Comparability} and the monotonicity of $f$. Let us apply \eqref{eqPre:GrowthAssumption} and $w(y)=0$ to see that
    \begin{align*}
        J(x,y)\leq -\frac{p}{\kappa}F\left(\frac{w(x)}{d(x,y)^s}\right)\eta^q(x)=-\frac{p}{\kappa}F\left(\frac{w(x)-w(y)}{d(x,y)^s}\right)\eta^q(x)
    \end{align*}
    Note that we just showed that \eqref{eqEnergy:CaccioppoliEstimateJFirstCase} also holds true in this case for some constant $c>0$. By symmetry, we can conclude that, for any $x,y\in B_R(x_0)$ with $x\neq y$, it is true that
    \begin{align*}
        J(x,y)\leq -cF\left(\frac{|w(x)-w(y)|}{d(x,y)^s}\right)\min\{\eta^q(x),\eta^q(y)\}+CF\left(\frac{|\eta(x)-\eta(y)|}{d(x,y)^s}\max\{w(x),w(y)\}\right),
    \end{align*}
    whenever $u(x)\leq u(y)$. This allows to estimate $I$, namely
    \begin{align}
        I&\leq\int_{B_R(x_0)}\int_{B_R(x_0)} J(x,y)\nu(x,\dy)\mu(\dx) \nonumber \\
        &\leq -c\int_{B_R(x_0)}\int_{B_R(x_0)} F\left(\frac{|w(x)-w(y)|}{d(x,y)^s}\right)\min\{\eta^q(x),\eta^q(y)\}\nu(x,\dy)\mu(\dx) \nonumber \\
        &\quad +C\int_{B_R(x_0)}\int_{B_R(x_0)} F\left(\frac{|\eta(x)-\eta(y)|}{d(x,y)^s}\max\{w(x),w(y)\}\right)\nu(x,\dy)\mu(\dx). \label{eqEnergy:CaccioppoliEstimateI}
    \end{align}

    Next, we estimate $II$. For $x\in B_r(x_0)$ and $y\notin B_R(x_0)$, note that
    \begin{align*}
        h\left(x,y,\frac{u(x)-u(y)}{d(x,y)^s}\right)w(x)\leq \kappa f\left(\frac{w(y)}{d(x,y)^s}\right)w(x).
    \end{align*}
    Indeed, if $u(x)\geq\alpha$, both sides of the inequality vanish. If $u(x)\leq u(y)$, the left hand side is nonpositive such that the estimate becomes also trivial. So we only need to consider the case $u(y)\leq u(x)<\alpha$, where we have
    \begin{align*}
        h\left(x,y,\frac{u(x)-u(y)}{d(x,y)^s}\right)w(x)&\leq\kappa f\left(\frac{u(x)-u(y)}{d(x,y)^s}\right)w(x) \\
        &\leq\kappa f\left(\frac{\alpha-u(y)}{d(x,y)^s}\right)w(x)=\kappa f\left(\frac{w(y)}{d(x,y)^s}\right)w(x)
    \end{align*}
    due to \eqref{eqPre:Comparability}. Therefore,
    \begin{align}
        II&\leq\kappa\int_{B_r(x_0)}\int_{X\setminus B_R(x_0)}f\left(\frac{w(y)}{d(x,y)^s}\right)w(x)\eta^q(x)\frac{\nu(x,\dy)}{d(x,y)^s}\mu(\dx) \nonumber \\
        &\leq\kappa\int_{B_R(x_0)}w(x)\eta^q(x)\mu(\dx)\sup_{x\in B_r(x_0)}\int_{X\setminus B_R(x_0)}f\left(\frac{w(y)}{d(x,y)^s}\right)\frac{\nu(x,\dy)}{d(x,y)^s}. \label{eqEnergy:CaccioppoliEstimateII}
    \end{align}

    Finally, we want to apply Lemma \ref{lemPre:SymmetryFunctions} to see that $III=II$. For that, it suffices to prove that
    \begin{align*}
        \int_{B_r(x_0)}\int_{X\setminus B_R(x_0)}\left|h\left(x,y,\frac{u(x)-u(y)}{d(x,y)^s}\right)\right|w(x)\eta^q(x)\frac{\nu(x,\dy)}{d(x,y)^s}\mu(\dx)<\infty.
    \end{align*}
    In fact, we can calculate
    \begin{align*}
        &\int_{B_r(x_0)}\int_{X\setminus B_R(x_0)}\left|h\left(x,y,\frac{u(x)-u(y)}{d(x,y)^s}\right)\right|w(x)\eta^q(x)\frac{\nu(x,\dy)}{d(x,y)^s}\mu(\dx) \\
        &\quad\leq\kappa\int_{B_r(x_0)}\int_{X\setminus B_R(x_0)}f\left(\frac{|u(x)-u(y)|}{d(x,y)^s}\right)w(x)\eta^q(x)\frac{\nu(x,\dy)}{d(x,y)^s}\mu(\dx) \\
        &\quad\leq C\int_{B_r(x_0)}\int_{X\setminus B_R(x_0)}f\left(\frac{|u(x)|}{d(x,y)^s}\right)w(x)\frac{\nu(x,\dy)}{d(x,y)^s}\mu(\dx) \\
        &\quad\quad +C\int_{B_r(x_0)}\int_{X\setminus B_R(x_0)}f\left(\frac{|u(y)|}{d(x,y)^s}\right)w(x)\frac{\nu(x,\dy)}{d(x,y)^s}\mu(\dx).
    \end{align*}
    Set $\rho=R-r$. The first term is finite due to \eqref{eqPre:TailIntegrabilityAssumption}, \eqref{eqPre:GrowthAssumption} and Lemma \ref{lemPre:Young}, namely
    \begin{align*}
        &\int_{B_r(x_0)}\int_{X\setminus B_R(x_0)}f\left(\frac{|u(x)|}{d(x,y)^s}\right)w(x)\frac{\nu(x,\dy)}{d(x,y)^s}\mu(\dx) \\
        &\quad\leq\int_{B_r(x_0)}\int_{X\setminus B_\rho(x)}f\left(\frac{|u(x)|}{\rho^s}\left(\frac{\rho}{d(x,y)}\right)^s\right)(|u(x)|+\alpha)\frac{\nu(x,\dy)}{d(x,y)^s}\mu(\dx) \\
        &\quad\leq C\rho^{sp}\int_{B_r(x_0)}\frac{|u(x)|+\alpha}{\rho^s}f\left(\frac{|u(x)|}{\rho^s}\right)\int_{X\setminus B_\rho(x)}\frac{\nu(x,\dy)}{d(x,y)^{sp}}\mu(\dx) \\
        &\quad\leq C\int_{B_r(x_0)}F\left(\frac{|u|}{\rho^s}\right)\d\mu+CF\left(\frac{\alpha}{r^s}\right)\mu(B_r(x_0))<\infty.
    \end{align*}
    For the second term, we have
    \begin{align*}
        &\int_{B_r(x_0)}\int_{X\setminus B_R(x_0)}f\left(\frac{|u(y)|}{d(x,y)^s}\right)w(x)\frac{\nu(x,\dy)}{d(x,y)^s}\mu(\dx) \\
        &\quad\leq\int_{B_R(x_0)} w(x)\mu(\dx)\sup_{x\in B_r(x_0)}\int_{X\setminus B_R(x_0)}f\left(\frac{|u(y)|}{d(x,y)^s}\right)\frac{\nu(x,\dy)}{d(x,y)^s}<\infty
    \end{align*}
    by Lemma \ref{lemPre:TailFiniteness}, which also proves that the right hand side of \eqref{eqEnergy:Caccioppoli} is finite. Hence, Lemma \ref{lemPre:SymmetryFunctions} and the symmetry of $h$ imply that $III=II$.
    
    Therefore, we can combine \eqref{eqEnergy:CaccioppoliTest} with the estimates \eqref{eqEnergy:CaccioppoliEstimateI} and \eqref{eqEnergy:CaccioppoliEstimateII} to obtain 
    \begin{multline*}
        \int_{B_R(x_0)}\int_{B_R(x_0)} F\left(\frac{|w(x)-w(y)|}{d(x,y)^s}\right)\min\{\eta^q(x),\eta^q(y)\}\nu(x,\dy)\mu(\dx) \\
        \leq C\int_{B_R(x_0)}\int_{B_R(x_0)} F\left(\frac{|\eta(x)-\eta(y)|}{d(x,y)^s}\max\{w(x),w(y)\}\right)\nu(x,\dy)\mu(\dx) \\
        +C\int_{B_R(x_0)} w(x)\eta^q(x)\mu(\dx)\sup_{x\in B_r(x_0)}\int_{X\setminus B_R(x_0)}f\left(\frac{w(y)}{d(x,y)^s}\right)\frac{\nu(x,\dy)}{d(x,y)^s}.
    \end{multline*}
    Finally, it is left to notice that
    \begin{align*}
        &F\left(\frac{|w(x)\eta^q(x)-w(y)\eta^q(y)|}{d(x,y)^s}\right) \\
        &\quad\leq C\left(F\left(\frac{|w(x)-w(y)|}{d(x,y)^s}\min\{\eta^q(x),\eta^q(y)\}\right)+F\left(\frac{|\eta^q(x)-\eta^q(y)|}{d(x,y)^s}\max\{w(x),w(y)\}\right)\right) \\
        &\quad\leq CF\left(\frac{|w(x)-w(y)|}{d(x,y)^s}\right)\min\{\eta^{pq}(x),\eta^{pq}(y)\} \\
        &\quad\quad +F\left(q\max\{\eta^{q-1}(x),\eta^{q-1}(y)\}\frac{|\eta(x)-\eta(y)|}{d(x,y)^s}\max\{w(x),w(y)\}\right) \\
        &\quad\leq CF\left(\frac{|w(x)-w(y)|}{d(x,y)^s}\right)\min\{\eta^q(x),\eta^q(y)\}+CF\left(\frac{|\eta(x)-\eta(y)|}{d(x,y)^s}\max\{w(x),w(y)\}\right)
    \end{align*}
    as $0\leq\eta\leq 1$. Thus, the desired result follows from the previous estimate.
\end{proof}

\begin{rem}
    The analogous result for subsolutions $u$ also holds true for $w=(u-\alpha)_+$.
\end{rem}

The following logarithmic estimate is similar to \cite[Proposition 3.4]{BKO23}. However, we improved their estimate in two directions, even in the case $\nu(x,\dy)=|x-y|^{-d}\dy$ on $\Rd$. Our version allows for $p$-growth on the left hand side of the estimate and moreover, it has a fractional form of order almost $s$, while the logarithmic estimate of \cite{BKO23} has linear growth and order zero. These improvements allow us to treat a weaker Poincar\'{e} inequality \eqref{eqPre:PoincareAssumption}, despite being of independent interest.

\begin{lemma}[Logarithmic estimate] \label{lemEnergy:LogEstimate}
    Fix $x_0\in X$, $0<R\leq R_0$ and $a>0$. Let $u\in W^{s,F,\nu}(B_R(x_0))\cap L_{s,\nu}^f(X)$ be a weak supersolution to $\L u=0$ in $B_R(x_0)$ such that $u\geq 0$ in $B_R(x_0)$. Then, for any $0<r\leq R/4$,
    \begin{align*}
        \fint_{B_r(x_0)}\int_{B_r(x_0)}\left|\log\frac{u(x)+a}{u(y)+a}\right|^p\frac{\nu(x,\dy)}{d(x,y)^{tp}}\mu(\dx)\leq\frac{C}{r^{tp}}\left(1+\frac{r^s}{f\left(\frac{a}{r^s}\right)}\tail{u_-}{x_0}{R}\right)
    \end{align*}
    for some $C=C(\C,s,t,p,q,\kappa,C_2)>0$.
\end{lemma}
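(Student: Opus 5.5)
We test the weak supersolution property with $\varphi=\eta^q\,(u+a)^{1-p}\cdot G$-type functions adapted to Orlicz growth. The concrete choice is
\begin{align*}
    \varphi(x)=\eta^q(x)\,\frac{r^{s}}{f\bigl(\frac{u(x)+a}{r^s}\bigr)}\cdot\frac{1}{?}
\end{align*}
Equivalently, and more simply, we take $\varphi=\eta^q\,\dfrac{\bar u}{f(\bar u/r^s)}$ with $\bar u=u+a$, up to the normalising factor $r^{-s}$. Here $\eta$ is a Lipschitz cutoff with $\eta=1$ on $B_{r}(x_0)$, $\operatorname{supp}\eta\subseteq B_{2r}(x_0)$ and $|\eta(x)-\eta(y)|\le d(x,y)/r$. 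This is the Orlicz analogue of the classical $(u+a)^{1-p}\eta^p$ test function used in \cite{DKP16,BKO23}. Since $t\mapsto t/f(t)$ behaves like $t^{2-p}\cdot t^{-1}$, its derivative is comparable to $-(\bar u)^{-1}\cdot\bar u/f$.

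As in Lemma \ref{lemEnergy:Caccioppoli}, we split the tested form into the local part over $B_{R}\times B_{R}$ and two nonlocal parts. The nonlocal parts are identified via Lemma \ref{lemPre:SymmetryFunctions}, since $\eta$ is supported in $B_{2r}$ and $2r\le R/2$.

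\textbf{Local part.} Fix $x,y$ with $\bar u(x)\ge\bar u(y)$ and write $\bar u(x)=e^{\ell}\bar u(y)$. We distinguish two regimes. When $\ell$ is small, say $\bar u(x)\le 2\bar u(y)$, we linearise the difference quotient. In this regime $|\log(\bar u(x)/\bar u(y))|\simeq |\bar u(x)-\bar u(y)|/\bar u(x)$, and the integrand yields a negative term comparable to
\begin{align*}
    -\,F\Bigl(\tfrac{|\bar u(x)-\bar u(y)|}{d^s}\Bigr)\frac{r^s\,\eta^q}{\bar u\, f(\bar u/r^s)}.
\end{align*}
The error terms involve $|\eta(x)-\eta(y)|$, and Lemma \ref{lemPre:Young} absorbs them. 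When $\ell$ is large, the test function ratio gives a negative contribution of order $\min\{\eta^q(x),\eta^q(y)\}\,f(\cdot)/f(\cdot)\gtrsim 1$. In both regimes we then convert $F(|\bar u(x)-\bar u(y)|/d^s)/(\ldots)$ into a bound on
\begin{align*}
    \left|\log\frac{\bar u(x)}{\bar u(y)}\right|^p\left(\frac{r}{d(x,y)}\right)^{tp}
\end{align*}
on the set where $d(x,y)\le r$. For this we use $F(\lambda\tau)\ge\lambda^pF(\tau)$ for $\lambda\ge1$, applied with $\tau=\bar u/r^s$ and $\lambda\simeq|\log|\,(r/d)^s$. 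The gain $(r/d)^{(s-t)p}$ is available because $t<s$, and Lemma \ref{lemPre:distance_integrability_nu} is what makes integrating $(d/r)^{(s-t)p}$ against $\nu$ harmless. Pairs with $d(x,y)\ge r$ are bounded crudely by $|\log|^p\lesssim$ the same quantity, or handled directly by \eqref{eqPre:TailIntegrabilityAssumption}. The positive error terms from the local part have the form
\begin{align*}
    \iint F\Bigl(\frac{|\eta(x)-\eta(y)|}{d^s}\,\bar u\Bigr)\frac{r^s}{\bar u f(\bar u/r^s)}\,\d\nu\,\d\mu.
\end{align*}
These are $\lesssim\int_{B_{2r}}\int (d/r)^{(1-s)p}\,d^{-0}\,\nu\,\d\mu\lesssim\mu(B_{2r})$, by scaling $F$ with index $p$ when $d\le r$ (and index $q$ otherwise) and by Lemma \ref{lemPre:distance_integrability_nu}.

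\textbf{Nonlocal part.} Inside $B_R$ we have $u\ge0$, so the integrand is $\le f(\bar u(x)/d^s)\,r^s\bar u(x)/f(\bar u(x)/r^s)$. This is bounded by $C$ times \eqref{eqPre:TailIntegrabilityAssumption} using the $q$-scaling of $f$. Outside $B_R$, $u_-$ enters as $f(u_-(y)/d^s)$ divided by $f(a/r^s)$, with the factor $r^s$ as prefactor. This produces $\frac{r^s}{f(a/r^s)}\tail{u_-}{x_0}{R}\,\mu(B_{2r})$. Dividing by $\mu(B_r)$, which is legitimate by doubling, gives the claim with the factor $r^{-tp}$.

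\textbf{Main obstacle.} I expect the hardest step to be the pointwise algebraic inequality in the local part for general Orlicz $f$. It must extract $|\log|^p$ with the weight $d^{-tp}$, rather than the linear log of \cite{BKO23}, uniformly across both regimes. I would first try to prove it along the segment $\tau\mapsto\bar u(y)+\tau(\bar u(x)-\bar u(y))$ using $pF\le tf\le qF$. Failing that, I would use the scaling bounds for $f$ together with a separate treatment of $\bar u(x)\ge2\bar u(y)$.
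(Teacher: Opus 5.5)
Your proposal has a genuine gap at the first step: the test function is never correctly fixed. The first formula ends in a literal $\frac{1}{?}$. The ``equivalent'' choice $\varphi=r^{-s}\eta^q\,\bar u/f(\bar u/r^s)$ is not equivalent: for $F(\tau)=\tau^p$ it is a constant multiple of $\eta^q\bar u^{\,2-p}$. That does not depend on $\bar u$ when $p=2$ and increases in $\bar u$ when $p<2$, so it gives no negative term. Your heuristic that $t/f(t)$ behaves like $t^{2-p}\cdot t^{-1}$ conflates $t^{2-p}$ with $t^{1-p}$.

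The other candidate, $\eta^q r^s/f(\bar u/r^s)$, has the right size but still fails. The function $f$ is only nondecreasing and can be constant on an interval without violating $p\le \tau f(\tau)/F(\tau)\le q$. For pairs with $\bar u(x)/r^s$ and $\bar u(y)/r^s$ in such an interval, the difference of the test function at fixed $\eta$ vanishes, although $\log(\bar u(x)/\bar u(y))$ does not.

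The paper uses $\varphi=\eta^q\,\bar u/F(\bar u/r^s)$, with capital $F$. Then $g(\tau)=\tau/F(\tau/r^s)$ is quantitatively decreasing by the lower index alone:
$g'(\xi)=F(\xi/r^s)^{-1}\bigl(1-\tfrac{(\xi/r^s)f(\xi/r^s)}{F(\xi/r^s)}\bigr)\le-(p-1)/F(\xi/r^s)$, and $g(2\tau)\le 2^{1-p}g(\tau)$. This is where $p>1$ enters. Set $d=d(x,y)$ and $A=(\bar u(x)-\bar u(y))/d^s$. This test function yields the good term $-cF(A)\eta^q(x)/F(\bar u(y)/r^s)$ that you write down for $\bar u(y)\le\bar u(x)\le 2\bar u(y)$. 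With it, your handling of that regime is essentially the paper's:
\begin{itemize}
\item the mean value theorem;
\item absorption via Lemma \ref{lemPre:Young};
\item the conversion $F(\lambda\tau)\ge\lambda^pF(\tau)$, split according to $\lambda\ge 1$ or $\lambda<1$;
\item the remainder $(d/r)^{(s-t)p}$, integrated by Lemma \ref{lemPre:distance_integrability_nu}.
\end{itemize}

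The second gap is the pointwise inequality you flag as the main obstacle. Your sketch of the regime $\bar u(x)>2\bar u(y)$ does not work as stated. There $|g(\bar u(x))-g(\bar u(y))|\le g(\bar u(y))$, so the negative term is at most of size
$f(A)\,g(\bar u(y))\,\eta^q/d^s\simeq \frac{F(A)}{F(\bar u(y)/r^s)}\cdot\frac{\bar u(y)}{\bar u(x)-\bar u(y)}\,\eta^q$.
This is smaller than $F(A)/F(\bar u(y)/r^s)$ by an unbounded factor. Suppose, as in the first regime, $g(\bar u(y))$ multiplies $\eta^q(x)-\eta^q(y)$. Then the term $\varepsilon F(A)\eta^q/F(\bar u(y)/r^s)$ produced by Lemma \ref{lemPre:Young} cannot be absorbed. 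Also, there is no $F(A)/F(\cdot)$ good term to convert into $|\log|^p$ ``in both regimes''.

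The paper decomposes differently in this regime. It pairs $\eta^q(y)$ with $g(\bar u(x))-g(\bar u(y))$, and pairs the smaller value $g(\bar u(x))$ with $\eta^q(x)-\eta^q(y)$. It controls the latter by $a^q-b^q\le\overline{C}\varepsilon b^q+C_\varepsilon|a-b|^q$ together with $f(A)\,g(\bar u(x))\,|\eta(x)-\eta(y)|^q/d^s\lesssim (d/r)^{(1-s)q}$. It then extracts the logarithm from the weaker good term using $\log\tau\le\tau^\theta/\theta$ with $\theta=(p-1)/p$:
$\bigl(\log\frac{\bar u(x)}{\bar u(y)}\bigr)^p\lesssim\bigl(\frac{\bar u(x)-\bar u(y)}{\bar u(y)}\bigr)^{p-1}\lesssim\frac{f(A)}{f(\bar u(y)/r^s)}\bigl(\frac{d}{r}\bigr)^{s(p-1)}$.
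The right-hand side is the good term $f(A)(r/d)^s/f(\bar u(y)/r^s)$ times $(d/r)^{sp}\lesssim(d/r)^{tp}$.

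A smaller slip: in the nonlocal part, the upper bound $f(\tau(r/d)^s)\le\frac{q}{p}(r/d)^{s(p-1)}f(\tau)$ for $d\ge r$ uses the index $p$, not $q$. That is what makes \eqref{eqPre:TailIntegrabilityAssumption} applicable.
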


\begin{proof}
    Define $v=u+a$ and let $\eta\colon X\to[0,1]$ be a Lipschitz function with
    \begin{align*}
        \indicator_{B_r(x_0)}\leq\eta\leq\indicator_{B_{2r}(x_0)} \and \sup_{\substack{x,y\in X\\x\neq y}}\frac{|\eta(x)-\eta(y)|}{d(x,y)}\leq\frac{1}{2r-r}=\frac{1}{r}.
    \end{align*}
    The existence of such $\eta$ can be justified on any metric space. Indeed, one may take
    \begin{align*}
        \eta(x)=\max\left\{0,\min\left\{1,\frac{2r-d(x,x_0)}{r}\right\}\right\}.
    \end{align*}
    We will use
    \begin{align*}
        \varphi=\frac{v\eta^q}{F\left(\frac{v}{r^s}\right)}
    \end{align*}
    as a test function, which gives
    \begin{align}
        0&\leq\int_X\int_Xh\left(x,y,\frac{u(x)-u(y)}{d(x,y)^s}\right)\frac{\varphi(x)-\varphi(y)}{d(x,y)^s}\nu(x,\dy)\mu(\dx) \nonumber \\
        &=\int_{B_{4r}(x_0)}\int_{B_{4r}(x_0)}h\left(x,y,\frac{u(x)-u(y)}{d(x,y)^s}\right)\frac{\varphi(x)-\varphi(y)}{d(x,y)^s}\nu(x,\dy)\mu(\dx) \nonumber \\
        &\quad +\int_{B_{4r}(x_0)}\int_{X\setminus B_{4r}(x_0)}h\left(x,y,\frac{u(x)-u(y)}{d(x,y)^s}\right)\varphi(x)\frac{\nu(x,\dy)}{d(x,y)^s}\mu(\dx) \nonumber \\
        &\quad -\int_{X\setminus B_{4r}(x_0)}\int_{B_{4r}(x_0)}h\left(x,y,\frac{u(x)-u(y)}{d(x,y)^s}\right)\varphi(y)\frac{\nu(x,\dy)}{d(x,y)^s}\mu(\dx)\eqqcolon I+II+III. \label{eqEnergy:LogLemmaTest}
    \end{align}

    We start by estimating $I$. Fix $x,y\in B_{4r}(x_0)$ and define
    \begin{align*}
        J(x,y)=h\left(x,y,\frac{u(x)-u(y)}{d(x,y)^s}\right)\frac{\varphi(x)-\varphi(y)}{d(x,y)^s}.
    \end{align*}
    Assume first that $v(y)\leq v(x)\leq 2v(y)$. Since $u(x)-u(y)=v(x)-v(y)$, we have
    \begin{align*}
        J(x,y)&=h\left(x,y,\frac{v(x)-v(y)}{d(x,y)^s}\right)\left(\frac{v(x)}{F\left(\frac{v(x)}{r^s}\right)}-\frac{v(y)}{F\left(\frac{v(y)}{r^s}\right)}\right)\frac{\eta^q(x)}{d(x,y)^s} \\
        &\quad +h\left(x,y,\frac{v(x)-v(y)}{d(x,y)^s}\right)\frac{v(y)}{F\left(\frac{v(y)}{r^s}\right)}\frac{\eta^q(x)-\eta^q(y)}{d(x,y)^s}\eqqcolon J_1(x,y)+J_2(x,y).
    \end{align*}
    $J_1$ can be estimated as follows. By the mean value theorem, there exists $\xi\in[v(y),v(x)]$ with
    \begin{align*}
        \frac{v(x)}{F\left(\frac{v(x)}{r^s}\right)}-\frac{v(y)}{F\left(\frac{v(y)}{r^s}\right)}&=\frac{1}{F\left(\frac{\xi}{r^s}\right)}\left(1-\frac{\frac{\xi}{r^s}f\left(\frac{\xi}{r^s}\right)}{F\left(\frac{\xi}{r^s}\right)}\right)(v(x)-v(y)) \\
        &\leq -(p-1)\frac{v(x)-v(y)}{F\left(\frac{v(x)}{r^s}\right)}\leq -c\frac{v(x)-v(y)}{F\left(\frac{v(y)}{r^s}\right)}
    \end{align*}
    where we also used \eqref{eqPre:GrowthAssumption} and $v(x)\leq 2v(y)$. Hence, using \eqref{eqPre:Comparability} and \eqref{eqPre:GrowthAssumption} once more, we see that
    \begin{align*}
        J_1(x,y)\leq -\kappa^{-1}cf\left(\frac{v(x)-v(y)}{d(x,y)^s}\right)\frac{\eta^q(x)}{F\left(\frac{v(y)}{r^s}\right)}\frac{v(x)-v(y)}{{d(x,y)^s}}\leq -\overline{c}F\left(\frac{v(x)-v(y)}{d(x,y)^s}\right)\frac{\eta^q(x)}{F\left(\frac{v(y)}{r^s}\right)}
    \end{align*}
    for some $\overline{c}=\overline{c}(p,q,\kappa)>0$. For $J_2$, let us also distinguish two cases. If $\eta(x)\leq\eta(y)$, then $J_2(x,y)\leq 0$. If $\eta(x)>\eta(y)$, then we use \eqref{eqPre:UpperBoundNonStandardGrowthFunction}, the mean value theorem for the function $\tau\mapsto\tau^q$, and Lemma \ref{lemPre:Young}, which yield
    \begin{align*}
        J_2(x,y)&\leq q\kappa f\left(\frac{v(x)-v(y)}{d(x,y)^s}\right)\frac{v(y)}{F\left(\frac{v(y)}{r^s}\right)}\eta^{q-1}(x)\frac{\eta(x)-\eta(y)}{d(x,y)^s} \\
        &\leq q\kappa\left(\varepsilon(q-1)F\left(\frac{v(x)-v(y)}{d(x,y)^s}\right)\eta^q(x)+\varepsilon^{-q}F\left(\frac{\eta(x)-\eta(y)}{d(x,y)^s}v(y)\right)\right)\frac{1}{F\left(\frac{v(y)}{r^s}\right)}
    \end{align*}
    for any $\varepsilon\in(0,1]$. Let us choose
    \begin{align*}
        \varepsilon=\min\left\{\frac{\overline{c}}{2q(q-1)\kappa},1\right\}.
    \end{align*}
    Then
    \begin{align*}
        J_2(x,y)\leq -\frac{J_1(x,y)}{2}+CF\left(\frac{\eta(x)-\eta(y)}{d(x,y)^s}v(y)\right)\frac{1}{F\left(\frac{v(y)}{r^s}\right)}
    \end{align*}
    and by the choice of $\eta$, we have
    \begin{align*}
        F\left(\frac{\eta(x)-\eta(y)}{d(x,y)^s}v(y)\right)&\leq CF\left(\left(\frac{d(x,y)}{r}\right)^{1-s}\frac{v(y)}{r^s}\right)\leq C\left(\frac{d(x,y)}{r}\right)^{(1-s)p}F\left(\frac{v(y)}{r^s}\right).
    \end{align*}
    Therefore, we see that
    \begin{align}\label{eqEnergy:LogLemmaFirstEstimateJ}
        J(x,y)\leq -cF\left(\frac{v(x)-v(y)}{d(x,y)^s}\right)\frac{\eta^q(x)}{F\left(\frac{v(y)}{r^s}\right)}+C\left(\frac{d(x,y)}{r}\right)^{(1-s)p}.
    \end{align}
    Finally, let us use the mean value theorem for the logarithm to get
    \begin{align*}
        \left(\log\frac{v(x)}{v(y)}\right)^p\leq\left(\frac{v(x)-v(y)}{v(y)}\right)^p=\frac{\left(\frac{v(x)-v(y)}{d(x,y)^s}\right)^p}{\left(\frac{v(y)}{r^s}\right)^p}\left(\frac{d(x,y)}{r}\right)^{sp}.
    \end{align*}
    Set
    \begin{align*}
        A=\frac{v(x)-v(y)}{d(x,y)^s} \and B=\frac{v(y)}{r^s}.
    \end{align*}
    If $A\geq B$, then the monotonicity of the function $\tau\mapsto\frac{F(\tau)}{\tau^p}$ gives
    \begin{align*}
        \left(\log\frac{v(x)}{v(y)}\right)^p\leq\frac{A^p}{B^p}\left(\frac{d(x,y)}{r}\right)^{sp}\leq C\frac{F(A)}{F(B)}\frac{A^p}{F(A)}\frac{F(B)}{B^p}\left(\frac{d(x,y)}{r}\right)^{sp}\leq C\frac{F(A)}{F(B)}\left(\frac{d(x,y)}{r}\right)^{sp}.
    \end{align*}
    On the other hand, if $A<B$, then we write
    \begin{align*}
        \left(\log\frac{v(x)}{v(y)}\right)^p\leq\frac{A^p}{B^p}\left(\frac{d(x,y)}{r}\right)^{sp}\leq\left(\frac{d(x,y)}{r}\right)^{sp}.
    \end{align*}
    Therefore, in any case where $v(y)\leq v(x)\leq 2v(y)$ holds true, we find that
    \begin{align*}
        \left(\log\frac{v(x)}{v(y)}\right)^p&\leq C\left(\frac{F\left(\frac{v(x)-v(y)}{d(x,y)^s}\right)}{F\left(\frac{v(y)}{r^s}\right)}+1\right)\left(\frac{d(x,y)}{r}\right)^{sp} \\
        &\leq C\left(\frac{F\left(\frac{v(x)-v(y)}{d(x,y)^s}\right)}{F\left(\frac{v(y)}{r^s}\right)}+\left(\frac{d(x,y)}{r}\right)^{(s-t)p}\right)\left(\frac{d(x,y)}{r}\right)^{tp},
    \end{align*}
    where we also used that $d(x,y)\leq 8r$ and $t\leq s$. Thus, by \eqref{eqEnergy:LogLemmaFirstEstimateJ}, we get
    \begin{align*}
        J(x,y)\leq -c\eta^q(x)\left(\frac{r}{d(x,y)}\right)^{tp}\left(\log\frac{v(x)}{v(y)}\right)^p+C\left(\frac{d(x,y)}{r}\right)^{(s-t)p}+C\left(\frac{d(x,y)}{r}\right)^{(1-s)p}
    \end{align*}
    as $\eta^q(y)\leq 1$.

    Now, we will consider the case where $v(x)>2v(y)$. Here, we expand $J$ differently, namely
    \begin{align*}
        J(x,y)&=h\left(x,y,\frac{v(x)-v(y)}{d(x,y)^s}\right)\left(\frac{v(x)}{F\left(\frac{v(x)}{r^s}\right)}-\frac{v(y)}{F\left(\frac{v(y)}{r^s}\right)}\right)\frac{\eta^q(y)}{d(x,y)^s} \\
        &\quad +h\left(x,y,\frac{v(x)-v(y)}{d(x,y)^s}\right)\frac{v(x)}{F\left(\frac{v(x)}{r^s}\right)}\frac{\eta^q(x)-\eta^q(y)}{d(x,y)^s}\eqqcolon \tilde{J}_1(x,y)+\tilde{J}_2(x,y).
    \end{align*}
    Using the assumption $v(x)>2v(y)$ and the fact that $\tau\mapsto\frac{F(\tau)}{\tau}$ is non-decreasing, we see that
    \begin{align}
        \tilde{J}_1(x,y)&\leq h\left(x,y,\frac{v(x)-v(y)}{d(x,y)^s}\right)\left(\frac{2v(y)}{F\left(\frac{2v(y)}{r^s}\right)}-\frac{v(y)}{F\left(\frac{v(y)}{r^s}\right)}\right)\frac{\eta^q(y)}{d(x,y)^s} \nonumber \\
        &=h\left(x,y,\frac{v(x)-v(y)}{d(x,y)^s}\right)\frac{v(y)}{F\left(\frac{v(y)}{r^s}\right)}\left(2\frac{F\left(\frac{v(y)}{r^s}\right)}{F\left(\frac{2v(y)}{r^s}\right)}-1\right)\frac{\eta^q(y)}{d(x,y)^s} \nonumber \\
        &\leq -\overline{c}f\left(\frac{v(x)-v(y)}{d(x,y)^s}\right)\frac{v(y)}{F\left(\frac{v(y)}{r^s}\right)}\frac{\eta^q(y)}{d(x,y)^s} \label{eqEnergy:LogLemmaCase2Integrand1}
    \end{align}
    for $\overline{c}=\kappa^{-1}(1-2^{1-p})=\overline{c}(p,\kappa)>0$. Now, we use the inequality
    \begin{align*}
        a^q-b^q\leq\overline{C}\varepsilon b^q+C_\varepsilon|a-b|^q,
    \end{align*}
    that holds true for any $a,b\geq 0$, every $\varepsilon\in(0,1]$ and some $\overline{C}=\overline{C}(q)>0$ and $C_\varepsilon(q,\varepsilon)>0$ (\cite[Lemma 3.1]{DKP16}). Setting $a=\eta(x)$ and $b=\eta(y)$ gives the estimate
    \begin{align}
        \tilde{J}_2(x,y)&\leq\overline{C}\varepsilon\kappa f\left(\frac{v(x)-v(y)}{d(x,y)^s}\right)\frac{v(x)}{F\left(\frac{v(x)}{r^s}\right)}\frac{\eta^q(y)}{d(x,y)^s} \nonumber \\
        &\quad +C_\varepsilon \kappa f\left(\frac{v(x)-v(y)}{d(x,y)^s}\right)\frac{v(x)}{F\left(\frac{v(x)}{r^s}\right)}\frac{|\eta(x)-\eta(y)|}{d(x,y)^s} \nonumber \\
        &\leq -\frac{\tilde{J}_1(x,y)}{2}+Cf\left(\frac{v(x)-v(y)}{d(x,y)^s}\right)\frac{v(x)}{F\left(\frac{v(x)}{r^s}\right)}\frac{|\eta(x)-\eta(y)|^q}{d(x,y)^s}, \label{eqEnergy:LogLemmaCase2Integrand2}
    \end{align}
    where we chose
    \begin{align*}
        \varepsilon=\min\left\{\frac{\overline{c}}{2\overline{C}\kappa},1\right\}
    \end{align*}
    in the last step. As before, we use the properties of $\eta$ and \eqref{eqPre:GrowthAssumption} to get
    \begin{multline*}
        f\left(\frac{v(x)-v(y)}{d(x,y)^s}\right)\frac{v(x)}{F\left(\frac{v(x)}{r^s}\right)}\frac{|\eta(x)-\eta(y)|^q}{d(x,y)^s}\leq Cf\left(\frac{v(x)}{d(x,y)^s}\right)\frac{v(x)}{d(x,y)^s}\frac{1}{F\left(\frac{v(x)}{r^s}\right)} \left(\frac{d(x,y)}{r}\right)^q \\
        \leq C\frac{F\left(\frac{v(x)}{d(x,y)^s}\right)}{F\left(\frac{v(x)}{r^s}\right)}\left(\frac{d(x,y)}{r}\right)^q\leq C\left(\frac{d(x,y)}{r}\right)^{(1-s)q}\leq C\left(\frac{d(x,y)}{r}\right)^{(1-s)p},
    \end{multline*}
    where we also used that
    \begin{align*}
        F\left(\frac{v(x)}{d(x,y)^s}\right)=F\left(\frac{v(x)}{r^s}\left(\frac{r}{d(x,y)}\right)^s\right)\leq CF\left(\frac{v(x)}{r^s}\right)\left(\frac{r}{d(x,y)}\right)^{sq}.
    \end{align*}
    Note also that due to \eqref{eqPre:GrowthAssumption}, we have
    \begin{align*}
        \frac{1}{d(x,y)^s}\frac{v(y)}{F\left(\frac{v(y)}{r^s}\right)}=\frac{\frac{v(y)}{r^s}}{F\left(\frac{v(y)}{r^s}\right)}\left(\frac{r}{d(x,y)}\right)^s\geq\frac{c}{f\left(\frac{v(y)}{r^s}\right)}\left(\frac{r}{d(x,y)}\right)^s
    \end{align*}
    such that it will follow from \eqref{eqEnergy:LogLemmaCase2Integrand1} and \eqref{eqEnergy:LogLemmaCase2Integrand2} that
    \begin{align}\label{eqEnergy:LogLemmaEstimateJSecondCase}
        J(x,y)\leq -cf\left(\frac{v(x)-v(y)}{d(x,y)^s}\right)\frac{\eta^q(y)}{f\left(\frac{v(y)}{r^s}\right)}\left(\frac{r}{d(x,y)}\right)^s+C\left(\frac{d(x,y)}{r}\right)^{(1-s)p}.
    \end{align}
    Let us apply the inequality $\log\tau\leq\tau^\theta/\theta$, which holds for any $\theta>0$ and $\tau\geq 1$. In fact, we can apply it for $\theta=(p-1)/p$, which yields
    \begin{align*}
        \left(\log\frac{v(x)}{v(y)}\right)^p\leq C\left(\frac{v(x)}{v(y)}\right)^{p-1}\leq C\left(\frac{2(v(x)-v(y))}{v(y)}\right)^{p-1}\leq C\left(\frac{v(x)-v(y)}{v(y)}\right)^{p-1},
    \end{align*}
    where we also used that $v(x)>2v(y)$. Since $d(x,y)\leq 8r$ and $v(x)-v(y)>v(y)$, we have
    \begin{align*}
        f\left(\frac{v(y)}{r^s}\right)&=f\left(\frac{v(x)-v(y)}{d(x,y)^s}\left(\frac{d(x,y)}{r}\right)^s\frac{v(y)}{v(x)-v(y)}\right) \\ 
        &\leq Cf\left(\frac{v(x)-v(y)}{d(x,y)^s}\right)\left(\frac{d(x,y)}{r}\right)^{s(p-1)}\left(\frac{v(y)}{v(x)-v(y)}\right)^{p-1}
    \end{align*}
    and therefore,
    \begin{align*}
        \left(\log\frac{v(x)}{v(y)}\right)^p\leq C\frac{f\left(\frac{v(x)-v(y)}{d(x,y)^s}\right)}{f\left(\frac{v(y)}{r^s}\right)}\left(\frac{d(x,y)}{r}\right)^{s(p-1)}\leq C\frac{f\left(\frac{v(x)-v(y)}{d(x,y)^s}\right)}{f\left(\frac{v(y)}{r^s}\right)}\left(\frac{r}{d(x,y)}\right)^s\left(\frac{d(x,y)}{r}\right)^{tp}
    \end{align*}
    as $t\leq s$. By substituting this observation back into \eqref{eqEnergy:LogLemmaEstimateJSecondCase}, we arrive at
    \begin{align*}
        J(x,y)\leq -c\left(\frac{r}{d(x,y)}\right)^{tp}\eta^q(y)\left(\log\frac{v(x)}{v(y)}\right)^p+C\left(\frac{d(x,y)}{r}\right)^{(1-s)p}.
    \end{align*}
    We have now found estimates for $J$ when $v(x)\geq v(y)$. By symmetry, we can conclude that, for any $x,y\in B_{4r}(x_0)$, it is true that
    \begin{align*}
        J(x,y)\leq -c\left(\frac{r}{d(x,y)}\right)^{tp}\min\left\{\eta^q(x),\eta^q(y)\right\}\left|\log\frac{v(x)}{v(y)}\right|^p+C\left(\frac{d(x,y)}{r}\right)^\delta,
    \end{align*}
    where we defined $\delta=\min\{(1-s)p,(s-t)p\}>0$. Therefore,
    \begin{align*}
        I&=\int_{B_{4r}(x_0)}\int_{B_{4r}(x_0)}J(x,y)\nu(x,\dy)\mu(\dx) \\
        &\leq -c\int_{B_{4r}(x_0)}\int_{B_{4r}(x_0)}\left(\frac{r}{d(x,y)}\right)^{tp}\min\left\{\eta^q(x),\eta^q(y)\right\}\log\left|\frac{v(x)}{v(y)}\right|\nu(x,\dy)\mu(\dx) \\
        &\quad +C\int_{B_{4r}(x_0)}\int_{B_{4r}(x_0)}\left(\frac{d(x,y)}{r}\right)^\delta\nu(x,\dy)\mu(\dx).
    \end{align*}
    The second term on the right hand side can directly be estimated by $C\mu(B_r(x_0))$ using Lemma \ref{lemPre:distance_integrability_nu} and the doubling property. Thus, we may write
    \begin{align}
        &\int_{B_r(x_0)}\int_{B_r(x_0)}\left|\log\frac{v(x)}{v(y)}\right|^p\frac{\nu(x,\dy)}{d(x,y)^{tp}}\mu(\dx) \nonumber \\
        &\quad\leq r^{-tp}\int_{B_{4r}(x_0)}\int_{B_{4r}(x_0)}\left(\frac{r}{d(x,y)}\right)^{tp}\min\left\{\eta^q(x),\eta^q(y)\right\}\left|\log\frac{v(x)}{v(y)}\right|^p\nu(x,\dy)\mu(\dx) \nonumber \\
        &\quad\leq\frac{C}{r^{tp}}(\mu(B_r(x_0))-I). \label{eqEnergy:LogLemmaEstimate1}
    \end{align}
    
    Next, let us estimate $II$. For any $x\in B_{2r}(x_0)$ and $y\in X\setminus B_{4r}(x_0)$, we have $u(x)\geq 0$ such that
    \begin{align*}
        h\left(x,y,\frac{u(x)-u(y)}{d(x,y)^s}\right)\leq\kappa f\left(\frac{(u(x)-u(y))_+}{d(x,y)^s}\right)\leq Cf\left(\frac{u(x)}{d(x,y)^s}\right)+Cf\left(\frac{u_-(y)}{d(x,y)^s}\right)
    \end{align*}
    by \eqref{eqPre:Comparability}. Moreover, \eqref{eqPre:GrowthAssumption} implies that
    \begin{align*}
        \frac{v(x)}{F\left(\frac{v(x)}{r^s}\right)}=\frac{\frac{v(x)}{r^s}}{F\left(\frac{v(x)}{r^s}\right)}r^s\leq q\frac{r^s}{f\left(\frac{v(x)}{r^s}\right)}.
    \end{align*}
    Since $\eta$ vanishes outside $B_{2r}(x_0)$, we have seen that
    \begin{align*}
        II&\leq C\int_{B_{2r(x_0)}}\int_{X\setminus B_{4r}(x_0)}f\left(\frac{u(x)}{d(x,y)^s}\right)\frac{1}{f\left(\frac{v(x)}{r^s}\right)}\left(\frac{r}{d(x,y)}\right)^s\nu(x,\dy)\mu(\dx) \\
        &\quad +C\int_{B_{4r}(x_0)}\int_{X\setminus B_{4r}(x_0)}f\left(\frac{u_-(y)}{d(x,y)^s}\right)\frac{1}{f\left(\frac{v(x)}{r^s}\right)}\left(\frac{r}{d(x,y)}\right)^s\nu(x,\dy)\mu(\dx)\eqqcolon II_1+II_2.
    \end{align*}
    Since $u(x)\leq v(x)$, we can estimate
    \begin{align*}
        II_1&\leq C\int_{B_{2r}(x_0)}\int_{X\setminus B_{4r}(x_0)}\frac{f\left(\frac{v(x)}{d(x,y)^s}\right)}{f\left(\frac{v(x)}{r^s}\right)}\left(\frac{r}{d(x,y)}\right)^s\nu(x,\dy)\mu(\dx) \\
        &=C\int_{B_{2r}(x_0)}\int_{X\setminus B_{4r}(x_0)}\frac{f\left(\frac{v(x)}{r^s}\left(\frac{r}{d(x,y)}\right)^s\right)}{f\left(\frac{v(x)}{r^s}\right)}\left(\frac{r}{d(x,y)}\right)^s\nu(x,\dy)\mu(\dx) \\
        &\leq Cr^{sp}\int_{B_{2r}(x_0)}\int_{X\setminus B_{2r}(x)}\frac{\nu(x,\dy)}{d(x,y)^{sp}}\mu(\dx)\leq C\mu(B_r(x_0))
    \end{align*}
    by \eqref{eqPre:TailIntegrabilityAssumption} and the doubling property. For $II_2$, we just have to notice that $u(y)\geq 0$ if $y\in B_R(x_0)$ such that
    \begin{align*}
        II_2&=C\int_{B_{2r}(x_0)}\int_{X\setminus B_R(x_0)}f\left(\frac{u_-(y)}{d(x,y)^s}\right)\frac{1}{f\left(\frac{v(x)}{r^s}\right)}\left(\frac{r}{d(x,y)}\right)^s\nu(x,\dy)\mu(\dx) \\
        &\leq\frac{Cr^s}{f\left(\frac{a}{r^s}\right)}\int_{B_{2r}(x_0)}\int_{X\setminus B_R(x_0)}f\left(\frac{u_-(y)}{d(x,y)^s}\right)\frac{\nu(x,\dy)}{d(x,y)^s}\mu(\dx)\leq\frac{Cr^s}{f\left(\frac{a}{r^s}\right)}\mu(B_r(x_0))\tail{u_-}{x_0}{R},
    \end{align*}
    since $B_{2r}(x_0)\subseteq B_{R/2}(x_0)$ due to the assumption $r\leq R/4$. Hence, we found that
    \begin{align}\label{eqEnergy:LogLemmaEstimate2}
        II\leq C\mu(B_r(x_0))\left(1+\frac{r^s}{f\left(\frac{a}{r^s}\right)}\tail{u_-}{x_0}{R}\right).
    \end{align}

    Let us finally estimate $III$ by applying Lemma \ref{lemPre:SymmetryFunctions}. By similar arguments as already performed above, we see that
    \begin{align*}
        &\int_{B_{2r}(x_0)}\int_{X\setminus B_{4r}(x_0)}\left|h\left(x,y,\frac{u(x)-u(y)}{d(x,y)^s}\right)\varphi(x)\right|\frac{\nu(x,\dy)}{d(x,y)^s}\mu(\dx) \\
        &\quad\leq C\int_{B_{2r}(x_0)}\int_{X\setminus B_{4r}(x_0)}f\left(\frac{u(x)}{d(x,y)^s}\right)\frac{v(x)}{F\left(\frac{v(x)}{r^s}\right)}\frac{\nu(x,\dy)}{d(x,y)^s}\mu(\dx) \\
        &\quad\quad +C\int_{B_{2r}(x_0)}\int_{X\setminus B_{4r}(x_0)}f\left(\frac{|u(y)|}{d(x,y)^s}\right)\frac{v(x)}{F\left(\frac{v(x)}{r^s}\right)}\frac{\nu(x,\dy)}{d(x,y)^s}\mu(\dx) \\
        &\quad\leq C\mu(B_r(x_0))\left(1+\frac{r^s}{f\left(\frac{a}{r^s}\right)}\tail{u}{x_0}{4r}\right)<\infty.
    \end{align*}
    Hence, we can indeed apply Lemma \ref{lemPre:SymmetryFunctions} to obtain that $III=II$ using the symmetry of $h$. 

    We can conclude the proof by combining \eqref{eqEnergy:LogLemmaEstimate1}, \eqref{eqEnergy:LogLemmaTest} and \eqref{eqEnergy:LogLemmaEstimate2} to obtain
    \begin{align*}
        \fint_{B_r(x_0)}\int_{B_r(x_0)}\left|\log\frac{v(x)}{v(y)}\right|^p\frac{\nu(x,\dy)}{d(x,y)^{tp}}\mu(\dx)&\leq\frac{C}{r^{tp}}\frac{\mu(B_r(x_0))-I}{\mu(B_r(x_0))}\leq\frac{C}{r^{tp}}\left(1+\frac{II+III}{\mu(B_r(x_0))}\right) \\
        &\leq\frac{C}{r^{tp}}\left(1+\frac{r^s}{f\left(\frac{a}{r^s}\right)}\tail{u_-}{x_0}{R}\right).
    \end{align*}
\end{proof}

Since we do not invoke \eqref{eqPre:PoincareAssumption} in the proof of Lemma \ref{lemEnergy:LogEstimate}, the statement holds true for any $t\in[0,s)$.

Inspired by the method of \cite{Coz17}, the authors of \cite{CKW22} completely bypassed the logarithmic estimate. To the best of our knowledge, this method relies heavily on a pointwise lower bound on the kernel. Since we are not assuming any pointwise bounds on $\nu(x,\cdot)$, not even the existence of a density with respect to $\mu$, we used the logarithmic estimate.

\section{Expansion of Positivity}\label{sec:EoP}

The key ingredient for the proof of Hölder regularity via De Giorgi's technique is an expansion of positivity lemma, which forces a decay of oscillation on a smaller ball and in turn, leads to Hölder continuity. Together with the logarithmic estimate, we have to perform a De Giorgi iteration in order to prove Lemma \ref{lemEoP:CriticalMass} below, which one might call a critical mass lemma. We use the well-known fast convergence lemma of De Giorgi.

\begin{lemma}[{\cite[Lemma 7.1]{Giu03}}]\label{lemEoP:DeGiorgiFastConvergence}
    Let $\{A_i\}_{i=0}^\infty$ be a sequence of nonnegative reals, $\overline{C},t>0$ and $\beta>1$. If
    \begin{align*}
        A_{i+1}\leq\overline{C}\beta^iA_i^{1+t} \enspace\text{for all}\  i\geq 0 \and A_0\leq\overline{C}^{-1/t}\beta^{-1/t^2},
    \end{align*}
    then $A_i\to 0$ as $i\to\infty$.
\end{lemma}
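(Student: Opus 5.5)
The lemma is the classical fast geometric convergence lemma, and I would prove it by showing by induction that $A_i$ decays geometrically. Concretely, I claim that
\begin{align*}
    A_i\leq\beta^{-i/t}A_0 \enspace\text{for all}\ i\geq 0.
\end{align*}
Since $\beta>1$ and $t>0$, we have $\beta^{-1/t}<1$, so this bound immediately gives $A_i\to 0$. The case $i=0$ is trivial.

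For the inductive step, assume $A_i\leq\beta^{-i/t}A_0$. The recursive hypothesis gives
\begin{align*}
    A_{i+1}\leq\overline{C}\beta^iA_i^{1+t}\leq\overline{C}\beta^i\beta^{-i(1+t)/t}A_0^{1+t}=\overline{C}\beta^{-i/t}A_0^{t}\,A_0.
\end{align*}
It remains to show $\overline{C}A_0^t\leq\beta^{-1/t}$. By the smallness assumption on $A_0$, we have $A_0^t\leq\overline{C}^{-1}\beta^{-1/t}$, which is exactly this inequality. Hence $A_{i+1}\leq\beta^{-(i+1)/t}A_0$, which closes the induction.

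I do not expect any real obstacle here. The only point requiring care is the exponent bookkeeping, namely the identity $i-i(1+t)/t=-i/t$, together with the observation that the smallness condition is calibrated to absorb exactly one extra factor $\beta^{-1/t}$. Nonnegativity of the $A_i$ is needed so that raising to the power $1+t$ preserves the inequalities.
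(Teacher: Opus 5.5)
Your proof is correct: the induction $A_i\leq\beta^{-i/t}A_0$, using the exponent identity $i-i(1+t)/t=-i/t$ and the smallness condition to absorb exactly one factor $\beta^{-1/t}$, is the standard argument behind Giusti's Lemma 7.1. The paper gives no proof of its own and only cites that result, so your approach coincides with the intended one.
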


\begin{lemma} \label{lemEoP:CriticalMass}
    Fix $x_0\in X$, $0<R\leq\frac{R_0}{2\lambda}$ and $a>0$. Let $u\in W^{s,F,\nu}(B_R(x_0))\cap L_{s,\nu}^f(X)$ be a weak supersolution to $\L u=0$ in $B_R(x_0)$ such that $u\geq 0$ in $B_R(x_0)$. Moreover, let $0<r\leq R/2$. There exists some $\gamma=\gamma(\C,s,t,p,q,\kappa,\lambda,C_1,C_2)>0$ such that if
    \begin{align}\label{eqEoP:CriticalMassAssumption}
        \mu(\{u\leq 2a\}\cap B_{2r}(x_0))\leq\gamma\mu(B_{2r}(x_0)),
    \end{align}
    then
    \begin{align}\label{eqEoP:CriticalMassTailResult}
        \tail{u_-}{x_0}{R}>\frac{1}{r^s}f\left(\frac{a}{r^s}\right)
    \end{align}
    or
    \begin{align}\label{eqEoP:CriticalMassToProve}
        \inf_{B_r(x_0)}u\geq a.
    \end{align}
\end{lemma}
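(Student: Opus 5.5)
We run a De Giorgi iteration on the negative parts of $u$ below decreasing levels, assuming that \eqref{eqEoP:CriticalMassTailResult} fails, i.e.\ $\tail{u_-}{x_0}{R}\leq r^{-s}f(a/r^s)$. The goal is to show that the normalized measure of the sublevel sets tends to zero, which gives \eqref{eqEoP:CriticalMassToProve}.

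\textbf{Setup.} For $i\geq 0$ we set
\begin{align*}
r_i=r+2^{-i}r,\quad \tilde r_i=\tfrac{r_i+r_{i+1}}{2},\quad B_i=B_{r_i}(x_0),\quad k_i=a+2^{-i}a,\quad w_i=(u-k_i)_-,
\end{align*}
and
\begin{align*}
A_i=\frac{\mu(\{u\leq k_i\}\cap B_i)}{\mu(B_i)}.
\end{align*}
Then $A_0\leq\gamma$ by \eqref{eqEoP:CriticalMassAssumption}, and $A_i\to 0$ forces $u\geq a$ $\mu$-a.e.\ in $B_r(x_0)$.

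\textbf{Caccioppoli step.} Choose Lipschitz cutoffs $\eta_i$ with $\indicator_{B_{\tilde r_i}}\leq\eta_i\leq\indicator_{B_{r_i}}$ and Lipschitz constant at most $C2^i/r$. Apply Lemma \ref{lemEnergy:Caccioppoli} with the outer ball $B_{2r}(x_0)$ or $B_R(x_0)$, level $k_i$ and $\eta_i$. Since $w_i\leq 2a$ on $B_R(x_0)$ (because $u\geq0$ there), the local term is bounded as follows:
\begin{itemize}
\item for $d(x,y)\leq r$ we use $|\eta_i(x)-\eta_i(y)|\leq C2^i d(x,y)/r$ together with Lemma \ref{lemPre:distance_integrability_nu} on $d^{(1-s)p}$;
\item for far pairs we use \eqref{eqPre:TailIntegrabilityAssumption}.
\end{itemize}
This gives a bound $C2^{iq}F(a/r^s)\mu(B_i)A_i$, because the integrand vanishes unless $\max\{w_i(x),w_i(y)\}>0$. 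Keeping the $x$-support is a symmetrization issue that we handle with Lemma \ref{lemPre:SymmetryFunctions} or by bounding with both indicators.

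For the tail term, $w_i(y)\leq 2a+u_-(y)$. Splitting $X\setminus B_{r_i}$ into $B_R\setminus B_{r_i}$ and $X\setminus B_R$, we again use \eqref{eqPre:TailIntegrabilityAssumption} for the first piece and the negated alternative $\tail{u_-}{x_0}{R}\leq r^{-s}f(a/r^s)$ for the second. We need $x\in B_{\tilde r_i}$ to stay separated by $\sim2^{-i}r$ from the complement of $B_{r_i}$, which costs a factor $2^{i(s+q)}$. Combined with $\int w_i\eta_i^q\leq 2a\mu(B_i)A_i$ and $af(a/r^s)/r^s\leq qF(a/r^s)$, the whole right hand side is $\leq C2^{i\sigma}F(a/r^s)\mu(B_i)A_i$ for some fixed $\sigma$.

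\textbf{Sobolev step.} Apply Theorem \ref{thmSobPoi:SobolevPoincare} with some $\theta\in(1,\frac{Q}{Q-s})$ to $w_i\eta_i^q$ on a ball $B_{\rho}(x_0)$ with $\Lambda\rho\leq r_i$, for instance $\rho\sim r/\Lambda$. The mean-value term is handled in the standard way: $w_i\eta_i^q$ has small support, so $|(w_i\eta_i^q)_{B_\rho}|$ is bounded by the average, which is $\leq 2aA_i$-small. Alternatively, the Poincaré mean can be absorbed as in the classical argument since $\mu(\operatorname{supp})\leq A_i\mu(B_i)\ll\mu(B_\rho)$ once $\gamma$ is small.

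This placement is the step I expect to be the main obstacle. The dilation $\Lambda$ means the Sobolev inequality must be applied on balls covering $B_{r_{i+1}}$ of radius comparable to $2^{-i}r$, using a covering argument and doubling, rather than on one ball.

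On $\{u\leq k_{i+1}\}\cap B_{i+1}$ we have $w_i\geq k_i-k_{i+1}=2^{-i-1}a$, hence
\begin{align*}
F(2^{-i-1}a/r^s)\,A_{i+1}^{1/\theta}\lesssim\Big(\fint F^\theta(w_i\eta_i^q/r^s)\Big)^{1/\theta}.
\end{align*}
The latter is $\lesssim$ the energy divided by $\mu(B_i)$, plus a mean term $\lesssim 2^{i\sigma}F(a/r^s)A_i$. Using $F(2^{-i-1}b)\geq 2^{-(i+1)q}F(b)$ we conclude
\begin{align*}
A_{i+1}\leq \overline C\beta^iA_i^{\theta}.
\end{align*}

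\textbf{Conclusion.} Apply Lemma \ref{lemEoP:DeGiorgiFastConvergence} with exponent $\theta-1>0$ and choose $\gamma=\overline C^{-1/(\theta-1)}\beta^{-1/(\theta-1)^2}$, which depends only on the data. Then $A_i\to0$, and hence $\inf_{B_r(x_0)}u\geq a$.
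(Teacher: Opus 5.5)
Your scheme matches the paper's: a De Giorgi iteration with Caccioppoli, the Sobolev--Poincar\'{e} inequality with $\theta>1$, and Lemma \ref{lemEoP:DeGiorgiFastConvergence}. Your single truncation level is also fine. The paper uses two levels $\beta_i<\alpha_i$, but since $w_i\le 2a$ on $B_R(x_0)$ and $\mu(\{w_i>0\}\cap B_i)\le A_i\mu(B_i)$, your bound $C2^{i\sigma}F(a/r^s)A_i\mu(B_i)$ on the right-hand side of \eqref{eqEnergy:Caccioppoli} holds.

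\textbf{The gap: the Sobolev step.} You leave this step open, and the remedies you offer do not work as written. A ball $B_\rho(x_0)$ with $\Lambda\rho\le r_i$ has radius at most $r_i/\Lambda$. In the paper $\Lambda=3(1+2\lambda)\ge 9$, so this radius is below $r_{i+1}$ and the ball says nothing about most of $B_{r_{i+1}}$. That is what pushes you to a covering. On a covering by balls $B_z$ of radius $\sim 2^{-i}r$, your mean-value bounds fail:
\begin{itemize}
\item the average of $w_i\eta_i^q$ over $B_z$ is only bounded by $2aA_i\mu(B_i)/\mu(B_z)\lesssim 2^{iQ}aA_i$, not by $2aA_i$;
\item absorbing the mean needs $A_i\mu(B_i)\ll\mu(B_z)$, roughly $A_i\ll 2^{-iQ}$, which smallness of $\gamma$ does not give uniformly in $i$.
\end{itemize}
A covering argument could be rescued, using Jensen on each small ball, bounded overlap of the $\Lambda B_z$, and an extra factor $2^{iQ(\theta-1)}$. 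None of this is carried out, however.

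\textbf{The missing observation.} The restriction $\Lambda\rho\le r_i$ is unnecessary. Extend $g_i=w_i\eta_i^q$ by zero to all of $X$. Then apply Theorem \ref{thmSobPoi:SobolevPoincare} once, on $B_i$ itself. This is admissible because $r_i\le 2r\le R\le R_0/(2\lambda)$, even though $\Lambda B_i$ may leave $B_R(x_0)$. Split the energy over $\Lambda B_i\times\Lambda B_i$ as follows.
\begin{itemize}
\item Pairs with both points outside $B_i$ contribute nothing.
\item Pairs in $B_i\times B_i$ give exactly the Caccioppoli energy.
\item Pairs with $x\in B_i$ and $y\in\Lambda B_i\setminus B_i$ leave $\int_{B_i}\int_{X\setminus B_i}F(g_i(x)/d(x,y)^s)\,\nu(x,\dy)\mu(\dx)$. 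Since $g_i(x)\neq 0$ only at distance $\gtrsim 2^{-i}r$ from $X\setminus B_i$, \eqref{eqPre:TailIntegrabilityAssumption} bounds this by $C2^{iq}\int_{B_i}F(w_i/r^s)\,\d\mu\le C2^{iq}F(a/r^s)A_i\mu(B_i)$.
\item The reverse cross term equals the previous one by Lemma \ref{lemPre:SymmetryFunctions}.
\end{itemize}
The same splitting shows $g_i\in W^{s,F,\nu}(\Lambda B_i)$. The mean term is handled by Jensen: $F^\theta\bigl((g_i)_{B_i}/r^s\bigr)\le\bigl(\fint_{B_i}F(g_i/r^s)\,\d\mu\bigr)^\theta$. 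Together these give $\fint_{B_{i+1}}F^\theta(w_i/r^s)\,\d\mu\le C\bigl(2^{i\sigma}F(a/r^s)A_i\bigr)^\theta$ directly, with no covering.

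\textbf{Cutoff indexing.} The supremum in the tail term of Lemma \ref{lemEnergy:Caccioppoli} runs over the support of $\eta_i$, not over $\{\eta_i=1\}$. So both for that term and for the separation above, you should take $\indicator_{B_{r_{i+1}}}\le\eta_i\le\indicator_{B_{\tilde r_i}}$ with outer ball $B_{r_i}$, as the paper does. With your choice $\indicator_{B_{\tilde r_i}}\le\eta_i\le\indicator_{B_{r_i}}$, there is no gap at $i=0$.
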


\begin{proof}
    If \eqref{eqEoP:CriticalMassTailResult} holds true, then there is nothing to prove. Let us suppose that
    \begin{align}\label{eqEoP:CriticalMassTailAssumption}
        \tail{u_-}{x_0}{R}\leq\frac{1}{r^s} f\left(\frac{a}{r^s}\right)
    \end{align}
    and show that \eqref{eqEoP:CriticalMassToProve} holds. Define sequences
    \begin{align*}
        r_i=r(1+2^{-i}), \quad B_i=B_{r_i}(x_0) \and \alpha_i=a(1+2^{-i})
    \end{align*}
    as well as
    \begin{align*}
        \rho_i=\frac{r_i+r_{i+1}}{2}, \quad B^i=B_{\rho_i}(x_0) \and \beta_i=\frac{\alpha_i+\alpha_{i+1}}{2}
    \end{align*}
    for $i\geq 0$. In particular, we obtain the relations
    \begin{align*}
        r<r_{i+1}<\rho_i<r_i\leq 2r \and a<\alpha_{i+1}<\beta_i<\alpha_i\leq 2a,
    \end{align*}
    which we will use frequently. Moreover, let $\eta_i\colon X\to[0,1]$ be a Lipschitz function such that
    \begin{align*}
        \indicator_{B_{i+1}}\leq\eta_i\leq\indicator_{B^i} \and \sup_{\substack{x,y\in X\\x\neq y}}\frac{|\eta_i(x)-\eta_i(y)|}{d(x,y)}\leq\frac{1}{\rho_i-r_{i+1}}\leq\frac{1}{\frac{r_i-r_{i+1}}{2}}\leq C\cdot\frac{2^i}{r}.
    \end{align*}
    We introduce the truncations
    \begin{align*}
        w_i=(u-\alpha_i)_- \and v_i=(u-\beta_i)_-.
    \end{align*}
    Let us first apply Lemma \ref{lemEnergy:Caccioppoli} to $v_i$ and $\eta_i$. If we divide both sides of \eqref{eqEnergy:Caccioppoli} by $\mu(B_i)$, we obtain
    \begin{align}
        I&\coloneqq\fint_{B_i}\int_{B_i} F\left(\frac{|v_i(x)\eta_i^q(x)-v_i(y)\eta_i^q(y)|}{d(x,y)^s}\right)\nu(x,\dy)\mu(\dx) \nonumber \\
        &\leq C\fint_{B_i}\int_{B_i} F\left(\frac{|\eta_i(x)-\eta_i(y)|}{d(x,y)^s}\max\{v_i(x),v_i(y)\}\right)\nu(x,\dy)\mu(\dx) \nonumber \\
        &\quad +C\fint_{B_i} v_i(x)\eta_i^q(x)\mu(\dx)\sup_{x\in B^i}\int_{X\setminus B_i} f\left(\frac{v_i(y)}{d(x,y)^s}\right)\frac{\nu(x,\dy)}{d(x,y)^s}\eqqcolon II+III.\label{eqEoP:CriticalMassCaccioppoli}
    \end{align}

    Let us estimate the appearing quantities further. First, we can apply Theorem \ref{thmSobPoi:SobolevPoincare} to the function $v_i\eta_i^q$ in $B_i$, i.e.
    \begin{align}
        &\fint_{B_{i+1}} F^\theta\left(\frac{v_i}{r^s}\right)\d\mu\leq C\fint_{B_i} F^\theta\left(\frac{|v_i\eta_i^q-(v_i\eta_i^q)_{B_i}|}{r_i^s}\right)\d\mu+CF^\theta\left(\frac{(v_i\eta_i^q)_{B_i}}{r^s}\right) \nonumber \\
        &\quad\leq C\left(\fint_{\Lambda B_i}\int_{\Lambda B_i} F\left(\frac{|v_i(x)\eta_i^q(x)-v_i(y)\eta_i^q(y)|}{d(x,y)^s}\right)\nu(x,\dy)\mu(\dx)\right)^\theta+C\left(\fint_{B_i}F\left(\frac{v_i\eta_i^q}{r^s}\right)\d\mu\right)^\theta \nonumber \\
        &\quad\leq C\left(I+\fint_{B_i}\int_{X\setminus B_i}F\left(\frac{v_i(x)\eta_i^q(x)}{d(x,y)^s}\right)\nu(x,\dy)\mu(\dx)\right)^\theta+C\left(\fint_{B_i}F\left(\frac{v_i}{r^s}\right)\d\mu\right)^\theta \nonumber \\
        &\quad\leq C\left(I+2^{iq}\fint_{B_i}F\left(\frac{v_i}{r^s}\right)\d\mu\right)^\theta \label{eqEoP:CriticalMassCaccioppoliTerm1}
    \end{align}
    for $\theta>1$, e.g.\ choose $\theta=\frac{Q}{Q-s/2}$. In addition to Jensen's inequality, we have used that $\eta_i\leq 1$ is supported in $B_{\rho_i}(x_0)$ and thus,
    \begin{align*}
        &\fint_{B_i}\int_{X\setminus B_i}F\left(\frac{|v_i(x)\eta_i^q(x)|}{d(x,y)^s}\right)\nu(x,\dy)\mu(\dx) \\
        &\quad\leq C\fint_{B_i}\int_{X\setminus B_{r_i-\rho_i}(x)}F\left(\frac{|v_i(x)\eta_i^q(x)|}{(r_i-\rho_i)^s}\left(\frac{r_i-\rho_i}{d(x,y)}\right)^s\right)\nu(x,\dy)\mu(\dx) \\
        &\quad\leq C(r_i-\rho_i)^{sp}\fint_{B_i}F\left(2^{is}\frac{v_i(x)}{r^s}\right)\int_{X\setminus B_{r_i-\rho_i}(x)}\frac{\nu(x,\dy)}{d(x,y)^{sp}}\mu(\dx)\leq C\cdot 2^{iq}\fint_{B_i}F\left(\frac{v_i}{r^s}\right)\d\mu
    \end{align*}
    by \eqref{eqPre:TailIntegrabilityAssumption}. For the right hand side of \eqref{eqEoP:CriticalMassCaccioppoli}, we can estimate
    \begin{align*}
        II&=\fint_{B_i}\int_{B_i} F\left(\frac{C\frac{2^i}{r}d(x,y)}{d(x,y)^s}\max\{v_i(x),v_i(y)\}\right)\nu(x,\dy)\mu(\dx) \\
        &\leq C\fint_{B_i}\int_{B_i} F\left(\frac{2^i}{r}d(x,y)^{1-s}v_i(x)\right)\nu(x,\dy)\mu(\dx) \\
        &\leq C\cdot 2^{iq}\fint_{B_i}\int_{B_i}F\left(\frac{v_i(x)}{r^s}\left(\frac{d(x,y)}{r}\right)^{1-s}\right)\nu(x,\dy)\mu(\dx).
    \end{align*}
    Note that
    \begin{align*}
        \frac{d(x,y)}{r}\leq\frac{2r_i}{r}\leq\frac{2\cdot 2r}{r}=4 \enspace\text{for}\ x,y\in B_i
    \end{align*}
    such that, by Lemma \ref{lemPre:distance_integrability_nu},
    \begin{align}
        II&\leq C\cdot 2^{iq}\fint_{B_i}F\left(\frac{v_i(x)}{r^s}\right)\int_{B_i}\left(\frac{d(x,y)}{r}\right)^{(1-s)p}\nu(x,\dy)\mu(\dx) \nonumber \\
        &\leq C\cdot 2^{iq}\frac{r_i^{(1-s)p}}{r^{(1-s)p}}\fint_{B_i}F\left(\frac{v_i}{r^s}\right)\d\mu\leq C\cdot 2^{iq}\fint_{B_i}F\left(\frac{w_i}{r^s}\right)\d\mu.\label{eqEoP:CriticalMassCaccioppoliTerm2}
    \end{align}
    In the last step, we also used that $v_i\leq w_i$. Let us now estimate $III$. For any $x\in B^i$, we have
    \begin{align*}
        &\int_{X\setminus B_i} f\left(\frac{v_i(y)}{d(x,y)^s}\right)\frac{\nu(x,\dy)}{d(x,y)^s}\leq\int_{X\setminus B_i} f\left(\frac{|u_-(y)|+\beta_i}{d(x,y)^s}\right)\frac{\nu(x,\dy)}{d(x,y)^s} \\
        &\quad\leq C\left(\int_{X\setminus B_i} f\left(\frac{|u_-(y)|}{d(x,y)^s}\right)\frac{\nu(x,\dy)}{d(x,y)^s}+\int_{X\setminus B_i}f\left(\frac{\beta_i}{d(x,y)^s}\right)\frac{\nu(x,\dy)}{d(x,y)^s}\right) \\
        &\quad\leq C\left(\int_{X\setminus B_R(x_0)}f\left(\frac{|u_-(y)|}{d(x,y)^s}\right)\frac{\nu(x,\dy)}{d(x,y)^s}+\int_{X\setminus B_{r_i-\rho_i}(x)}f\left(\frac{2a}{(r_i-\rho_i)^s}\left(\frac{r_i-\rho_i}{d(x,y)}\right)^s\right)\frac{\nu(x,\dy)}{d(x,y)^s}\right) \\
        &\quad\leq C\tail{u_-}{x_0}{R}+Cf\left(\frac{a}{(r_i-\rho_i)^s}\right)(r_i-\rho_i)^{s(p-1)}\int_{X\setminus B_{r_i-\rho_i}(x)}\frac{\nu(x,\dy)}{d(x,y)^{sp}} \\
        &\quad\leq\frac{C}{r^s}f\left(\frac{a}{r^s}\right)+\frac{C}{(r_i-\rho_i)^s}f\left(\frac{a}{(r_i-\rho_i)^s}\right)
    \end{align*}
    by \eqref{eqEoP:CriticalMassTailAssumption}. Since $r_i-\rho_i=(r_i-r_{i+1})/2=2^{-i-2}r$, we get
    \begin{align*}
        \int_{X\setminus B_i}f\left(\frac{v_i(x)}{d(x,y)^s}\right)\frac{\nu(x,\dy)}{d(x,y)^s}\leq\frac{C}{r^s}f\left(\frac{a}{r^s}\right)+\frac{C}{r^s}\cdot 2^{isq}f\left(\frac{a}{r^s}\right)\leq\frac{C}{r^s}\cdot 2^{iq}f\left(\frac{a}{r^s}\right).
    \end{align*}  
    Note that, on the set $\{v_i>0\}=\{u<\beta_i\}$, it is true that
    \begin{align*}
        w_i=(u-\alpha_i)_-=\alpha_i-u>\alpha_i-\beta_i=\frac{\alpha_i-\alpha_{i+1}}{2}=\frac{a}{2^{i+2}}    
    \end{align*}
    Hence, as $v_i\leq w_i$, we obtain
    \begin{align}
        III&\leq\frac{C}{r^s}\cdot 2^{iq}f\left(\frac{a}{r^s}\right)\fint_{B_i}v_i\d\mu\leq C\cdot 2^{iq}\fint_{B_i}f\left(\frac{2^{i+2}w_i}{r^s}\right)\frac{w_i}{r^s}\d\mu \nonumber \\
        &\leq C\cdot 2^{iq}\cdot 2^{i(q-1)}\fint_{B_i}f\left(\frac{w_i}{r^s}\right)\frac{w_i}{r^s}\d\mu\leq C\cdot 4^{iq}\fint_{B_i}F\left(\frac{w_i}{r^s}\right)\d\mu,\label{eqEoP:CriticalMassCaccioppoliTerm3}
    \end{align}
    where we also used \eqref{eqPre:GrowthAssumption}.

    We are now in a good position to combine the estimates \eqref{eqEoP:CriticalMassCaccioppoli}, \eqref{eqEoP:CriticalMassCaccioppoliTerm1}, \eqref{eqEoP:CriticalMassCaccioppoliTerm2} and \eqref{eqEoP:CriticalMassCaccioppoliTerm3}, which will yield
    \begin{align}
        \fint_{B_{i+1}} F^\theta\left(\frac{v_i}{r^s}\right)\d\mu&\leq C\left(I+2^{iq}\fint_{B_i}F\left(\frac{v_i}{r^s}\right)\d\mu\right)^\theta \nonumber \\
        &\leq C\left(II+III+2^{iq}\fint_{B_i}F\left(\frac{w_i}{r^s}\right)\d\mu\right)^\theta\leq C\cdot 4^{iq\theta}\left(\fint_{B_i}F\left(\frac{w_i}{r^s}\right)\d\mu\right)^\theta. \label{eqEoP:CriticalMassCombinationOfEstimates}
    \end{align}
    The integral on the right hand side can be estimated by
    \begin{align*}
        \fint_{B_i}F\left(\frac{w_i}{r^s}\right)\d\mu=\frac{1}{\mu(B_i)}\int_{B_i\cap\{u<\alpha_i\}}F\left(\frac{w_i}{r^s}\right)\d\mu\leq CF\left(\frac{a}{r^s}\right)\frac{\mu(B_i\cap\{u\leq\alpha_i\})}{\mu(B_i)},
    \end{align*}
    since $u\geq 0$ in $B_i$ and therefore, $w_i\leq \alpha_i\leq 2a$. For the left hand side, we estimate
    \begin{align*}
        \fint_{B_{i+1}}F^\theta\left(\frac{v_i}{r^s}\right)\d\mu&\geq\frac{1}{\mu(B_{i+1})}\int_{B_{i+1}\cap\{u\leq\alpha_{i+1}\}}F^\theta\left(\frac{\beta_i-u}{r^s}\right)\d\mu \\
        &\geq\frac{1}{\mu(B_{i+1})}\int_{B_{i+1}\cap\{u\leq\alpha_{i+1}\}}F^\theta\left(\frac{\beta_i-\alpha_{i+1}}{r^s}\right)\d\mu \\
        &\geq\frac{c}{2^{iq\theta}}F^\theta\left(\frac{a}{r^s}\right)\frac{\mu(B_{i+1}\cap\{u\leq\alpha_{i+1}\})}{\mu(B_{i+1})}
    \end{align*}
    by using that
    \begin{align*}
        F^\theta\left(\frac{\beta_i-\alpha_{i+1}}{r^s}\right)=F^\theta\left(\frac{\alpha_i-\alpha_{i+1}}{2r^s}\right)=F^\theta\left(\frac{a}{2^{i+2}r^s}\right)\geq\frac{c}{2^{iq\theta}}F^\theta\left(\frac{a}{r^s}\right)
    \end{align*}
    Let us substitute those estimates back into \eqref{eqEoP:CriticalMassCombinationOfEstimates}, which gives
    \begin{align*}
        \frac{c}{2^{iq\theta}}F^\theta\left(\frac{a}{r^s}\right)\frac{\mu(B_{i+1}\cap\{u\leq\alpha_{i+1}\})}{\mu(B_{i+1})}\leq C\cdot 4^{iq\theta}\left(F\left(\frac{a}{r^s}\right)\frac{\mu(B_i\cap\{u\leq\alpha_i\})}{\mu(B_i)}\right)^\theta
    \end{align*}
    or
    \begin{align*}
        A_{i+1}\leq\overline{C}\cdot 8^{iq\theta}A_i^\theta
    \end{align*}
    for some $\overline{C}=\overline{C}(\C,s,t,p,q,\lambda,\kappa,C_1,C_2)>0$ after defining
    \begin{align*}
        A_i=\frac{\mu(B_i\cap\{u\leq\alpha_i\})}{\mu(B_i)}.
    \end{align*}
    Since $\theta>1$, we deduce from Lemma \ref{lemEoP:DeGiorgiFastConvergence} that $A_i\to 0$, provided
    \begin{align*}
        A_0\leq \overline{C}^\frac{-1}{\theta-1}\cdot 8^{-\frac{q\theta}{(\theta-1)^2}}.
    \end{align*}
    Since
    \begin{align*}
        A_0=\frac{\mu(B_0\cap\{u\leq\alpha_0\})}{\mu(B_0)}=\frac{\mu(B_{2r}(x_0)\cap\{u\leq 2a\})}{\mu(B_{2r}(x_0))}\leq\gamma
    \end{align*}
    due to \eqref{eqEoP:CriticalMassAssumption}, it suffices to choose
    \begin{align*}
        \gamma=\overline{C}^\frac{-1}{\theta-1}\cdot 8^{-\frac{q\theta}{(\theta-1)^2}}=\gamma(\C,s,t,p,q,\kappa,\lambda,C_1,C_2)>0.
    \end{align*}
    Finally, by continuity of measure, we get that
    \begin{align*}
        \frac{\mu(B_r(x_0)\cap\{u\leq a\})}{\mu(B_r(x_0))}=0.
    \end{align*}
    In other words, $u\geq a$ holds true $\mu$-almost everywhere on $B_r(x_0)$.    
\end{proof}

\begin{lemma}[Expansion of positivity] \label{lemEoP:ExpansionOfPositivity}
    Fix $x_0\in X$, $0<R\leq\frac{R_0}{2\lambda}$ and $a>0$. Let $u\in W^{s,F,\nu}(B_R(x_0))\cap L_{s,\nu}^f(X)$ be a weak supersolution to $\L u=0$ in $B_R(x_0)$ such that $u\geq 0$ in $B_R(x_0)$. Let $0<r\leq\frac{R}{8\lambda}$ and assume that
    \begin{align}\label{eqEoP:ExpansionOfPositivityAssumption}
        \mu(\{u\geq a\}\cap B_{4r}(x_0))\geq\varepsilon\mu(B_{4r}(x_0))
    \end{align}
    for some $\varepsilon\in(0,1)$. Then, there exists some $\delta=\delta(\C,s,t,p,q,\kappa,\lambda,C_1,C_2,\varepsilon)>0$ such that
    \begin{align*}
        \tail{u_-}{x_0}{R}>\frac{1}{r^s}f\left(\frac{\delta a}{r^s}\right)
    \end{align*}
    or
    \begin{align} \label{eqEoP:ExpansionOfPositivityToProve}
        \inf_{B_r(x_0)}u\geq\delta a.
    \end{align}
\end{lemma}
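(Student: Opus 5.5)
The plan is to combine the logarithmic estimate (Lemma \ref{lemEnergy:LogEstimate}), the Poincar\'{e} assumption \eqref{eqPre:PoincareAssumption}, and the critical mass lemma (Lemma \ref{lemEoP:CriticalMass}). Fix a small parameter $\sigma\in(0,1/4]$ to be chosen depending on $\varepsilon$ and the data, and set $\delta=\sigma/2$. Suppose the tail alternative fails, i.e.
\begin{align*}
\tail{u_-}{x_0}{R}\leq\frac{1}{r^s}f\left(\frac{\delta a}{r^s}\right).
\end{align*}
We must show $\inf_{B_r(x_0)}u\geq\delta a$. By Lemma \ref{lemEoP:CriticalMass} with $a$ replaced by $\delta a$ (the tail hypothesis there is exactly the one we assume), it suffices to prove
\begin{align*}
\mu(\{u\leq 2\delta a\}\cap B_{2r}(x_0))\leq\gamma\mu(B_{2r}(x_0)).
\end{align*}
The radii fit: $r\leq R/2$ holds and $R\leq R_0/(2\lambda)$.

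To get this measure bound, consider the truncated logarithm
\begin{align*}
\psi=\min\left\{\log\frac{a+\sigma a}{u+\sigma a},\,\log\frac{1}{2\sigma}\right\}_+ .
\end{align*}
This is a $1$-Lipschitz truncation of $\log\frac{a+\sigma a}{u+\sigma a}$, so $|\psi(x)-\psi(y)|\leq|\log\frac{u(x)+\sigma a}{u(y)+\sigma a}|$. Moreover $\psi=0$ on $\{u\geq a\}$, and $\psi\geq\log\frac{1+\sigma}{2\sigma}\gtrsim\log\frac1\sigma$ on $\{u\leq\sigma a\}=\{u\leq 2\delta a\}$.

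Apply \eqref{eqPre:PoincareAssumption} on $B_{2r}(x_0)$ with the dilated ball $B_{2\lambda r}(x_0)$. Then apply Lemma \ref{lemEnergy:LogEstimate} on $B_{2\lambda r}(x_0)$ with parameter $\sigma a$; this is admissible since $2\lambda r\leq R/4$ by $r\leq R/(8\lambda)$. This yields
\begin{align*}
\fint_{B_{2r}}|\psi-(\psi)_{B_{2r}}|^p\,\d\mu\leq C\left(1+\frac{(2\lambda r)^s}{f(\sigma a/(2\lambda r)^s)}\tail{u_-}{x_0}{R}\right),
\end{align*}
where the doubling property is used to compare $\mu(B_{2\lambda r})$ with $\mu(B_{2r})$ and $r^{tp}$ factors cancel. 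The assumed tail bound with $\delta\leq\sigma$, together with the scaling properties of $f$, makes the bracket bounded by a constant. Thus the left-hand side is bounded by a constant $C$ that does not depend on $\sigma$.

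It remains to turn this oscillation bound into a measure bound. By \eqref{eqEoP:ExpansionOfPositivityAssumption} and doubling, $\psi=0$ on a set of measure at least $c\varepsilon\mu(B_{2r})$ inside $B_{4r}$. There is a ball mismatch here: the positivity set lies in $B_{4r}$ while Poincar\'{e} is applied on $B_{2r}$. I would resolve it by running Poincar\'{e} on $B_{4r}$ with dilation $B_{4\lambda r}$; this is still admissible for the log estimate provided $4\lambda r\leq R/4$, which may require shrinking constants, or one can pass to $B_{4r}$ using doubling. On the positivity set, $|\psi-(\psi)|=(\psi)$, so
\begin{align*}
\varepsilon(\psi)^p\lesssim C, \qquad\text{hence}\qquad (\psi)\lesssim\varepsilon^{-1/p}.
\end{align*}
On $\{u\leq\sigma a\}\cap B_{2r}$ we have $\psi-(\psi)\geq\tfrac12\log\frac1\sigma$ once $\sigma$ is small relative to $\varepsilon$. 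Chebyshev's inequality then gives
\begin{align*}
\frac{\mu(\{u\leq\sigma a\}\cap B_{2r})}{\mu(B_{2r})}\leq\frac{C}{\log^p(1/\sigma)}.
\end{align*}
Choosing $\sigma=\sigma(\varepsilon,\text{data})$ so that this is at most $\gamma$ finishes the proof.

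The main obstacle is keeping the constant in the log-estimate bound independent of $\sigma$. This requires checking that the tail term with $f(\sigma a/\cdot)$ in the denominator is controlled by the assumed tail bound, which carries $f(\delta a/\cdot)$. Because $\delta\leq\sigma$ and $f$ is increasing, we have $f(\delta a/r^s)\leq f(\sigma a/r^s)$, so the ratio is at most $C$ after the radius rescaling $(2\lambda r)^s$ versus $r^s$ via the growth bounds on $f$. The remaining bookkeeping is the ball-radius compatibility ($4r$ versus $2r$, $\lambda$-dilation, $R/4$).
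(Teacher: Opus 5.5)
Your core argument is the paper's: the log estimate (Lemma \ref{lemEnergy:LogEstimate}) on $B_{2\lambda r}(x_0)$, Poincar\'{e} \eqref{eqPre:PoincareAssumption} on $B_{2r}(x_0)$, the tail term absorbed via $\delta\le\sigma$ and the scaling of $f$, and then Lemma \ref{lemEoP:CriticalMass} at level $\delta a$. The log estimate is admissible on $B_{2\lambda r}(x_0)$ because $2\lambda r\le R/4$. The differences from the paper are cosmetic. You truncate the logarithm and use an $L^p$-Chebyshev step, whereas the paper uses the untruncated $v=\bigl(\log\frac{(1+\delta)a}{u+\delta a}\bigr)_+$ and an $L^1$ averaging over $\{v=0\}$ and $\{u\le 2\delta a\}$. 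There is also a harmless slip: with your truncation, $\psi=\log\frac{1}{2\sigma}$ exactly on $\{u\le\sigma a\}$, not $\psi\ge\log\frac{1+\sigma}{2\sigma}$.

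Where your proposal does not close is the $B_{4r}$ versus $B_{2r}$ mismatch you flagged. Running \eqref{eqPre:PoincareAssumption} on $B_{4r}(x_0)$ forces the log estimate on $B_{4\lambda r}(x_0)$, which requires $4\lambda r\le R/4$. The hypothesis $r\le R/(8\lambda)$ only gives $4\lambda r\le R/2$. You are not free to ``shrink constants'' here: only $\delta$ may depend on the data, while the admissible range of $r$ is fixed by the statement. Doubling does correctly transfer smallness of $\{u\le\sigma a\}$ from $B_{4r}$ down to $B_{2r}$. However, it does not remove this radius constraint, and it cannot move the positivity set from $B_{4r}$ into $B_{2r}$ either.

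The paper's proof sidesteps the issue by using the density hypothesis on $B_{2r}(x_0)$ directly. That is also the form in which the lemma is applied in Lemma \ref{lemHoelder:OscillationDiminish}, with the set $2B_{j+1}$, so the $B_{4r}$ in the statement should be read as $B_{2r}$. With that reading, your argument run on $B_{2r}$ is complete. If you insist on the literal $B_{4r}$ hypothesis, you must restrict to $r\le R/(16\lambda)$. That restriction would be harmless for the application in Section \ref{sec:Hoelder}, but it is a change of the statement.
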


\begin{proof}
    Let us assume
    \begin{align} \label{eqEoP:ExpansionOfPositivityTailAssumption}
        \tail{u_-}{x_0}{R}\leq\frac{1}{r^s} f\left(\frac{\delta a}{r^s}\right),
    \end{align}
    so it suffices to prove \eqref{eqEoP:ExpansionOfPositivityToProve}. To this end, let $\delta>0$ be arbitrary and set
    \begin{align*}
        v(x)=\left(\log\frac{(1+\delta)a}{u(x)+\delta a}\right)_+, \quad x\in B_R(x_0).
    \end{align*}
    It is easy to check that $v\in W^{s,F,\nu}(B_R(x_0))\cap L_{s,\nu}^f(X)$ and moreover,
    \begin{align*}
        |v(x)-v(y)|\leq\left|\log\frac{u(x)+\delta a}{u(y)+\delta a}\right|
    \end{align*}
    for all $x,y\in X$. Hence, we may apply Lemma \ref{lemEnergy:LogEstimate} to obtain
    \begin{align*}
        \fint_{B_{2\lambda r}(x_0)}\int_{B_{2\lambda r}(x_0)}\frac{|v(x)-v(y)|^p}{d(x,y)^{tp}}\nu(x,\dy)\mu(\dx)\leq\frac{C}{r^{tp}}\left(1+\frac{r^s}{f\left(\frac{\delta a}{r^s}\right)}\tail{u_-}{x_0}{R}\right).
    \end{align*}
    We will combine this estimate with \eqref{eqPre:PoincareAssumption} and \eqref{eqEoP:ExpansionOfPositivityTailAssumption}, which implies
    \begin{align*}
        \fint_{B_{2r}(x_0)}|v-(v)_{B_{2r}(x_0)}|\d\mu\leq C.
    \end{align*}
    By recalling the definition of $v$, we may write
    \begin{align*}
        \frac{\mu(\{u\geq a\}\cap B_{2r}(x_0))}{\mu(B_{2r}(x_0))}(v)_{B_{2r}(x_0)}&=\frac{\mu(\{v=0\}\cap B_{2r}(x_0))}{\mu(B_{2r}(x_0))}|(v)_{B_{2r}(x_0)}| \\
        &=\frac{1}{\mu(B_{2r}(x_0))}\int_{\{v=0\}\cap B_{2r}(x_0)}|v(x)-(v)_{B_{2r}(x_0)}|\mu(\dx) \\
        &\leq\fint_{B_{2r}(x_0)}|v(x)-(v)_{B_{2r}(x_0)}|\mu(\dx)\leq C.
    \end{align*}
    By \eqref{eqEoP:ExpansionOfPositivityAssumption}, we get
    \begin{align*}
        \fint_{B_{2r}(x_0)}v(x)\mu(\dx)\leq\frac{\overline{C}}{\varepsilon}
    \end{align*}
    for some $\overline{C}=\overline{C}(\C,p,q,\lambda,C_1,C_2)>0$. On the other hand, we have
    \begin{align*}
        \fint_{B_{2r}(x_0)}v(x)\mu(\dx)&\geq\frac{1}{\mu(B_{2r}(x_0))}\int_{\{u\leq 2\delta a\}\cap B_{2r}(x_0)}\log\frac{(1+\delta)a}{u(x)+\delta a}\mu(\dx) \\
        &\geq\frac{\mu(\{u\leq 2\delta a\}\cap B_{2r}(x_0))}{\mu(B_{2r}(x_0))}\log\frac{1+\delta}{3\delta}.
    \end{align*}
    Let $\gamma=\gamma(\C,s,t,p,q,\kappa,\lambda,C_1,C_2)>0$ be the parameter from Lemma \ref{lemEoP:CriticalMass} and choose
    \begin{align*}
        \delta=\frac{1}{3e^\frac{\overline{C}\gamma}{\varepsilon}-1}=\delta(\C,s,t,p,q,\kappa,\lambda,C_1,C_2,\varepsilon)>0.
    \end{align*}
    Then
    \begin{align*}
        \mu(\{u\leq 2\delta a\}\cap B_{2r}(x_0))\leq\frac{\overline{C}}{\varepsilon}\mu(B_{2r}(x_0))\log\frac{1+\delta}{3\delta}\leq\gamma\mu(B_{2r}(x_0))
    \end{align*}
    and Lemma \ref{lemEoP:CriticalMass} together with \eqref{eqEoP:ExpansionOfPositivityTailAssumption} implies that
    \begin{align*}
        \inf_{B_r(x_0)}u\geq\delta a
    \end{align*}
    as desired.
\end{proof}

\section{Hölder continuity}\label{sec:Hoelder}

\begin{lemma} \label{lemHoelder:OscillationDiminish}
    Fix $x_0\in X$ and $0<R\leq\frac{R_0}{2\lambda}$. Let $u\in W^{s,F,\nu}(B_R(x_0))\cap L_{s,\nu}^f(X)\cap L^\infty(B_R(x_0))$ be a bounded weak solution to $\L u=0$ in $B_R(x_0)$. Then, for any $r\in(0,R]$, it is true that
    \begin{align}\label{eqHoelder:OscillationDiminish}
        \osc_{B_r(x_0)}u\leq C\left(\frac{r}{R}\right)^\alpha\left(\|u\|_{L^\infty(B_R(x_0))}+\Tail{u}{x_0}{R}\right)
    \end{align}
    for some $\alpha=\alpha(\C,s,t,p,q,\kappa,\lambda,C_1,C_2)\in(0,1)$ and $C=C(\C,s,t,p,q,\kappa,\lambda,C_1,C_2)>0$.
\end{lemma}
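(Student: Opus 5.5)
We argue by the standard De Giorgi oscillation-decay scheme built on Lemma \ref{lemEoP:ExpansionOfPositivity}. Fix a small ratio $\sigma=\frac{1}{8\lambda}\cdot\frac14$, say, and set $R_j=\sigma^jR/4$, $B^j=B_{R_j}(x_0)$. The goal is to construct nonincreasing $M_j$ and nondecreasing $m_j$ with
\begin{align*}
    m_j\leq u\leq M_j\enspace\text{in}\ B^j, \qquad M_j-m_j=\omega_j\coloneqq\sigma^{\alpha j}\omega_0,
\end{align*}
where $\omega_0=C_\ast(\|u\|_{L^\infty(B_R(x_0))}+\Tail{u}{x_0}{R})$ for a large constant $C_\ast$ fixed at the end. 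For $j=0$ choose $M_0=\omega_0/2$, $m_0=-\omega_0/2$. Once the sequence is built, \eqref{eqHoelder:OscillationDiminish} follows for general $r\in(0,R]$ by picking $j$ with $R_{j+1}<r\leq R_j$ (and trivially for $r>R/4$).

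For the inductive step, assume the construction up to $j$. Consider the ball $B_{4R_{j+1}}(x_0)\subseteq B^j$ (by the choice of $\sigma$, $R_{j+1}\leq R_j/(8\lambda)$). Either $\mu(\{u-m_j\geq\omega_j/2\}\cap B_{4R_{j+1}})\geq\frac12\mu(B_{4R_{j+1}})$ or the same holds for $\{M_j-u\geq\omega_j/2\}$. In the first case apply Lemma \ref{lemEoP:ExpansionOfPositivity} with $\varepsilon=\frac12$, $a=\omega_j/2$, the supersolution $u-m_j$ (nonnegative in $B^j$), outer ball $B^j$ and inner radius $R_{j+1}$. This yields either $u-m_j\geq\delta\omega_j/2$ on $B^{j+1}$, and we set $m_{j+1}=m_j+\delta\omega_j/2$, $M_{j+1}=m_{j+1}+\omega_{j+1}$, which is valid as long as $\sigma^\alpha\geq1-\delta/2$; or the tail alternative holds. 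The second case is symmetric, using $M_j-u$.

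The main obstacle is excluding the tail alternative, i.e.\ showing
\begin{align*}
    \tail{(u-m_j)_-}{x_0}{R_j}\leq R_{j+1}^{-s}f\bigl(\delta\omega_j/(2R_{j+1}^s)\bigr).
\end{align*}
To do this, split $X\setminus B^j$ into the annuli $B^{k}\setminus B^{k+1}$ for $k<j$, on which $(u-m_j)_-\leq m_j-m_k\leq\omega_k-\omega_j\leq\omega_k$, together with the region $X\setminus B^0$, where $(u-m_j)_-\leq|u|+\omega_0$. For $x\in B_{R_j/2}(x_0)$ and $y\notin B^{k+1}$ we have $d(x,y)\geq R_{k+1}/2$. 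Using \eqref{eqPre:TailIntegrabilityAssumption} and the $(p,q)$-scaling of $f$, the annulus contributions are bounded by
\begin{align*}
    \sum_{k<j}f\bigl(\omega_k/R_{k+1}^s\bigr)R_{k+1}^{-s}\lesssim\sum_{k<j}\sigma^{(\alpha-s)(k-j)(p-1)}\cdots,
\end{align*}
which compares with $R_j^{-s}f(\omega_j/R_j^s)$ times a geometric sum of the form $\sum_{\ell\geq1}\sigma^{\ell(s p-\alpha(q-1))}$ (after estimating each ratio through the lower/upper indices). This sum is small, of order $\sigma^{sp-\alpha(q-1)}$, once $\alpha<sp/(2(q-1))$. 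The outer part is bounded by $\tail{u}{x_0}{R}$ plus $R^{-s}f(\omega_0/R^s)$, after enlarging the supremum region via a covering/finiteness argument as in Lemma \ref{lemPre:TailFiniteness} or using $B_{R_j/2}\subseteq B_{R/2}$. Since $\tail{u}{x_0}{R}=R^{-s}f(\Tail{u}{x_0}{R}/R^s)\leq R^{-s}f(\omega_0/(C_\ast R^s))$, this piece is at most $C_\ast^{-(p-1)}\sigma^{-j(\dots)}$ times the target, again controlled because $\omega_j/R_j^s$ grows in $j$ when $\alpha<s$.

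Finally choose constants in order. First $\sigma$ is fixed as above. Then $\alpha$ is taken small so that $\sigma^\alpha\geq1-\delta/2$, $\alpha<s$, and the geometric tail sum is at most a small multiple of $\delta^{q-1}$. If necessary, $\sigma$ is also shrunk beforehand, since $\delta$ depends only on $\varepsilon=\frac12$ and the data. Last, $C_\ast$ is chosen large. The base case $|u|\leq\omega_0/2$ in $B^0$ holds because $C_\ast\geq2$.
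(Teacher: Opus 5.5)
Your proposal is correct and follows essentially the same route as the paper. It uses geometric radii with ratio $\sigma$ and the oscillation bound $\sigma^{\alpha j}\omega_0$. The half-measure dichotomy feeds Lemma~\ref{lemEoP:ExpansionOfPositivity} with $\varepsilon=\frac12$. The tail alternative is excluded by the annular decomposition of $X\setminus B^j$ plus a geometric sum. The constants are fixed in the same order: $\sigma$ small depending on $\delta$, then $\alpha$ with $\sigma^\alpha\geq 1-\frac{\delta}{2}$.

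A few points to tidy up:
\begin{enumerate}[label=(\roman*)]
    \item The smallness of the tail sum comes from shrinking $\sigma$ relative to $\delta$, not from making $\alpha$ small. As $\alpha\to0$ the sum tends to $\sigma^{sp}/(1-\sigma^{sp})$, so that step is mandatory rather than ``if necessary''.
    \item Your update $m_{j+1}=m_j+\delta\omega_j/2$ gives $M_{j+1}\geq M_j$, so $M_j$ is not nonincreasing as claimed. This is harmless, because $(u-m_j)_-\leq\omega_k$ on $B^k$ still holds; alternatively use $M_{j+1}=M_j$, $m_{j+1}=M_j-\omega_{j+1}$.
    \item The constant $C_\ast$ is not needed.
    \item In the second case, $M_j-u$ is a supersolution only for $\tilde h(x,y,t)=-h(x,y,-t)$, since $h$ is not assumed odd. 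The paper states this explicitly.
\end{enumerate}
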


\begin{proof}
    Let us fix some parameters
    \begin{align*}
        0<\alpha\leq\min\left\{\frac{sp}{2(p-1)},s\right\} \and 0<\sigma\leq\frac{1}{8\lambda}
    \end{align*}
    that will be chosen during the proof. For $j\geq 0$, define
    \begin{align*}
        r_j=\sigma^jR, \quad  B_j=B_{r_j}(x_0) \and \omega_j=\sigma^{\alpha j}\left(\|u\|_{L^\infty(B_R(x_0))}+\Tail{u}{x_0}{R}\right).
    \end{align*}
    We will prove by induction that
    \begin{align}\label{eqHoelder:ProofMainInductionGoal}
        \osc_{B_j}u\leq 2\omega_j
    \end{align}
    for any $j\geq 0$. Once we have proved \eqref{eqHoelder:ProofMainInductionGoal}, we can conclude the proof. Indeed, given $r\in(0,R]$, choose the integer $N\geq 0$ with
    \begin{align*}
        r_{N+1}<r\leq r_N
    \end{align*}
    and see that, by \eqref{eqHoelder:ProofMainInductionGoal},
    \begin{align*}
        \osc_{B_r(x_0)}u&\leq\osc_{B_N}u\leq 2\omega_N=2\sigma^{\alpha N}\left(\|u\|_{L^\infty(B_R(x_0))}+\Tail{u}{x_0}{R}\right) \\
        &=2\sigma^{\alpha(N+1)}\sigma^{-\alpha}\left(\|u\|_{L^\infty(B_R(x_0))}+\Tail{u}{x_0}{R}\right) \\
        &\leq 2\left(\frac{r}{R}\right)^\alpha\sigma^{-\alpha}\left(\|u\|_{L^\infty(B_R(x_0))}+\Tail{u}{x_0}{R}\right),
    \end{align*}
    which proves \eqref{eqHoelder:OscillationDiminish} for $C=2\sigma^{-\alpha}$.
    
    Now, let us prove \eqref{eqHoelder:ProofMainInductionGoal}. The case $j=0$ is trivial as
    \begin{align*}
        \osc_{B_0}u\leq 2\|u\|_{L^\infty(B_R(x_0))}\leq 2\omega_0.
    \end{align*}
    Assume that, for some $j\geq 0$,
    \begin{align}\label{eqHoelder:ProofMainInductionHypothesis}
        \osc_{B_i}u\leq 2\omega_i \quad\forall i\in\{0,\ldots, j\}.
    \end{align}
    We need to prove that $\osc_{B_{j+1}}u\leq 2\omega_{j+1}$. Note that at least one of the inequalities
    \begin{align}\label{eqHoelder:ProofMainPoss1}
        \mu(\{u-\inf_{B_j}u\geq\omega_j\}\cap 2B_{j+1})&\geq\frac{1}{2}\mu(2B_{j+1})
    \end{align}
    and
    \begin{align}\label{eqHoelder:ProofMainPoss2}
        \mu(\{u-\inf_{B_j}u\leq\omega_j\}\cap 2B_{j+1})&\geq\frac{1}{2}\mu(2B_{j+1})
    \end{align}
    must hold. We define
    \begin{align*}
        v=\begin{cases}
            u-\inf_{B_j}u, &\ \text{if \eqref{eqHoelder:ProofMainPoss1} holds true,} \\
            2\omega_j-\left(u-\inf_{B_j}u\right), &\ \text{if \eqref{eqHoelder:ProofMainPoss2} holds true.}
        \end{cases}
    \end{align*} 
    Then, in any case, we have that $v$ is a weak supersolution of $\L v=0$ in $B_j$, which satisfies $v\geq 0$ in $B_j$, $\sup_{B_i}v\leq 4\omega_i$ for any $i\in\{0,\ldots,j\}$, and
    \begin{align*}
        \mu(\{v\geq\omega_j\}\cap 2B_{j+1})&\geq\frac{1}{2}\mu(2B_{j+1}).
    \end{align*}
    In fact, since $h(x,y,\cdot)$ is not odd, $v$ is not necessarily a weak supersolution of $\L v=0$ if \eqref{eqHoelder:ProofMainPoss2} is in force. However, if we replace $h(x,y,t)$ by $\tilde{h}(x,y,t)=-h(x,y,-t)$, then $v$ does satisfy $\L v=0$ weakly. Since $\tilde{h}$ satisfies the exact same structural assumptions as $h$, all of the following estimates remain true. Therefore, it follows from Lemma \ref{lemEoP:ExpansionOfPositivity} that
    \begin{align}\label{eqHoelder:ProofMainFirstAlternative}
        r_{j+1}^s\tail{v_-}{x_0}{r_j}>f\left(\frac{\delta\omega_j}{r_{j+1}^s}\right)
    \end{align}
    or
    \begin{align}\label{eqHoelder:ProofMainSecondAlternative}
        \inf_{B_{j+1}}v\geq\delta\omega_j
    \end{align}
    must be true. Now, we want to choose $\sigma$ appropriately small such that \eqref{eqHoelder:ProofMainFirstAlternative} is impossible. Let us compute the tail of $v_-$ using the induction hypothesis \eqref{eqHoelder:ProofMainInductionHypothesis}. We can split the integration domain into annuli, i.e.
    \begin{align*}
        \tail{v_-}{x_0}{r_j}\leq\sup_{x\in B_j/2}\left(\sum_{i=0}^{j-1}\int_{B_i\setminus B_{i+1}}f\left(\frac{|v(y)|}{d(x,y)^s}\right)\frac{\nu(x,\dy)}{d(x,y)^s}+\int_{X\setminus B_0}f\left(\frac{|v(y)|}{d(x,y)^s}\right)\frac{\nu(x,\dy)}{d(x,y)^s}\right)
    \end{align*}
    and estimate each term separately. Fix $x\in B_j/2$. For $i\in\{0,\ldots,j-1\}$, we have
    \begin{align*}
        \int_{B_i\setminus B_{i+1}}f\left(\frac{|v(y)|}{d(x,y)^s}\right)\frac{\nu(x,\dy)}{d(x,y)^s}&\leq\int_{X\setminus B_{r_{i+1}/2}(x)}f\left(\frac{4\omega_i}{r_{i+1}^s}\left(\frac{r_{i+1}}{d(x,y)}\right)^s\right)\frac{\nu(x,\dy)}{d(x,y)^s} \\
        &\leq Cr_{i+1}^{s(p-1)}f\left(\frac{\omega_i}{r_{i+1}^s}\right)\int_{X\setminus B_{r_{i+1}/2}(x)}\frac{\nu(x,\dy)}{d(x,y)^{sp}}\leq\frac{C}{r_{i+1}^s}f\left(\frac{\omega_i}{r_{i+1}^s}\right).    
    \end{align*}
    Here we have used that for $y\in X\setminus B_{i+1}$, we have
    \begin{align*}
        d(x,y)\geq d(x_0,y)-d(x_0,x)\geq r_{i+1}-\frac{r_j}{2}\geq r_{i+1}-\frac{r_{i+1}}{2}=\frac{r_{i+1}}{2}.
    \end{align*}
    Similarly, since
    \begin{align*}
        |v|\leq|u|+\sup_{B_j}|u|+2\omega_j\leq |u|+3\omega_0 \and \Tail{u}{x_0}{R}\leq\omega_0,
    \end{align*}
    Lemma \ref{lemPre:SupEstimate} implies
    \begin{align*}
        \int_{X\setminus B_0}f\left(\frac{|v(y)|}{d(x,y)^s}\right)\frac{\nu(x,\dy)}{d(x,y)^s}&\leq C\int_{X\setminus B_R(x_0)}f\left(\frac{|u(y)|}{d(x,y)^s}\right)\frac{\nu(x,\dy)}{d(x,y)^s} \\
        &\quad +C\int_{X\setminus B_{R/2}(x)}f\left(\frac{\omega_0}{R^s}\left(\frac{R}{d(x,y)}\right)^s\right)\frac{\nu(x,\dy)}{d(x,y)^s} \\
        &\leq C\tail{u}{x_0}{R}+CR^{s(p-1)}f\left(\frac{\omega_0}{R^s}\right)\int_{X\setminus B_{R/2}(x)}\frac{\nu(x,\dy)}{d(x,y)^{sp}} \\
        &\leq CR^{-s}f\left(R^{-s}\Tail{u}{x_0}{R}\right)+\frac{C}{R^s}f\left(\frac{\omega_0}{R^s}\right)\leq\frac{C}{R^s}f\left(\frac{\omega_0}{R^s}\right).
    \end{align*}
    Since $R\geq r_1$ and $\alpha\leq s$, it follows that
    \begin{align*}
        r_{j+1}^s\tail{v_-}{x_0}{r_j}&\leq C\sum_{i=0}^{j-1}\frac{r_{j+1}^s}{r_{i+1}^s}f\left(\frac{\omega_i}{r_{i+1}^s}\right)=C\sum_{i=0}^{j-1}\sigma^{(j-i)s}f\left(\sigma^{(j-i)(s-\alpha)}\frac{\omega_j}{r_{j+1}^s}\right) \\
        &\leq C\sum_{i=0}^{j-1}\sigma^{(j-i)(sp-\alpha(p-1))}f\left(\frac{\omega_j}{r_{j+1}^s}\right)\leq\overline{C}\frac{\sigma^{sp-\alpha(p-1)}}{1-\sigma^{sp-\alpha(p-1)}}\delta^{1-q}f\left(\frac{\delta\omega_j}{r_{j+1}^s}\right),
    \end{align*} 
    where $\delta=\delta(\C,s,t,p,q,\kappa,\lambda,C_1,C_2)\in(0,1)$ is the constant from Lemma \ref{lemEoP:ExpansionOfPositivity} for $\varepsilon=1/2$ and $\overline{C}=\overline{C}(s,p,q,C_2)>0$. Since $sp-\alpha(p-1)\geq sp/2$ and $\sigma\leq 1/8$, we have
    \begin{align*}
        \frac{\sigma^{sp-\alpha(p-1)}}{1-\sigma^{sp-\alpha(p-1)}}\leq\frac{\sigma^{sp-\alpha(p-1)}}{1-8^{\alpha(p-1)-sp}}\leq\frac{\sigma^{sp/2}}{1-8^{-sp}}.
    \end{align*}
    In order to ensure that \eqref{eqHoelder:ProofMainFirstAlternative} is false, it suffices to have
    \begin{align*}
        \overline{C}\frac{\sigma^{sp/2}}{1-8^{-sp}}\leq\delta^{q-1},
    \end{align*}
    i.e.\ we choose
    \begin{align*}
        \sigma=\min\left\{\left(\frac{\delta^{q-1}}{\overline{C}}(1-8^{-sp})\right)^\frac{2}{sp},\frac{1}{8\lambda}\right\}=\sigma(\C,s,t,p,q,\kappa,\lambda,C_1,C_2)\in\left(0,\frac{1}{8\lambda}\right].
    \end{align*}
    Lemma \ref{lemEoP:ExpansionOfPositivity} now yields that \eqref{eqHoelder:ProofMainSecondAlternative} is indeed true. Recalling the definition of $v$, this implies
    \begin{align*}
        \osc_{B_{j+1}}u\leq 2\left(1-\frac{\delta}{2}\right)\sigma^{-\alpha}\omega_{j+1}
    \end{align*}
    in any case. Finally, it remains to choose
    \begin{align*}
        \alpha=\min\left\{\frac{\log\left(1-\frac{\delta}{2}\right)}{\log\sigma},\frac{sp}{2(p-1)},s\right\}=\alpha(\C,s,t,p,q,\kappa,\lambda,C_1,C_2)\in(0,1)
    \end{align*}
    because then
    \begin{align*}
        \osc_{B_{j+1}}u\leq 2\left(1-\frac{\delta}{2}\right)\sigma^{-\alpha}\omega_{j+1}\leq 2\omega_{j+1}
    \end{align*}
    and the induction is complete.
\end{proof}

For convenience, we derive Theorem \ref{thmPre:Hoelder} explicitly from Lemma \ref{lemHoelder:OscillationDiminish} because on an abstract metric measure space it might be unclear how one extracts the continuous version of the solution and handles the tail term appropriately.

\begin{proof}[Proof of Theorem \ref{thmPre:Hoelder}]
    For any $x\in B_{R/4}(x_0)$ and $r>0$, we can define
    \begin{align*}
        M(x,r)=\sup_{B_r(x)}u \and m(x,r)=\inf_{B_r(x)}u
    \end{align*}
    and also
    \begin{align*}
         M(x)=\lim_{r\to 0}M(x,r) \and m(x)=\lim_{r\to 0}m(x,r)
    \end{align*}
    as their monotone limits. For $0<r\leq R/2$, we may apply Lemma \ref{lemHoelder:OscillationDiminish} in $B_{R/2}(x)$ to see that
    \begin{align}
        M(x,r)-m(x,r)=\osc_{B_r(x)}u&\leq C\left(\frac{r}{R}\right)^\alpha\left(\|u\|_{L^\infty(B_{R/2}(x))}+\Tail{u}{x}{R/2}\right) \nonumber \\
        &\leq C\left(\frac{r}{R}\right)^\alpha\left(\|u\|_{L^\infty(B_R(x_0))}+\Tail{u}{x_0}{R}\right) \label{eqHoelder:OscillationDecay}
    \end{align}
    by Lemma \ref{lemPre:TailMonotonicity}. Letting $r\to 0$, we obtain $M(x)=m(x)$. We define the function $\tilde{u}$ by
    \begin{align*}
        \tilde{u}(x)=\begin{cases}
            M(x),\ &x\in B_{R/4}(x_0), \\
            u(x),\ &x\in X\setminus B_{R/4}(x_0)
        \end{cases}
    \end{align*}
    and claim that $\tilde{u}$ is a version of $u$ that is locally Hölder continuous in $B_{R/4}(x_0)$. The first claim follows by Lemma \ref{lemPre:LebesgueDifferentiation}, i.e.
    \begin{align*}
        u(x)=\lim_{r\to 0}\fint_{B_r(x)}u(y)\mu(\dy)\leq\lim_{r\to 0}\sup_{B_r(x)}u=M(x)=\tilde{u}(x)
    \end{align*}
    and similarly, $u(x)\geq m(x)=\tilde{u}(x)$ for $\mu$-almost every $x\in B_{R/4}(x_0)$. Let us rename $\tilde{u}$ back to $u$ and prove the Hölder continuity assertion. Fix $x,y\in B_{R/4}(x_0)$ and assume $r\coloneqq d(x,y)<\frac{R}{4\lambda}$. Then, by \eqref{eqHoelder:OscillationDecay},
    \begin{align*}
        \frac{|u(x)-u(y)|}{d(x,y)^\alpha}\leq\frac{\osc_{B_{2r}(x)}}{r^\alpha}\leq\frac{C}{R^\alpha}\left(\|u\|_{L^\infty(B_R(x_0))}+\Tail{u}{x_0}{R}\right)
    \end{align*} 
    as desired. On the other hand, if $r\geq\frac{R}{4\lambda}$, then
    \begin{align*}
        \frac{|u(x)-u(y)|}{d(x,y)^\alpha}\leq\frac{2\|u\|_{L^\infty(B_{R/4}(x_0))}}{\left(\frac{R}{4\lambda}\right)^\alpha}\leq\frac{C}{R^\alpha}\|u\|_{L^\infty(B_R(x_0))}.
    \end{align*}
    Hence, we found that
    \begin{align*}
        [u]_{C^{0,\alpha}(B_{R/4}(x_0))}\leq\frac{C}{R^\alpha}\left(\|u\|_{L^\infty(B_R(x_0))}+\Tail{u}{x_0}{R}\right).
    \end{align*}
    Since $\mu$ has full support and all of those local Hölder continuous representatives coincide $\mu$-almost everywhere, we get that $u\in C_\loc^{0,\alpha}(\Omega)$.
\end{proof}

\begin{proof}[Proof of Corollary \ref{corPre:Hoelder}]
    Let us first prove that any $u\in L^\infty(X)$ belongs to $L_{s,\nu}^f(X)$, that is, all tails are finite. Let $0<R\leq R_0$ and $x_0\in X$ be arbitrary. Then
    \begin{align*}
        \tail{u}{x_0}{R}&\leq\sup_{x\in B_{R/2}(x_0)}\int_{X\setminus B_R(x_0)}f\left(\frac{\|u\|_{L^\infty(X)}}{R^s}\left(\frac{R}{d(x,y)}\right)^s\right)\frac{\nu(x,\dy)}{d(x,y)^s} \nonumber \\
        &\leq Cf\left(\frac{\|u\|_{L^\infty(X)}}{R^s}\right)R^{s(p-1)}\sup_{x\in B_{R/2}(x_0)}\int_{X\setminus B_{R/2}(x)}\frac{\nu(x,\dy)}{d(x,y)^{sp}} \nonumber \\
        &\leq\frac{C}{R^s}f\left(\frac{\|u\|_{L^\infty(X)}}{R^s}\right)<\infty
    \end{align*}
    by \eqref{eqPre:TailIntegrabilityAssumption} and Lemma \ref{lemPre:SupEstimate}. The result will follow by similar and standard arguments as in the previous proof.
\end{proof}

\printbibliography[heading=bibintoc]

@article{BDNS25,
 author = {Behn, Linus and Diening, Lars and Nowak, Simon and Scharle, Toni},
 title = {The {De} {Giorgi} method for local and nonlocal systems},
 fjournal = {Journal of the London Mathematical Society. Second Series},
 journal = {J. Lond. Math. Soc., II. Ser.},
 issn = {0024-6107},
 volume = {112},
 number = {1},
 eid = {e70237},
 note = {Id/No e70237},
 year = {2025},
 language = {English},
 doi = {10.1112/jlms.70237},
 zbMATH = {8072365},
 Zbl = {1572.35282}
}

@book{HKST15,
 author = {Heinonen, Juha and Koskela, Pekka and Shanmugalingam, Nageswari and Tyson, Jeremy T.},
 title = {Sobolev spaces on metric measure spaces. {An} approach based on upper gradients},
 fseries = {New Mathematical Monographs},
 series = {New Math. Monogr.},
 volume = {27},
 isbn = {978-1-107-09234-1; 978-1-316-13591-4},
 year = {2015},
 publisher = {Cambridge: Cambridge University Press},
 language = {English},
 doi = {10.1017/CBO9781316135914},
 zbMATH = {6397370},
 Zbl = {1332.46001}
}

@article{DKP16,
 author = {Di Castro, Agnese and Kuusi, Tuomo and Palatucci, Giampiero},
 title = {Local behavior of fractional {$p$}-minimizers},
 fjournal = {Annales de l'Institut Henri Poincar{\'e}. Analyse Non Lin{\'e}aire},
 journal = {Ann. Inst. Henri Poincar{\'e}, Anal. Non Lin{\'e}aire},
 issn = {0294-1449},
 volume = {33},
 number = {5},
 pages = {1279--1299},
 year = {2016},
 language = {English},
 doi = {10.1016/j.anihpc.2015.04.003},
 zbMATH = {6640998},
 Zbl = {1355.35192}
}

@article{BKO23,
 author = {Byun, Sun-Sig and Kim, Hyojin and Ok, Jihoon},
 title = {Local {Hölder} continuity for fractional nonlocal equations with general growth},
 fjournal = {Mathematische Annalen},
 journal = {Math. Ann.},
 issn = {0025-5831},
 volume = {387},
 number = {1-2},
 pages = {807--846},
 year = {2023},
 language = {English},
 doi = {10.1007/s00208-022-02472-y},
 zbMATH = {7735164},
 Zbl = {1522.35543}
}

@article{CKW22,
 author = {Chaker, Jamil and Kim, Minhyun and Weidner, Marvin},
 title = {Regularity for nonlocal problems with non-standard growth},
 fjournal = {Calculus of Variations and Partial Differential Equations},
 journal = {Calc. Var. Partial Differ. Equ.},
 issn = {0944-2669},
 volume = {61},
 number = {6},
 eid = {227},
 note = {Id/No 227},
 year = {2022},
 language = {English},
 doi = {10.1007/s00526-022-02364-8},
 zbMATH = {7608958},
 Zbl = {1501.35106}
}

@article{CKW23,
 author = {Chaker, Jamil and Kim, Minhyun and Weidner, Marvin},
 title = {Harnack inequality for nonlocal problems with non-standard growth},
 fjournal = {Mathematische Annalen},
 journal = {Math. Ann.},
 issn = {0025-5831},
 volume = {386},
 number = {1-2},
 pages = {533--550},
 year = {2023},
 language = {English},
 doi = {10.1007/s00208-022-02405-9},
 zbMATH = {7686089},
 Zbl = {1515.35082}
}

@article{Che25,
 author = {Chen, Jingya},
 title = {Hölder and {Harnack} estimates for integro-differential operators with kernels of measure},
 fjournal = {Annali di Matematica Pura ed Applicata. Serie Quarta},
 journal = {Ann. Mat. Pura Appl. (4)},
 issn = {0373-3114},
 volume = {204},
 number = {4},
 pages = {1729--1766},
 year = {2025},
 language = {English},
 doi = {10.1007/s10231-025-01546-3},
 zbMATH = {8071166},
 Zbl = {1570.35085}
}

@article{DK20,
 author = {Dyda, Bart{\l}omiej and Kassmann, Moritz},
 title = {Regularity estimates for elliptic nonlocal operators},
 fjournal = {Analysis \& PDE},
 journal = {Anal. PDE},
 issn = {2157-5045},
 volume = {13},
 number = {2},
 pages = {317--370},
 year = {2020},
 language = {English},
 doi = {10.2140/apde.2020.13.317},
 zbMATH = {7181503},
 Zbl = {1437.35175}
}

@misc{Gio57,
 author = {De Giorgi, Ennio},
 title = {Sulla differenziabilit{\`a} e l'analiticit{\`a} delle estremali degli integrali multipli regolari},
 year = {1957},
 language = {Italian},
 howpublished = {Mem. {Accad}. {Sci}. {Torino}, {P}. {I}., {III}. {Ser}. 3, 25-43.},
 zbMATH = {3138423},
 Zbl = {0084.31901}
}

@article{CS20,
 author = {Chaker, Jamil and Silvestre, Luis},
 title = {Coercivity estimates for integro-differential operators},
 fjournal = {Calculus of Variations and Partial Differential Equations},
 journal = {Calc. Var. Partial Differ. Equ.},
 issn = {0944-2669},
 volume = {59},
 number = {4},
 eid = {106},
 note = {Id/No 106},
 year = {2020},
 language = {English},
 doi = {10.1007/s00526-020-01764-y},
 zbMATH = {7212284},
 Zbl = {1444.47058}
}

@article{HKKT15,
 author = {Heikkinen, Toni and Kinnunen, Juha and Korvenpää, Janne and Tuominen, Heli},
 title = {Regularity of the local fractional maximal function},
 fjournal = {Arkiv för Matematik},
 journal = {Ark. Mat.},
 issn = {0004-2080},
 volume = {53},
 number = {1},
 pages = {127--154},
 year = {2015},
 language = {English},
 doi = {10.1007/s11512-014-0199-2},
 zbMATH = {6424056},
 Zbl = {1316.42019}
}

@book{Giu03,
 author = {Giusti, Enrico},
 title = {Direct methods in the calculus of variations},
 isbn = {981-238-043-4},
 year = {2003},
 publisher = {Singapore: World Scientific},
 language = {English},
 zbMATH = {1815381},
 Zbl = {1028.49001}
}

@book{Hei01,
 author = {Heinonen, Juha},
 title = {Lectures on analysis on metric spaces},
 fseries = {Universitext},
 series = {Universitext},
 issn = {0172-5939},
 isbn = {0-387-95104-0},
 year = {2001},
 publisher = {New York, NY: Springer},
 language = {English},
 doi = {10.1007/978-1-4613-0131-8},
 zbMATH = {1535638},
 Zbl = {0985.46008}
}

@article{Kas09,
 author = {Kassmann, Moritz},
 title = {A priori estimates for integro-differential operators with measurable kernels},
 fjournal = {Calculus of Variations and Partial Differential Equations},
 journal = {Calc. Var. Partial Differ. Equ.},
 issn = {0944-2669},
 volume = {34},
 number = {1},
 pages = {1--21},
 year = {2009},
 language = {English},
 doi = {10.1007/s00526-008-0173-6},
 zbMATH = {5373494},
 Zbl = {1158.35019}
}

@article{DKP14,
 author = {Di Castro, Agnese and Kuusi, Tuomo and Palatucci, Giampiero},
 title = {Nonlocal {Harnack} inequalities},
 fjournal = {Journal of Functional Analysis},
 journal = {J. Funct. Anal.},
 issn = {0022-1236},
 volume = {267},
 number = {6},
 pages = {1807--1836},
 year = {2014},
 language = {English},
 doi = {10.1016/j.jfa.2014.05.023},
 zbMATH = {6330975},
 Zbl = {1302.35082}
}

@article{Now21,
 author = {Nowak, Simon},
 title = {Higher {Hölder} regularity for nonlocal equations with irregular kernel},
 fjournal = {Calculus of Variations and Partial Differential Equations},
 journal = {Calc. Var. Partial Differ. Equ.},
 issn = {0944-2669},
 volume = {60},
 number = {1},
 eid = {24},
 note = {Id/No 24},
 year = {2021},
 language = {English},
 doi = {10.1007/s00526-020-01915-1},
 zbMATH = {7309168},
 Zbl = {1509.35087}
}

@article{BLS18,
 author = {Brasco, Lorenzo and Lindgren, Erik and Schikorra, Armin},
 title = {Higher {Hölder} regularity for the fractional {$p$}-{Laplacian} in the superquadratic case},
 fjournal = {Advances in Mathematics},
 journal = {Adv. Math.},
 issn = {0001-8708},
 volume = {338},
 pages = {782--846},
 year = {2018},
 language = {English},
 doi = {10.1016/j.aim.2018.09.009},
 zbMATH = {6950284},
 Zbl = {1400.35049}
}

@article{GL24,
 author = {Garain, Prashanta and Lindgren, Erik},
 title = {Higher {Hölder} regularity for the fractional {$p$}-{Laplace} equation in the subquadratic case},
 fjournal = {Mathematische Annalen},
 journal = {Math. Ann.},
 issn = {0025-5831},
 volume = {390},
 number = {4},
 pages = {5753--5792},
 year = {2024},
 language = {English},
 doi = {10.1007/s00208-024-02891-z},
 zbMATH = {7940270},
 Zbl = {1555.35079}
}

@article{DP19,
 author = {De Filippis, Cristiana and Palatucci, Giampiero},
 title = {Hölder regularity for nonlocal double phase equations},
 fjournal = {Journal of Differential Equations},
 journal = {J. Differ. Equations},
 issn = {0022-0396},
 volume = {267},
 number = {1},
 pages = {547--586},
 year = {2019},
 language = {English},
 doi = {10.1016/j.jde.2019.01.017},
 zbMATH = {7046606},
 Zbl = {1412.35041}
}

@article{BOS22,
 author = {Byun, Sun-Sig and Ok, Jihoon and Song, Kyeong},
 title = {Hölder regularity for weak solutions to nonlocal double phase problems},
 fjournal = {Journal de Math{\'e}matiques Pures et Appliqu{\'e}es. Neuvi{\`e}me S{\'e}rie},
 journal = {J. Math. Pures Appl. (9)},
 issn = {0021-7824},
 volume = {168},
 pages = {110--142},
 year = {2022},
 language = {English},
 doi = {10.1016/j.matpur.2022.11.001},
 zbMATH = {7625580},
 Zbl = {1504.35104}
}

@article{CK23,
 author = {Chaker, Jamil and Kim, Minhyun},
 title = {Local regularity for {Nonlocal} equations with variable exponents},
 fjournal = {Mathematische Nachrichten},
 journal = {Math. Nachr.},
 issn = {0025-584X},
 volume = {296},
 number = {9},
 pages = {4463--4489},
 year = {2023},
 language = {English},
 doi = {10.1002/mana.202100521},
 zbMATH = {7750723},
 Zbl = {1525.35056}
}

@article{Ok23,
 author = {Ok, Jihoon},
 title = {Local {Hölder} regularity for nonlocal equations with variable powers},
 fjournal = {Calculus of Variations and Partial Differential Equations},
 journal = {Calc. Var. Partial Differ. Equ.},
 issn = {0944-2669},
 volume = {62},
 number = {1},
 eid = {32},
 note = {Id/No 32},
 year = {2023},
 language = {English},
 doi = {10.1007/s00526-022-02353-x},
 zbMATH = {7620718},
 Zbl = {1507.35330}
}

@article{FSV22,
 author = {Fern{\'a}ndez Bonder, Juli{\'a}n and Salort, Ariel and Vivas, Hern{\'a}n},
 title = {Interior and up to the boundary regularity for the fractional {$g$}-{Laplacian}: {The} convex case},
 fjournal = {Nonlinear Analysis. Theory, Methods \& Applications. Series A: Theory and Methods},
 journal = {Nonlinear Anal., Theory Methods Appl., Ser. A, Theory Methods},
 issn = {0362-546X},
 volume = {223},
 eid = {11360},
 note = {Id/No 113060},
 year = {2022},
 language = {English},
 doi = {10.1016/j.na.2022.113060},
 zbMATH = {7564617},
 Zbl = {1497.35274}
}

@article{FS19,
 author = {Fern{\'a}ndez Bonder, Juli{\'a}n and Salort, Ariel M.},
 title = {Fractional order {Orlicz}-{Sobolev} spaces},
 fjournal = {Journal of Functional Analysis},
 journal = {J. Funct. Anal.},
 issn = {0022-1236},
 volume = {277},
 number = {2},
 pages = {333--367},
 year = {2019},
 language = {English},
 doi = {10.1016/j.jfa.2019.04.003},
 zbMATH = {7063142},
 Zbl = {1426.46018}
}

@misc{GJS25,
 author = {Davide Giovagnoli and David Jesus and Luis Silvestre},
 title = {{$C^{1+\alpha}$} regularity for fractional {$p$}-harmonic functions},
 year = {2025},
 eprint = {2509.26565},
 eprinttype = {arXiv},
 eprintclass = {math.AP},
 url = {https://arxiv.org/abs/2509.26565},
 arXiv = {arXiv:2509.26565}
}

@article{BT25,
 author = {Biswas, Anup and Topp, Erwin},
 title = {Lipschitz regularity of fractional {$p$}-{Laplacian}},
 fjournal = {Annals of PDE},
 journal = {Ann. PDE},
 issn = {2524-5317},
 volume = {11},
 number = {2},
 eid = {27},
 note = {Id/No 27},
 year = {2025},
 language = {English},
 doi = {10.1007/s40818-025-00220-4},
 zbMATH = {8105425},
 Zbl = {1575.35034}
}

@article{BL02,
 author = {Bass, Richard F. and Levin, David A.},
 title = {Transition probabilities for symmetric jump processes},
 fjournal = {Transactions of the American Mathematical Society},
 journal = {Trans. Am. Math. Soc.},
 issn = {0002-9947},
 volume = {354},
 number = {7},
 pages = {2933--2953},
 year = {2002},
 language = {English},
 doi = {10.1090/S0002-9947-02-02998-7},
 zbMATH = {1725362},
 Zbl = {0993.60070}
}

@article{CK03,
 author = {Chen, Zhen-Qing and Kumagai, Takashi},
 title = {Heat kernel estimates for stable-like processes on {$d$}-sets.},
 fjournal = {Stochastic Processes and their Applications},
 journal = {Stochastic Processes Appl.},
 issn = {0304-4149},
 volume = {108},
 number = {1},
 pages = {27--62},
 year = {2003},
 language = {English},
 doi = {10.1016/S0304-4149(03)00105-4},
 zbMATH = {2233794},
 Zbl = {1075.60556}
}

@article{BGK09,
 author = {Barlow, Martin T. and Grigor'yan, Alexander and Kumagai, Takashi},
 title = {Heat kernel upper bounds for jump processes and the first exit time},
 fjournal = {Journal f{\"u}r die Reine und Angewandte Mathematik},
 journal = {J. Reine Angew. Math.},
 issn = {0075-4102},
 volume = {626},
 pages = {135--157},
 year = {2009},
 language = {English},
 doi = {10.1515/CRELLE.2009.005},
 zbMATH = {5505183},
 Zbl = {1158.60039}
}

@article{GH08,
 author = {Grigor'yan, Alexander and Hu, Jiaxin},
 title = {Off-diagonal upper estimates for the heat kernel of the {Dirichlet} forms on metric spaces},
 fjournal = {Inventiones Mathematicae},
 journal = {Invent. Math.},
 issn = {0020-9910},
 volume = {174},
 number = {1},
 pages = {81--126},
 year = {2008},
 language = {English},
 doi = {10.1007/s00222-008-0135-9},
 zbMATH = {5343983},
 Zbl = {1154.47034}
}

@article{CKKW18,
 author = {Chen, Zhen-Qing and Kim, Panki and Kumagai, Takashi and Wang, Jian},
 title = {Heat kernel estimates for time fractional equations},
 fjournal = {Forum Mathematicum},
 journal = {Forum Math.},
 issn = {0933-7741},
 volume = {30},
 number = {5},
 pages = {1163--1192},
 year = {2018},
 language = {English},
 doi = {10.1515/forum-2017-0192},
 zbMATH = {6947017},
 Zbl = {1401.60088}
}

@article{GHL14,
 author = {Grigor'yan, Alexander and Hu, Jiaxin and Lau, Ka-Sing},
 title = {Estimates of heat kernels for non-local regular {Dirichlet} forms},
 fjournal = {Transactions of the American Mathematical Society},
 journal = {Trans. Am. Math. Soc.},
 issn = {0002-9947},
 volume = {366},
 number = {12},
 pages = {6397--6441},
 year = {2014},
 language = {English},
 doi = {10.1090/S0002-9947-2014-06034-0},
 zbMATH = {6371211},
 Zbl = {1304.47058}
}

@article{Coz17,
 author = {Cozzi, Matteo},
 title = {Regularity results and {Harnack} inequalities for minimizers and solutions of nonlocal problems: a unified approach via fractional {De} {Giorgi} classes},
 fjournal = {Journal of Functional Analysis},
 journal = {J. Funct. Anal.},
 issn = {0022-1236},
 volume = {272},
 number = {11},
 pages = {4762--4837},
 year = {2017},
 language = {English},
 doi = {10.1016/j.jfa.2017.02.016},
 zbMATH = {6710259},
 Zbl = {1366.49040}
}

@article{ACPS21,
 author = {Alberico, Angela and Cianchi, Andrea and Pick, Lubo{\v{s}} and Slav{\'{\i}}kov{\'a}, Lenka},
 title = {On fractional {Orlicz}-{Sobolev} spaces},
 fjournal = {Analysis and Mathematical Physics},
 journal = {Anal. Math. Phys.},
 issn = {1664-2368},
 volume = {11},
 number = {2},
 eid = {84},
 note = {Id/No 84},
 year = {2021},
 language = {English},
 doi = {10.1007/s13324-021-00511-6},
 zbMATH = {7355860},
 Zbl = {1477.46039}
}

@book{HH19,
 author = {Harjulehto, Petteri and Hästö, Peter},
 title = {Orlicz spaces and generalized {Orlicz} spaces},
 fseries = {Lecture Notes in Mathematics},
 series = {Lect. Notes Math.},
 issn = {0075-8434},
 volume = {2236},
 isbn = {978-3-030-15099-0; 978-3-030-15100-3},
 year = {2019},
 publisher = {Cham: Springer},
 language = {English},
 doi = {10.1007/978-3-030-15100-3},
 zbMATH = {7045570},
 Zbl = {1436.46002}
}

@book{LU68,
 author = {Ladyzhenskaya, O. A. and Ural'tseva, N. N.},
 title = {Linear and quasilinear elliptic equations},
 fseries = {Mathematics in Science and Engineering},
 series = {Math. Sci. Eng.},
 volume = {46},
 year = {1968},
 publisher = {Elsevier, Amsterdam},
 language = {English},
 zbMATH = {3262653},
 Zbl = {0164.13002}
}

@article{GG82,
 author = {Giaquinta, Mariano and Giusti, Enrico},
 title = {On the regularity of the minima of variational integrals},
 fjournal = {Acta Mathematica},
 journal = {Acta Math.},
 issn = {0001-5962},
 volume = {148},
 pages = {31--46},
 year = {1982},
 language = {English},
 doi = {10.1007/BF02392725},
 zbMATH = {3778229},
 Zbl = {0494.49031}
}

@article{Lie91,
 author = {Lieberman, Gary M.},
 title = {The natural generalization of the natural conditions of {Ladyzhenskaya} and {Ural}'tseva for elliptic equations},
 fjournal = {Communications in Partial Differential Equations},
 journal = {Commun. Partial Differ. Equations},
 issn = {0360-5302},
 volume = {16},
 number = {2-3},
 pages = {311--361},
 year = {1991},
 language = {English},
 doi = {10.1080/03605309108820761},
 zbMATH = {10065},
 Zbl = {0742.35028}
}

@article{Mar89,
 author = {Marcellini, Paolo},
 title = {Regularity of minimizers of integrals of the calculus of variations with non-standard growth conditions},
 fjournal = {Archive for Rational Mechanics and Analysis},
 journal = {Arch. Ration. Mech. Anal.},
 issn = {0003-9527},
 volume = {105},
 number = {3},
 pages = {267--284},
 year = {1989},
 language = {English},
 doi = {10.1007/BF00251503},
 zbMATH = {4092224},
 Zbl = {0667.49032}
}

@article{KS01,
 author = {Kinnunen, Juha and Shanmugalingam, Nageswari},
 title = {Regularity of quasi-minimizers on metric spaces},
 fjournal = {Manuscripta Mathematica},
 journal = {Manuscr. Math.},
 issn = {0025-2611},
 volume = {105},
 number = {3},
 pages = {401--423},
 year = {2001},
 language = {English},
 doi = {10.1007/s002290100193},
 zbMATH = {1690802},
 Zbl = {1006.49027}
}

@article{CM15,
 author = {Colombo, Maria and Mingione, Giuseppe},
 title = {Regularity for double phase variational problems},
 fjournal = {Archive for Rational Mechanics and Analysis},
 journal = {Arch. Ration. Mech. Anal.},
 issn = {0003-9527},
 volume = {215},
 number = {2},
 pages = {443--496},
 year = {2015},
 language = {English},
 doi = {10.1007/s00205-014-0785-2},
 url = {infoscience.epfl.ch/handle/20.500.14299/165511},
 zbMATH = {6399641},
 Zbl = {1322.49065}
}

@article{Ok20,
 author = {Ok, Jihoon},
 title = {Regularity for double phase problems under additional integrability assumptions},
 fjournal = {Nonlinear Analysis. Theory, Methods \& Applications. Series A: Theory and Methods},
 journal = {Nonlinear Anal., Theory Methods Appl., Ser. A, Theory Methods},
 issn = {0362-546X},
 volume = {194},
 eid = {111408},
 note = {Id/No 111408},
 year = {2020},
 language = {English},
 doi = {10.1016/j.na.2018.12.019},
 zbMATH = {7184500},
 Zbl = {1437.49052}
}

@article{MN91,
 author = {Moscariello, Gioconda and Nania, Luciana},
 title = {Hölder continuity of minimizers of functionals with non standard growth conditions},
 fjournal = {Ricerche di Matematica},
 journal = {Ric. Mat.},
 issn = {0035-5038},
 volume = {40},
 number = {2},
 pages = {259--273},
 year = {1991},
 language = {English},
 zbMATH = {96522},
 Zbl = {0773.49019}
}

@article{AM01,
 author = {Acerbi, Emilio and Mingione, Giuseppe},
 title = {Regularity results for a class of functionals with non-standard growth},
 fjournal = {Archive for Rational Mechanics and Analysis},
 journal = {Arch. Ration. Mech. Anal.},
 issn = {0003-9527},
 volume = {156},
 number = {2},
 pages = {121--140},
 year = {2001},
 language = {English},
 doi = {10.1007/s002050100117},
 zbMATH = {1586360},
 Zbl = {0984.49020}
}

@article{RS16,
 author = {Ros-Oton, Xavier and Serra, Joaquim},
 title = {Regularity theory for general stable operators},
 fjournal = {Journal of Differential Equations},
 journal = {J. Differ. Equations},
 issn = {0022-0396},
 volume = {260},
 number = {12},
 pages = {8675--8715},
 year = {2016},
 language = {English},
 doi = {10.1016/j.jde.2016.02.033},
 zbMATH = {6568053},
 Zbl = {1346.35220}
}

@article{BKS19,
 author = {Bux, Kai-Uwe and Kassmann, Moritz and Schulze, Tim},
 title = {Quadratic forms and {Sobolev} spaces of fractional order},
 fjournal = {Proceedings of the London Mathematical Society. Third Series},
 journal = {Proc. Lond. Math. Soc. (3)},
 issn = {0024-6115},
 volume = {119},
 number = {3},
 pages = {841--866},
 year = {2019},
 language = {English},
 doi = {10.1112/plms.12246},
 zbMATH = {7119250}
}

@article{IS20,
 author = {Imbert, Cyril and Silvestre, Luis},
 title = {The weak {Harnack} inequality for the {Boltzmann} equation without cut-off},
 fjournal = {Journal of the European Mathematical Society (JEMS)},
 journal = {J. Eur. Math. Soc. (JEMS)},
 issn = {1435-9855},
 volume = {22},
 number = {2},
 pages = {507--592},
 year = {2020},
 language = {English},
 doi = {10.4171/JEMS/928},
 zbMATH = {7174139},
 Zbl = {1473.35077}
}

@phdthesis{Hep26,
  author = {Hepp, Solveig},
  school = {Bielefeld University},
  title = {Quadratic forms with space-dependent singular measures},
  year = {2026}
}

@article{BS05,
 author = {Bogdan, Krzysztof and Sztonyk, Pawe{\l}},
 title = {Harnack's inequality for stable {L{\'e}vy} processes},
 fjournal = {Potential Analysis},
 journal = {Potential Anal.},
 issn = {0926-2601},
 volume = {22},
 number = {2},
 pages = {133--150},
 year = {2005},
 language = {English},
 doi = {10.1007/s11118-004-0590-x},
 zbMATH = {2162535},
 Zbl = {1081.60055}
}

@article{CKKSS25,
 author = {Capogna, Luca and Kline, Josh and Korte, Riikka and Shanmugalingam, Nageswari and Snipes, Marie},
 title = {Neumann problems for {{\(p\)}}-harmonic functions, and induced nonlocal operators in metric measure spaces},
 fjournal = {American Journal of Mathematics},
 journal = {Am. J. Math.},
 issn = {0002-9327},
 volume = {147},
 number = {6},
 pages = {1653--1711},
 year = {2025},
 language = {English},
 url = {muse.jhu.edu/pub/1/article/975705/summary},
 zbMATH = {8131998}
}

@misc{CGKS26,
 author = {Luca Capogna and Ryan Gibara and Riikka Korte and Nageswari Shanmugalingam},
 title = {Fractional {$p$}-{Laplacians} via {Neumann} problems in unbounded metric measure spaces},
 year = {2026},
 url = {https://arxiv.org/abs/2410.18883},
 eprint = {2410.18883},
 eprinttype = {arXiv},
 eprintclass = {math.AP},
}

@article{CS07,
 author = {Caffarelli, Luis and Silvestre, Luis},
 title = {An extension problem related to the fractional {Laplacian}},
 fjournal = {Communications in Partial Differential Equations},
 journal = {Commun. Partial Differ. Equations},
 issn = {0360-5302},
 volume = {32},
 number = {8},
 pages = {1245--1260},
 year = {2007},
 language = {English},
 doi = {10.1080/03605300600987306},
 zbMATH = {5204436},
 Zbl = {1143.26002}
}

@misc{Cho24,
 author = {Soobin Cho},
 title = {Robust estimates for elliptic nonlocal operators on doubling spaces},
 year = {2024},
 url = {https://arxiv.org/abs/2403.16853},
 eprint = {2403.16853.4242},
 eprinttype = {arXiv},
 eprintclass = {math.AP},
}

@book{ADM11,
 author = {Ambrosio, Luigi and Da Prato, Giuseppe and Mennucci, Andrea},
 title = {Introduction to measure theory and integration},
 fseries = {Appunti. Scuola Normale Superiore di Pisa (Nuova Serie)},
 series = {Appunti, Sc. Norm. Super. Pisa (N.S.)},
 issn = {2532-991X},
 volume = {10},
 isbn = {978-88-7642-385-7},
 year = {2011},
 publisher = {Pisa: Edizioni della Normale},
 language = {English},
 zbMATH = {5943675},
 Zbl = {1264.28001}
}

\end{document}